\newcommand\blfootnote[1]{%
  \begingroup
  \renewcommand\thefootnote{}\footnote{#1}%
  \addtocounter{footnote}{-1}%
  \endgroup
}
\newtheorem{theorem}{{Theorem}}[section]
\newtheorem{proposition}[theorem]{{Proposition}}
\newtheorem{lemma}[theorem]{{Lemma}}
\newtheorem{corollary}[theorem]{{Corollary}}
\newtheorem{fact}[theorem]{{Fact}}
\newtheorem{remark}[theorem]{{Remark}}
\newtheorem{question}[theorem]{{Question}}
\newtheorem{observation}[theorem]{{Observation}}
\newtheorem{problem}[theorem]{{Problem}}
\newtheorem{example}[theorem]{{Example}}
\definecolor{greenbf}{rgb}{0.3, 0.8 ,0.7}
\def\R{\mathbb{R}}
\def\Z{\mathbb{Z}}
\def\S{\mathbb{S}}
\def\N{\mathbb{N}}
\def\F{\mathcal{F}}
\def\Isom{{\sf{Isom}}}
\def\Conf{{\sf{Conf}}}
\def\Aff{{\mathrm{Aff}}}
\def\Sim{\sf{Sim}}
\def\Aut{{\sf{Aut}}}
\def\Der{\sf{Der}}
\def\heis{{\mathfrak{heis}} }
\def\Aut{{\sf{Aut}}}
\def\Heis{{\sf{Heis}}}
\def\GL{{\mathrm{GL}}}
\def\O{{\sf{O}}}
\def\SO{{\sf{SO}}}
\def\SL{{\sf{SL}}}
\def\Mink{{\sf{Mink}} }
\def\Id{{\sf{Id}} }
\def\k{{\mathfrak{k}}}
\def\a{{\mathfrak{a}}}
\def\n{{\mathfrak{n}}}
\def\z{{\mathfrak{z}}}
\def\z{{\mathfrak{z}}}
\newcommand{\ad}{\mathrm{ad}}
\newcommand{\Ad}{\mathrm{Ad}}
\def\Heis{{\mathsf{Heis}}}
\begin{document}

 \title{Conformal quotients of plane waves, and Lichnerowicz conjecture in a locally homogeneous setting}
 
	\author [L. Mehidi]{Lilia Mehidi}
	\address{Departamento de Geometria y Topologia\hfill\break\indent
Facultad de Ciencias, Universidad de Granada, Spain\\}
	\email{lilia.mehidi@ugr.es
    \hfill\break\indent
	\url{https://mehidi.pages.math.cnrs.fr/siteweb/}}

\date{\today}
\maketitle   
\begin{abstract}
In the first part of the paper, we study conformal groups that act properly discontinuously and cocompactly on simply connected, non-flat homogeneous plane waves. We show that proper cocompact similarity actions that are not isometric can occur, in contrast to the behavior of Riemannian and Lorentzian affine similarity actions.
In the second part, we consider the Lorentzian conformal Lichnerowicz conjecture, which states that if the conformal group of a compact Lorentzian manifold acts without preserving any metric in the conformal class, then the manifold must be conformally flat. We prove the conjecture in a locally homogeneous setting.
\end{abstract}

\tableofcontents

\section{Introduction}

Two metrics $g$ and $g'$  on a manifold $M$  are conformally equivalent if $g'=e^f g$ for some smooth function $f$ on $M$. A conformal structure on $M$ is an equivalence class $[g]$ of a pseudo-Riemannian metric on $M$, making $(M,[g])$ a conformal manifold. The conformal group $\Conf(M,g)$ consists of diffeomorphisms preserving the conformal structure $[g]$. 
Within this group, transformations satisfying $\Phi^* g=cg$ for some constant $c>0$ are called \textbf{similarities}. 

\subsection{Properly discontinuous actions of similarity groups}
A similarity structure on a manifold $M$ consists of a maximal atlas whose charts are equipped with a pseudo-Riemannian metric, with transition maps given by similarity transformations. Of particular interest are similarity manifolds modeled on $(G,X)$-structures (in the sense of Ehresmann-Thurston), where $X$ is a smooth manifold with a faithful and transitive action of a finite-dimensional Lie group $G$. They are called $(G,X)$-manifolds. Here, $X$ is a pseudo-Riemannian space, and $G$ is the group of similarity transformations. If a subgroup $\Gamma \subset G$ acts properly discontinuously on $X$, then the quotient $\Gamma \backslash X$ is a manifold, called a quotient of $X$. These quotients are precisely the ``complete'' $(G,X)$-manifolds.

Remarkable similarity structures consist of  $(G,X)$-structures, where $X=\Mink^{p+q}$ is (the flat) pseudo-Riemannian Minkowski space
$\Mink^{p,q} =(\R^{p+q}, -dx^2_1 - \dots - dx^2_{p} + dy_1^2 + \dots + dy^2_{q})$,
and $G$ is its similarity group.   For $p=0$, the similarity group of the Euclidean space $\R^q$  is the Euclidean group  extended by homotheties $\Sim(\R^q)=(\R \times \O(q)) \ltimes \R^q$. Similarly, the similarity group of the pseudo-Riemannian Minkowski space is $\Sim(\Mink^{p,q})=(\R \times \O(p,q)) \ltimes \R^{p+q}$. These are special affine structures.
The study of compact affine manifolds, i.e. modeled on $(\Aff(\R^n),\R^n)$, has a rich history, with many open questions and conjectures. The case of flat similarity structures represents an important subcase.

The first examples of similarity manifolds modeled on $(\Sim(\R^n), \R^n)$ are Euclidean space forms, namely quotients of $\R^n$ by a discrete subgroup of (affine) isometries of $\R^n$. It turns out that those quotients correspond exactly to the complete similarity structures: a subgroup of $\Sim(\R^n)$ acting properly discontinuously on $\R^n$ is necessarily isometric. Kamishima \cite{kamishima_similarity} extended this  to compact Lorentzian similarity manifolds modeled on $(\Sim(\Mink^{1,n}), \Mink^{1,n})$. It is known from \cite{goldman_fundamental_group} that the fundamental group of a compact flat Lorentzian manifold is virtually solvable. Using similar techniques, Kamishima showed that the fundamental group of a compact Lorentzian similarity manifold is also virtually solvable. Using this, he  obtains that any subgroup of $\Sim(\Mink^{1,n})$ acting properly discontinuously and cocompactly on $\Mink^{1,n}$ must be isometric. 

\subsection{The case of homogeneous plane waves}
Now, we change the model and consider a non-flat $1$-connected homogeneous plane wave.  
Such a space is a Lorentzian manifold with a lightlike parallel vector field $V$ which can be seen as a deformation as well as a generalization of the Lorentzian Minkowski space. Since every $1$-connected homogeneous plane wave is conformal to a complete one, we take as a model space $(X,V)$, a non-flat $1$-connected complete homogeneous plane wave. 
It is shown in \cite[Section 6]{article1} that it has global coordinates, known as Brinkmann coordinates, in which the metric, in dimension $n+2$, takes the form
\begin{align}\label{Introduction: Eq Brinkmann coordinates}
    (\R^{n+2},\, 2 du dv + S_{ij}(u) x^i x^j du^2 + \sum_{i=1}^{n} (dx^i)^2),
\end{align}
where $S(u)=(S_{ij}(u))$ is a symmetric matrix with particular forms. When $S=0$, this corresponds to the usual Lorentzian scalar product of Minkowski space.   

Plane waves are interesting objects from the point of view of conformal geometry, as they are non-compact Lorentzian manifolds with an essential conformal group. They become even more interesting after the following dichotomy, shown recently in \cite{Alekseevsky2024}:  a conformally homogeneous $1$-connected manifold whose conformal group is essential is either conformally flat, or conformal to a (complete) homogeneous plane wave (see next paragraph, where these aspects are discussed). \medskip

The Killing fields of homogeneous plane waves are studied in \cite{Blau}, and the connected component of the isometry group is explicitly given in \cite{article1}, where  further global aspects of these spaces are considered.
In \cite{conformal_Killing_PW}, it was shown   that the Lie algebra of conformal Killing  fields of a homogeneous plane wave is a one-dimensional extension of the Lie algebra of Killing fields, consisting of homothetic vector fields. This result was extended in \cite{Alekseevsky2024} to  conformal groups of $1$-connected homogeneous plane waves, showing that they act by similarities.
Consequently, any compact quotient $M:=\Gamma \backslash X$ inherits a similarity structure. \medskip

The global metrics on $\R^{n+2}$ of the previous form, with $S$  constant with respect to $u$ and non-degenerate, define Cahen-Wallach spaces of dimension $n+2$. These spaces are indecomposable (globally) symmetric plane waves, introduced by Cahen and Wallach \cite{cahen1970lorentzian}. Leistner and Teisseire investigated the conformal compact quotients of Cahen-Wallach spaces of  ``imaginary type'', i.e. for which $S$ in (\ref{Introduction: Eq Brinkmann coordinates}) is negative definite. 
They proved that the groups acting properly discontinuously and cocompactly on these spaces are isometric, aligning with the previously mentioned results on similarity (affine) manifolds. 
The authors asked  whether such actions could exist without being isometric in the general Cahen-Wallach case. Motivated by this, and adopting a unifying approach, we investigate the subgroups of the conformal groups of $1$-connected (non-flat) homogeneous  plane waves acting properly discontinuously and cocompactly.   

It turns out that the more general setting of plane waves allows for strikingly different behavior. We show the following
\begin{theorem}
The groups acting properly discontinuously and cocompactly on $1$-connected complete homogeneous plane waves are either isometric or an extension of an isometric subgroup by a cyclic group generated by a (homothetic) similarity transformation.    
\end{theorem}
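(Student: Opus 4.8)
The plan is to read off the structure of $\Gamma$ from a single homomorphism, the logarithmic homothety factor, and to reduce the whole statement to showing that this homomorphism has discrete image. By the results quoted above, the conformal group $G=\Conf(X)$ acts by similarities, so there is a homomorphism $\lambda\colon G\to(\R,+)$ defined by $\Phi^{*}g=e^{\lambda(\Phi)}g$, whose kernel is exactly $\Isom(X)$. For a group $\Gamma\subset G$ acting properly discontinuously and cocompactly, put $\Gamma_{0}=\Gamma\cap\Isom(X)=\ker(\lambda|_{\Gamma})$, an isometric normal subgroup, and let $\Lambda=\lambda(\Gamma)$. Being a subgroup of $\R$, $\Lambda$ is either trivial, infinite cyclic, or dense. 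If $\Lambda=\{0\}$ then $\Gamma=\Gamma_{0}$ is isometric. If $\Lambda\cong\Z$, then from the short exact sequence $1\to\Gamma_{0}\to\Gamma\to\Z\to 1$, which splits because $\Z$ is free, one gets $\Gamma=\Gamma_{0}\rtimes\langle\gamma_{0}\rangle$ with $\lambda(\gamma_{0})\neq 0$; that is, $\Gamma$ is the extension of the isometric subgroup $\Gamma_{0}$ by the infinite cyclic group generated by a genuine homothetic similarity, exactly as asserted. Thus the entire theorem reduces to excluding the possibility that $\Lambda$ is dense.

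To rule out a dense $\Lambda$, the strategy is to contradict proper discontinuity by manufacturing infinitely many distinct elements of $\Gamma$ that all meet one fixed compact set. Choosing $\gamma_{n}\in\Gamma$ with $\lambda(\gamma_{n})\to 0$ and pairwise distinct (possible by density), each $\gamma_{n}$ is a similarity with factor $c_{n}\to 1$. In the Riemannian world such a map has a fixed point near which it displaces points arbitrarily little, and placing these near-fixed points inside a fixed compact fundamental region---using cocompactness to conjugate each $\gamma_{n}$ back into a fixed compact set $K$---would force $\gamma_{n}K\cap K\neq\varnothing$ for infinitely many distinct $\gamma_{n}$, the desired contradiction. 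The genuinely Lorentzian difficulty, and the step I expect to be the main obstacle, is that a Lorentzian similarity need not have any fixed point: it may translate along the parallel null line, equivalently act nontrivially on the Brinkmann coordinate $u$. Controlling this translation is the crux.

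To control it, I would use that the parallel null line field, and hence the coordinate $u$ up to an affine change, is conformally canonical, so $G$ acts on the $u$--line by affine maps $u\mapsto\alpha(\gamma)u+\beta(\gamma)$ with $\alpha\colon G\to\R^{+}$ a homomorphism. Matching the profile $S(u)$ in the Brinkmann form under a similarity ties $\alpha$ to the homothety factor: for the non-flat homogeneous profiles one expects either $\alpha\equiv 1$ (as in the Cahen--Wallach case, where homotheties fix $u$) or $\log\alpha$ a fixed nonzero multiple of $\lambda$. In the second case a dense $\Lambda$ makes the scaling factors $\alpha(\Gamma)$ dense in $\R^{+}$, so the induced affine action on the $u$--line accumulates and cannot be properly discontinuous; I would then transfer this failure back to $X$ using cocompactness. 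In the first case the elements act on $u$ by translations $\beta$, a second homomorphism, and I would study $(\lambda,\beta)\colon\Gamma\to\R^{2}$, whose kernel consists of slice-preserving isometries, and argue that cocompactness forces its image to be discrete---so $\Lambda$ cannot be dense.

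In summary, the easy part is the trichotomy for $\Lambda$ together with the splitting of the cyclic extension; the whole weight of the theorem sits in the last case, where one must combine the fixed-point/accumulation contradiction with the precise description of the profile $S(u)$ and of the isometry group of a homogeneous plane wave in order to pin down the interaction between the homothety factor and translation along the null line, and thereby forbid homothety factors accumulating at $1$.
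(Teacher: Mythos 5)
Your reduction is sound as far as it goes: the trichotomy for $\Lambda=\lambda(\Gamma)\subset\R$, the splitting of the extension when $\Lambda\cong\Z$, and the observation that the entire theorem lives in excluding a dense $\Lambda$ all match the skeleton of the paper, which likewise classifies the possible closures of the projection of $\Gamma$ to $\R_{\bf H}\times\R_{\bf L}\cong\R^2$ (Propositions \ref{Proposition: p(Gamma)=RxR}--\ref{Proposition: p(Gamma)=Z}). But the dense case is where all the content is, and your treatment of it contains two claims that are actually false, not merely unproved. First, cocompactness does \emph{not} force the image of $(\lambda,\beta)\colon\Gamma\to\R^2$ to be discrete: Proposition \ref{Proposition: p(Gamma)=R} of the paper exhibits properly discontinuous cocompact groups whose image is dense in the line $\R_{\bf L}$ (these are the isometric ones with dense $u$-translation part). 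What is true, and what must be proved, is the sharper statement that the closure of the image can be non-discrete only when it is contained in $\R_{\bf L}$. Second, even if you had discreteness of the image in $\R^2$, that would not give non-density of $\Lambda$: a lattice of $\R^2$ in general position (the case $\overline{p(\Gamma)}\cong\Z^2$) projects densely to the $\R_{\bf H}$-factor, so your final inference ``image discrete $\Rightarrow$ $\Lambda$ not dense'' is a non sequitur. The paper needs a separate argument for exactly this configuration (Proposition \ref{Proposition: p(Gamma)=RxZ ou ZxZ}), comparing the cohomological dimension of $\Gamma$ (which is $2$ if $\Gamma\cong\Z^2$) with $\dim X=n+2$.

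The accumulation/near-fixed-point mechanism you propose also does not close the argument on its own: a similarity with factor $c_n\to1$ can still translate arbitrarily far along $u$ and in the Heisenberg directions, so you cannot conjugate it into a fixed compact set just from control of the conformal factor, and in Lorentzian signature there is no displacement estimate to exploit. The paper's actual exclusion arguments are Lie-theoretic rather than dynamical: when a projection is dense in a one- or two-parameter subgroup, a syndetic-hull lemma (Lemma \ref{Lemma appendix}) places $\Gamma$ as a lattice in a connected nilpotent subgroup $N$ of $G$, and one derives a contradiction from the structure of nilpotent subgroups of $(\R_{\bf H}\times\R_{\bf L}\times K)\ltimes\Heis$ (a one-parameter group with nontrivial $\R_{\bf H}$-part acts on $N\cap\Heis$ both nilpotently and with eigenvalues off the unit circle, which is impossible), again combined with the cohomological-dimension count $\mathsf{cd}(\Gamma)=\dim(N/C)=n+2$. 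The only genuinely dynamical ingredient is a contraction argument (Corollary \ref{Corollary: sequence converging to 1}): if some $\gamma\in\Gamma$ has contracting adjoint action on $\heis$, then $\Gamma\cap(K\ltimes\Heis)$ is trivial --- which makes $\Gamma$ too small to be cocompact, not too big to be proper. To repair your proof you would need to supply these structural arguments for each of the dense configurations; the sketch as written would not survive the isometric dense-in-$\R_{\bf L}$ examples or the $\Z^2$ case.
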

We also construct examples of such groups that are not contained in the isometry group, highlighting the richer and more flexible structure of proper similarity group actions in this context.  

\subsection{On the Lichnerowicz conjecture}
The Riemannian sphere $\S^n$  is the only compact Riemannian manifold with a non-compact conformal group, as conjectured by Lichnerowicz in 1964 and later confirmed by M. Obata and J. Ferrand \cite{obata, ferrand}. 
In the Riemannian setting, a non-compact conformal group means that the group does not preserve a Riemannian metric, as the isometry group of any compact Riemannian manifold must be compact. We say that the group action is ``essential''. 

The concept of essentiality extends to any Lie group $G$ that preserves some geometric structure: the action of $G$ is said to be ``non-essential'' if it preserves a ``stronger'' geometric structure. 
For a pseudo-Riemannian manifold $(M,g)$, the conformal group is said to be essential if it does not preserve any pseudo-Riemannian metric on $M$. In fact, if the conformal group preserves a volume form, then it preserves a metric in the conformal class. Hence, essentiality means that $\Conf(M,[g])$ does not preserve any metric in the conformal class.
However, unlike in the Riemannian case, essentiality in non-Riemannian signatures does not amount to the non-compactness of the conformal group. 
 
An analogous question was formulated  in the pseudo-Riemannian setting \cite{DG},  under the name of the pseudo-Riemannian Lichnerowicz Conjecture. It is asked whether a compact pseudo-Riemannian manifold  $(M,g)$ of dimension $n \geq 3$ with an essential conformal group is conformally flat.

However, the conjecture fails in pseudo-Riemannian settings with index greater than $1$. Indeed, Frances \cite{frances_pseudo-Lichnerowicz} constructed a non-conformally flat metric on $\S^1 \times \S^n$ of index $(p,q)$, for any $p, q \geq 2$, whose conformal group is essential. It turns out that these examples are quotients of higher-index symmetric plane waves; in particular, they are locally homogeneous.
In the (conformally) homogeneous setting, a recent work by Belraouti, Deffaf, Raffed, and Zeghib \cite{belraouti} shows that the conjecture holds, under certain assumptions on the structure of the conformal group.
The Lorentzian case remains an open problem, with many works and recent developments. Melnick and Pecastaing \cite{melnick_pecastaing} proved that the conformal group of a compact, $1$-connected, real-analytic Lorentzian manifold is compact, hence non-essential. In dimension $3$, Frances and Melnick \cite{frances_melnick} proved the conjecture for real-analytic Lorentzian manifolds without the $1$-connectedness assumption. 

\subsubsection{\textbf{Lichnerowicz conjecture in the case of $(G,X)$-manifolds}} 
While a complete resolution of the Lorentzian Lichnerowicz conjecture seems challenging, a promising case arises when the manifold is locally homogeneous. This is particularly interesting given that Frances' counterexamples, mentioned earlier, for index greater than $1$ fall within this class.

Our attempt is to consider this question in the context of compact $(G,X)$-manifolds. Let $X$ be a homogeneous manifold of a Lie group $G$, such that $G$ acts by preserving a conformal Lorentzian structure on $X$. A $(G,X)$-manifold $M$ inherits a conformal structure locally isomorphic to that of $X$. It is in particular locally homogeneous. If we assume that the conformal group of this structure is essential on $M$, then $G$ must be essential on $X$. We ask the following

\begin{question}\label{Question (G,X)-essentiality}
Let $X$ be a homogeneous manifold of a Lie group $G$, such that $G$ acts by preserving a conformal Lorentzian structure on $X$. 
Is a $(G,X)$-compact manifold $M$ essential if and only if $X$ is conformally flat ?    
\end{question}

\subsubsection{\textbf{Reduction to the case of homogeneous plane waves}}
Homogeneous plane waves play a fundamental role in this context. In a recent work, Alekseevsky and Galaev \cite{Alekseevsky2024}  proved that a $1$-connected conformally homogeneous manifold with an essential conformal group is either conformally flat or conformal to a (complete) homogeneous plane wave. Thus, in the study of Question \ref{Question (G,X)-essentiality}, it is sufficient to consider $X$ as a $1$-connected (non-flat) homogeneous  plane wave, with $G$ its conformal group. Henceforth, we assume that $X$ is a $1$-connected (non-flat) homogeneous  plane wave.  This paper examines the question in the case of complete $(G,X)$-models, i.e. when $M$ is a conformal compact quotient of $X$. This leads to the study of groups acting properly discontinuously and cocompactly on $X$.

We obtain the following
\begin{theorem}
Let $M$ be a conformal compact quotient of a $1$-connected non-flat homogeneous  plane wave. Then, the action of the conformal group of $M$ is non-essential.     
\end{theorem}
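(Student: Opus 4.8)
The plan is to pass to the universal cover and reduce the statement to the construction of a single conformal rescaling of the plane-wave metric that is invariant under the whole normalizer of the deck group. Write $X$ for the $1$-connected non-flat complete homogeneous plane wave, $g$ for its metric, and $V=\partial_v$ for the (up to scale unique) parallel lightlike field. Since Alekseevsky and Galaev show that $\Conf(X)$ acts by similarities, there is a similarity character $\chi:\Conf(X)\to\R_{>0}$ with $\phi^*g=\chi(\phi)\,g$ and $\ker\chi=\Isom(X)$. As $M=\Gamma\backslash X$ is a complete $(\Conf(X),X)$-manifold with $X$ simply connected, one has $\Conf(M)\cong N/\Gamma$ where $N:=N_{\Conf(X)}(\Gamma)$. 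Thus it suffices to produce a smooth $f:X\to\R$ with $\hat g:=e^{f}g$ invariant under $N$, i.e.
\begin{equation}\label{cocycle}
 f\circ\phi-f=-\log\chi(\phi)\qquad\text{for all }\phi\in N.
\end{equation}
Because $\Gamma\subset N$, such a $\hat g$ descends to a metric $\hat g_M\in[g_M]$ on $M$ that $\Conf(M)=N/\Gamma$ preserves, so the action is non-essential.

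I would first dispose of the case where $\Gamma$ is isometric (the first alternative in the structure theorem on cocompact groups established above). Then $\chi|_{\Gamma}\equiv1$, so $g$ itself descends to a genuine metric $g_M$ on $M$, and one takes $f=0$. For any $\psi\in\Conf(M)$ with lift $\tilde\psi\in N$ one has $\psi^*g_M=\chi(\tilde\psi)\,g_M$; comparing the volume densities on the compact manifold $M$ gives $\chi(\tilde\psi)^{(n+2)/2}\,\Vol(M,g_M)=\Vol(M,g_M)$, whence $\chi(\tilde\psi)=1$. So every conformal transformation of $M$ is already a $g_M$-isometry, and \eqref{cocycle} holds trivially.

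For the remaining case the structure theorem gives $\Gamma=\Gamma_0\rtimes\langle\gamma_0\rangle$ with $\Gamma_0=\Gamma\cap\Isom(X)$ isometric and $\gamma_0$ a homothety, $\chi(\gamma_0)=c_0\neq1$; here $g$ does not descend and \eqref{cocycle} forces $f$ to be unbounded. The key reduction is that the problem becomes one-dimensional. Preservation of the lightlike line $[V]$ yields a character $\kappa$ (with $\phi_*V=\kappa(\phi)V$) and an affine action of $\Conf(X)$ on the $u$-line $\R$ with linear part $a_\phi=\chi(\phi)/\kappa(\phi)$ and translation part $b_\phi$, while the identity component of $N$ preserves each wave front $\{u=\text{const}\}$ and so acts trivially on the $u$-line. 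Hence it is enough to find $f=f(u)$ with $f(a_\phi u+b_\phi)-f(u)=-\log\chi(\phi)$ for all $\phi\in N$. Matching the similarity condition $\phi^*g=\chi(\phi)g$ against the Brinkmann form imposes the profile identity $S(a_\phi u+b_\phi)=a_\phi^{-2}\,O_\phi\,S(u)\,O_\phi^{\!\top}$ for some $O_\phi\in\O(n)$; feeding in the explicit shape of $S$ for a homogeneous plane wave should pin down $(a_\phi,b_\phi)$ in terms of $\chi(\phi)$, forcing one of two regimes: either all $a_\phi=1$ and $\log\chi(\phi)$ is proportional to $b_\phi$, or $N$ fixes a common point $u_*$ of the $u$-line and $\chi(\phi)=|a_\phi|^{\gamma}$ for a fixed $\gamma$. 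In the first regime $f(u)=-\beta u$ solves the equation, in the second $f(u)=-\gamma\log|u-u_*|$ does; either way $\hat g=e^{f(u)}g$ is $N$-invariant (the $V$-flow preserves $f$ because it preserves $u$), descends to $M$, and is preserved by $\Conf(M)$.

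The hard part will be exactly this compatibility step: establishing on the \emph{full} normalizer $N$, and not merely on $\Gamma$, that $\chi$ is proportional in logarithm to the one-dimensional $u$-line data ($b_\phi$, respectively $\log|a_\phi|$), so that the cocycle \eqref{cocycle} is a coboundary realized by a single function of $u$. This is where the classification of homogeneous plane waves by their profile $S$ must be combined with the cocompactness of $\Gamma$: cocompactness should constrain $N$ strongly enough to force $\chi(N)$ to be the cyclic group $c_0^{\Z}$ and the $u$-line dynamics of $N$ to be of a single type, ruling out normalizing elements whose homothety factor is incompatible with the $u$-line action and would otherwise obstruct a global invariant metric. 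Once this rigidity is in place the construction of $f$, and hence the conclusion, is routine.
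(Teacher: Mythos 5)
Your overall architecture matches the paper's: lift to $X$, identify $\Conf(M)$ with $N/\Gamma$ where $N=N_{\Conf(X)}(\Gamma)$, dispose of the isometric case by a volume/homothety argument, and in the essential case solve the cocycle equation $f\circ\phi-f=-\log\chi(\phi)$ by a function of $u$ alone, which is automatically preserved by $N\cap(K\ltimes\Heis)$. That reduction is correct, and in the translation regime a linear $f(u)=-\beta u$ does solve the equation once one knows that $\chi$ and the $u$-translation part are simultaneously trivial on $N\cap(K\ltimes\Heis)$ and proportional on a cyclic complement. (The paper solves the same one-variable equation by an explicit equivariant interpolation, which is equivalent here.)

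The genuine gap is the step you yourself flag as ``the hard part'': proving that $\chi(N)$ is cyclic, i.e.\ that the projection of $N$ to $\R_{\bf H}\times\R_{\bf L}$ is isomorphic to $\Z$ (and, first, that $N$ lies in the identity component $G$ at all, ruling out the $u$-reversing isometry $\sigma$). Without this, the cocycle need not be a coboundary realized by a single function of $u$: if $\overline{p(N)}$ were $\R$, $\R\times\Z$, $\Z^2$ or $\R^2$, no such $f$ exists in general. Moreover, the mechanism you propose for closing this gap cannot work: the profile identity $S(a_\phi u+b_\phi)=a_\phi^{-2}O_\phi S(u)O_\phi^{\top}$ is satisfied by \emph{every} element of $\Conf(X)$ (it is essentially how one computes $\Conf(X)$, which is already known), so it places no constraint whatsoever on the subgroup $N$ beyond what holds for the full conformal group. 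The actual constraint comes from discreteness of $\Gamma$ and the normalizing condition, not from the shape of $S$. The paper's proof of this step is the technical core of the argument: it first shows $N_{\widehat G}(\Gamma)\subset G$ using the contraction dynamics of elements with nontrivial $\R_{\bf H}$-part against $\Gamma\cap\Heis\neq\{1\}$; then, for $Q=p(N_G(\Gamma_0))$ with $\Gamma_0=\Gamma\cap\Heis$, it excludes $\overline{Q}\simeq\R^2,\ \R\times\Z,\ \Z^2$ again by contraction dynamics, and excludes $\overline{Q}\simeq\R$ by a Zassenhaus-neighborhood argument producing a nilpotent group $\langle g\rangle\ltimes\Gamma_0$ with $r(g)$ close to the identity, combined with unimodularity of the induced action on the Malcev hull $N_0\supset Z$, forcing the $\R_{\bf H}$-component to be trivial. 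None of this is replaced by anything in your proposal, so as written the proof is incomplete at its decisive point. (A minor additional remark: your second regime, with a common fixed point $u_*$ and $f(u)=-\gamma\log|u-u_*|$, would not produce a smooth metric on all of $X$; fortunately that regime is vacuous, since once $N\subset G$ the action on the $u$-line is by translations.)
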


\begin{corollary}\label{Corollary: Non essentiality in the homog case}
Let $X=G/H$ be a conformally homogeneous Lorentzian space. The conformal group of a (conformal) compact quotient of $X$ is essential if and only if $X$ is conformally flat. 
\end{corollary}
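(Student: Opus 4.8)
The plan is to deduce the corollary from the preceding theorem on plane-wave quotients together with the dichotomy of Alekseevsky and Galaev \cite{Alekseevsky2024}. Write $\tilde X$ for the universal cover of $X=G/H$. Since the covering $\tilde X \to X$ is a local conformal diffeomorphism, $X$ is conformally flat if and only if $\tilde X$ is, and $\tilde X$ is again conformally homogeneous; moreover a conformal compact quotient $M$ of $X$ is also a conformal compact quotient $\tilde\Gamma\backslash \tilde X$ of $\tilde X$. Hence there is no loss in assuming $X$ to be $1$-connected from the outset, with $G$ its full conformal group.

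First I would establish the substantial implication: if $\Conf(M)$ is essential, then $X$ is conformally flat. The key reduction, already recorded in the discussion preceding Question \ref{Question (G,X)-essentiality}, is that essentiality of $\Conf(M)$ forces $G$ to be essential on $X$. Indeed, a $G$-invariant metric in the conformal class of $X$ would be preserved by the holonomy $\tilde\Gamma\subset G$, and would therefore descend to a metric on $M$ lying in $[g_M]$; since every conformal transformation of $M$ lifts to an element of $\Conf(\tilde X)=G$, this descended metric would be invariant under the conformal group of $M$, contradicting essentiality downstairs. Once $G$ is essential on the $1$-connected conformally homogeneous space $X$, the Alekseevsky--Galaev dichotomy \cite{Alekseevsky2024} leaves exactly two possibilities: either $X$ is conformally flat, or $X$ is conformal to a complete, non-flat homogeneous plane wave. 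In the second case $M$ is a conformal compact quotient of a $1$-connected non-flat homogeneous plane wave, so the preceding theorem applies and yields that $\Conf(M)$ is non-essential, contradicting the hypothesis. Therefore only the first alternative survives, and $X$ is conformally flat.

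For the converse I would argue that conformal flatness is precisely the regime in which essential compact quotients occur. This is already visible at the level of the models: the conformally flat conformally homogeneous Lorentzian spaces admit compact quotients whose conformal group preserves no metric in the conformal class (the Einstein universe, itself compact with essential conformal group $\O(2,n)$, being the simplest witness). Thus conformal flatness is not only necessary but also sufficient for essentiality to be possible. I would phrase this backward implication at the level of existence of an essential quotient, since for a fixed conformally flat $X$ not every compact quotient need be essential, as flat tori illustrate.

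The step I expect to be the genuine obstacle is the essentiality reduction from $M$ to $X$: one must verify that an invariant metric upstairs truly descends to a metric in $[g_M]$ invariant under the \emph{full} conformal group of $M$, and not merely under the image of $\tilde\Gamma$ or the identity component. This amounts to controlling how conformal transformations of the compact quotient lift to the model $X$ through the developing map and interact with $\tilde\Gamma$; the remaining arguments are a formal combination of the dichotomy \cite{Alekseevsky2024} with the plane-wave theorem proved above.
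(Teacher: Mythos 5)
Your proposal is correct and follows essentially the same route as the paper: reduce to the universal cover via the descent remark (essentiality of $\Conf(M)$ forces essentiality of $G$ on $X$), invoke the Alekseevsky--Galaev dichotomy, and rule out the plane-wave alternative by Theorem \ref{Theorem: non-essentiality}. Your explicit treatment of the converse direction (phrased as existence of an essential quotient, witnessed by the Einstein universe) is a reasonable reading of the ``if'' that the paper's own three-line proof leaves implicit.
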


\subsubsection{\textbf{More general compact models: a Fried-type theorem ?}}
For a non-flat $1$-connected homogeneous plane wave $X$, we have shown that complete compact models are non-essential.  However, incomplete models also exist. In the context of compact similarity Riemannian manifolds, Hopf manifolds serve as examples of such spaces. These manifolds are quotients of $\R^n \smallsetminus \{0\}$ by a cyclic group generated by a contraction of $\R^n$, and they are homeomorphic to $\S^{n-1} \times \S^1$. Fried's theorem \cite{fried} asserts that Hopf manifolds are finite covers of the incomplete similarity structures, so that these manifolds and the Euclidean space forms are the only compact similarity manifolds. 
In the case of plane waves,  we formulate the following problem as part of Question \ref{Question (G,X)-essentiality}: 
\begin{problem}\label{Question 2}
Let $X$ be a $1$-connected non-flat homogeneous   plane wave, and $G$ its conformal group. 
\begin{enumerate}
    \item What are the incomplete compact models of this $(G,X)$-structure ? Are they Kleinian ? Do they satisfy a Fried-type theorem ?
    \item Study essentiality of these models. 
\end{enumerate}
\end{problem}
The study of Problem \ref{Question 2} will be the subject of an upcoming work.\bigskip

\noindent \textbf{Further ?} Finally, let us mention that  compact (conformally) locally homogeneous Lorentzian manifolds are more general than compact $(G,X)$-manifolds, where $(G,X)$ is a Lorentzian conformal structure.  Indeed, given such a manifold $M$, the universal cover $\widetilde{M}$ is not necessarily homogeneous, for its Killing fields are not necessarily complete. 
However, as in the homogeneous case, the cases where the conformal group of $\widetilde{M}$ fails to preserve a conformal metric are special and deserve to be understood.

\subsection*{Organization of the paper}
In Section 2, we recall the necessary background on homogeneous plane waves, their isometry groups, and conformal groups. Section 3 is devoted to the study of properly discontinuous and cocompact group actions of the conformal group, where we construct examples of groups acting properly discontinuously and cocompactly, but not via isometries. Section 4 focuses on the non-essentiality of the conformal groups of compact quotients.

\section{Preliminaries}
We refer the reader to \cite{article1} for the precise definition of a plane wave. Here, in the homogeneous setting, we recall only its characterization as a homogeneous space.

\subsection{Isometry group} The $(2n+1)$-dimensional Heisenberg group $\Heis_{2n+1}=\R^n \ltimes \R^{n+1}$ is the subgroup of $\Aff(\R^{n+1})$ defined by
$$\Heis_{2n+1}=\left\{\begin{pmatrix}
    1 &\alpha^{\top} \\
    0 &I_n
\end{pmatrix} | \  \alpha \in \R^{n} \right\}\ltimes \R^{n+1}.$$
Denote by $A^+=\R^n$ the abelian subgroup of unipotent matrices,  by $A^-$ the subgroup $\{0\} \times \R^n$ of the translation part, and by $Z$ the translation subgroup $\R \times \{0\}$, which is the center of the Heisenberg group. Their Lie algebras are denoted by $\a^+, \a^-,$ and $\z$, respectively. Set $\a:=\a^+\oplus \a^-$. \medskip

\noindent \textbf{Identity component.} Let $(X,V)$ be a $1$-connected non-flat  homogeneous plane wave of dimension $n+2$.
In \cite{article1}, it is shown that the identity component of the isometry group has finite index in the full isometry group. Moreover, the identity component of the isometry group is computed and has the form
$$G_\rho:=(\R \times K) \ltimes_{\rho} \Heis_{2n+1},$$
where 
\begin{itemize}
    \item $K$ is a closed subgroup of $\SO(n)$
    \item $\rho$ is a morphism $\rho: \R\times K \to \Aut(\Heis_{2n+1})$, where $\rho$ restricted to $\R$ is given by $\rho(t)= e^{t{\bf L}}$,  with ${\bf L} \in \Der(\heis_{2n+1})$ preserving the decomposition $\heis_{2n+1}=\a \oplus \z$,
    \item $\rho(k)$, for  $k\in K$, acts as the identity on the center of $\Heis_{2n+1}$,  and coincides with the standard action of $k$ on $A^+$ and $A^-$. 
\end{itemize}
Then, $X$ is identified with the quotient $X_{\rho }:=G_\rho/I$, where  $I= K\ltimes A^+$. The $\rho$-actions for which $G_\rho$ preserves a Lorentzian metric on $G_\rho/I$ are characterized in \cite{article1}; in this case, the metric is necessarily a plane wave. The proofs here work without this restriction on the $\rho$-action. \medskip

Henceforth, we will write $\R_{\bf L}$ instead of $\R$ to specify that $\R$ acts on $\Heis_{2n+1}$ via the one parameter subgroup $e^{t {\bf L}} \subset \Aut(\Heis_{2n+1})$. 

\begin{fact}$($\cite[Fact 3.1]{article2_crelle}$)$.\label{Fact: V defined by Z}
 The parallel vector field $V$ is generated by the action of the center of $\Heis_{2n+1}$.   
\end{fact}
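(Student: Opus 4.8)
The plan is to produce $V$ as the fundamental vector field of the central one-parameter subgroup $Z$, and to do so through intrinsic invariance-theoretic properties rather than coordinates. Write $o=eI$ for the base point of $X=G_\rho/I$, with $\i=\Lie(I)=\k\oplus\a^+$, and for $\xi\in\g_\rho$ let $\xi^{*}$ denote the fundamental vector field of the left $G_\rho$-action. First I would check that $Z$ is normal in $G_\rho$: it is the center of $\Heis_{2n+1}$, hence characteristic there, and $\Heis_{2n+1}\triangleleft G_\rho$ while $\rho(\R_{\bf L}\times K)$ acts by automorphisms of $\Heis_{2n+1}$, which preserve its center. Since $\z\cap\i=0$, the field $\z^{*}$ is nonvanishing, and from $(L_g)_{*}\xi^{*}=(\Ad(g)\xi)^{*}$ together with $\Ad(G_\rho)\z=\z$ I conclude that $\ell:=\R\,\z^{*}$ is a $G_\rho$-invariant line field on $X$.

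Next I would show that $\ell$ is lightlike. At $o$ the metric is an isotropy-invariant symmetric bilinear form $B$ on $T_oX=\g_\rho/\i$; the defining infinitesimal invariance reads $B([a,x],y)+B(x,[a,y])=0$ for all $a\in\i$ and $x,y\in\g_\rho/\i$ (this needs no reductivity, only that $\i$ is a subalgebra). Taking $a\in\a^+$ and $y\in\a^-$ with $[a,y]=c$ a generator of $\z$, and using the Heisenberg relations $[\a^+,\a^-]=\z$ and $[\a^+,\z]=0$, this identity collapses to $B(c,c)=0$. Thus $\z^{*}$ is lightlike at $o$, and by $G_\rho$-invariance of $\ell$ it is lightlike on all of $X$.

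It remains to identify $\ell$ with $\R V$. Since $X$ is a non-flat plane wave, $V$ spans the unique parallel lightlike direction; as isometries preserve parallelism and lightlikeness and $G_\rho$ is connected, $\R V$ is likewise a $G_\rho$-invariant lightlike line field, and $\R V(o)$ is an isotropy-invariant lightlike line in $\g_\rho/\i$. Two $G_\rho$-invariant line fields that agree at $o$ agree on all of the homogeneous space $X$, so it suffices to prove that $\R V(o)$ is the image of $\z$. The main obstacle is exactly this last point: singling out the central direction among the isotropy-invariant lightlike lines. I expect the cleanest route is to match $\z^{*}$ with $\partial_v$ in the Brinkmann model of \cite{article1}, where the center acts by the translations $(u,v,x)\mapsto(u,v+s,x)$ and $V=\partial_v$; a purely intrinsic alternative is to verify $\nabla\z^{*}=0$ through the Nomizu formula for the invariant connection, using that $\z$ is both the center and the commutator ideal of the Heisenberg factor, and then to invoke uniqueness of the parallel lightlike line. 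Either way, once $\R V(o)$ is identified with $\z$ the proof closes by the homogeneity argument above, giving $\ell=\R V$ and hence that $V$ is generated by $Z$.
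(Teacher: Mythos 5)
The paper offers no proof of this statement: it is imported verbatim from \cite[Fact 3.1]{article2_crelle}, so there is no internal argument to measure yours against, and I can only judge the proposal on its own terms. Up to its final step it is correct and pleasantly intrinsic: $Z$ is indeed normal in $G_\rho$, so $\Ad(G_\rho)\z=\z$ and, since $\z\cap\i=0$, the fundamental field $\z^{*}$ is nowhere vanishing and spans a $G_\rho$-invariant line field; and your computation $B(c,c)=0$ from the infinitesimal isotropy invariance with $a\in\a^{+}$, $y\in\a^{-}$, $[a,y]=c$, $[\a^{+},\z]=0$ is valid and needs no reductive splitting.

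The gap is exactly where you locate it: the identification $\R\,\z^{*}(o)=\R V(o)$ is never established, and neither proposed route closes it as written. Route (a) amounts to quoting the explicit Brinkmann description of the $\Heis$-action from \cite{article1} (the center acting by $v$-translations), which \emph{is} the content of the cited fact rather than a proof of it. Route (b) is more fragile than you suggest: homogeneous plane waves are in general non-reductive homogeneous spaces (with $\i=\k\oplus\a^{+}$, the bracket $[\a^{+},{\bf L}]$ has a nonzero $\a^{+}$-component, so the obvious complement is not $\Ad(I)$-invariant), and the Nomizu formula in its standard form does not apply. There is, however, a purely algebraic finish in the spirit of your steps 1--3: for $a\in\a^{+}$ the isotropy operator $\lambda(a)=\ad_a \bmod \i$ on $\g_\rho/\i$ is nilpotent, sending $\R{\bf L}\to\a^{-}\to\z\to 0$; hence any isotropy-invariant line lies in $\bigcap_{a\in\a^{+}}\ker\lambda(a)$, and the nondegeneracy of the Heisenberg pairing $\a^{+}\times\a^{-}\to\z$ together with the identity block of ${\bf L}$ forces that intersection to be exactly $\z$. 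Since $\R V(o)$ is an isotropy-invariant line, it equals $\z$, and your homogeneity argument then yields $\R V=\R\,\z^{*}$ (which gives the Fact up to the harmless normalization of the generator of $Z$). With that step supplied your proof would be complete; as submitted, the decisive point is only conjectured.
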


\noindent\textbf{The full isometry group.} The full isometry group is not written in \cite{article1}; however, it can be directly deduced from the results there. 

The orthogonal distribution $V^{\perp}$ of a plane wave is parallel, hence integrable, defining a codimension-one foliation  $\mathcal{F}$. 
This foliation  is defined by the non-singular closed $1$-form $\omega=g(V,.)$, so it is transversely affine. 
Since $X$ is $1$-connected and homogeneous, the leaf space $\xi:=X/\mathcal{F}$ is diffeomorphic to $\R$, with a global affine parameter given by a section of a submersion $u: X\to \R$ satisfying $\omega = du$. 

Any isometry preserving $\R V$ induces an action on $\xi$, preserving its affine structure. As noted in the beginning of \cite[Section 5]{article1}, a non-flat plane wave admits a unique parallel null vector field $V$ (up to scale), so any isometry of $X$ maps $V$ to $\alpha V$, for some $\alpha \in \R \smallsetminus \{0\}$.   This yields a representation $\pi: \Isom(X) \to \Aff(\R)$, whose kernel  consists of isometries acting trivially on  $\xi$. These are given by $ \widehat{K} \ltimes_{\rho} \Heis_{2n+1}$,
where $\widehat{K}$ is a closed subgroup of $\O(n)$ and a finite extension of $K$. Isometries preserving $V$ but not in the kernel of $\pi$ correspond to the translation subgroup $\R$ in $\Aff(\R)$, i.e. they preserve a translation structure on $\xi$.  By Fact \ref{Fact: V defined by Z}, these are given by $ (\R_{\bf L} \times \widehat{K}) \ltimes_{\rho} \Heis_{2n+1}$ if the $\R_{\bf L}$-action on $\Heis_{2n+1}$ is trivial on its center, and by $\widehat{K} \ltimes_{\rho} \Heis_{2n+1}$ otherwise.

Since the full isometry group has finite index in $G_\rho$ (\cite[Proposition 5.1]{article1}), any isometry which is not in $G_\rho$ maps $V$ to $a V$, with $\alpha^2=1$. The case $\alpha=1$ is  discussed above,  so consider an isometry $\sigma$ that sends $V$ to $-V$. By the invariance of the affine structure on $\xi$, we have $u \circ \sigma = -u +b$, where $u \in C^\infty(X,\R)$ is a global affine parameter. Moreover, $\sigma^2$ is an isometry that acts trivially on the leaf space $\xi$. Up to composing with its inverse, we can assume $\sigma^2= \Id$. 
Therefore, in  the global Brinkmann coordinates (\ref{Introduction: Eq Brinkmann coordinates}), this extra isometry is given by $$\sigma(v,x,u)=(-v,x,-u+b),$$
up to composition with a reflection of $X$ acting trivially on $\xi$. 
We see that $\sigma$ exists if and only if there exists some $b\in \R$ such that $S(-u+b)=S(u)$ in the metric (\ref{Introduction: Eq Brinkmann coordinates}). In particular, such spaces are complete.  In this case, the full isometry group is
$$\widehat{G}_\rho:=(E_{\bf L}(1) \times \widehat{K}) \ltimes_{\rho} \Heis_{2n+1},$$ 
where $E_{\bf L}(1):= \langle \sigma \rangle \ltimes \R_{\bf L}$ is isomorphic to the Euclidean group of $\R$, and $\widehat{K}$ is a closed subgroup of $\O(n)$ and a finite extension of $K$. 
Cahen-Wallach spaces, where  $S(u)$ is constant, always admit this extra isometry. \medskip

In the generic case, the full isometry group is $$\widehat{G}_\rho:=(\R_{\bf L} \times \widehat{K}) \ltimes_{\rho} \Heis_{2n+1}.$$

\subsection{Conformal group}
The conformal group of $X$ is computed in  \cite{Alekseevsky2024}: \begin{theorem}\cite[Theorem 2]{Alekseevsky2024}
Let $(X,g)$ be a $1$-connected homogeneous non-conformally flat plane wave. Then the conformal group $\Conf(X,g)$ consists of similarities and it is a $1$-dimensional extension of the group of isometries.
\end{theorem}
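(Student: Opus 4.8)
The plan is to establish the two assertions in turn: first that every element of $\Conf(X,g)$ is a similarity, i.e. has \emph{constant} conformal factor, and then that the quotient $\Conf(X,g)/\Isom(X,g)$ is one-dimensional. Throughout I would lean on the infinitesimal result of \cite{conformal_Killing_PW}, which identifies the Lie algebra of conformal vector fields with a one-dimensional (homothetic) extension of the Killing algebra; the remaining task is to promote this to the level of the full group, including components not connected to the identity, and to rule out any non-constant conformal factor.

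\textbf{Step 1: conformal rigidity of the null direction.} Since $(X,g)$ is not conformally flat, its Weyl tensor $W$ is nonzero, and for a plane wave it is of the special algebraic type whose radical recovers the parallel null line $\R V$ together with the orthogonal hyperplane distribution $V^{\perp}$; concretely, in the Brinkmann coordinates (\ref{Introduction: Eq Brinkmann coordinates}) the only surviving curvature is carried by $S_{ij}(u)$ and is annihilated by $V$. As $W$, viewed as a $(1,3)$-tensor, is a pointwise conformal invariant, any $\Phi \in \Conf(X,g)$ must preserve the distributions it determines; hence $\Phi_*V \in \R V$ pointwise and $\Phi$ preserves the codimension-one foliation $\mathcal{F}$ defined by $V^{\perp}$.

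\textbf{Step 2: affine action on the leaf space and constancy of the factor.} Write $\Phi^* g = e^{2f} g$. Because $\Phi$ preserves $\mathcal{F}$, and this foliation is defined by the closed form $\omega = g(V,\cdot) = du$, the pullback $\Phi^*\omega$ defines the same foliation, so $\Phi^*\omega = h\,\omega$ with $dh \wedge \omega = 0$; thus $h$ descends to the leaf space $\xi \cong \R$. Invariance of the transverse affine structure forces the induced map on $\xi$ to be affine, $u\circ\Phi = au+b$, whence $\Phi^*\omega = a\,\omega$ with $a$ constant. Comparing this with $\Phi^*g = e^{2f}g$ through the relation $\omega = g(V,\cdot)$ constrains $f$ in the transverse direction, while its dependence on the leaf coordinates is killed by differentiating along conformal Killing fields and invoking the homothetic rigidity of \cite{conformal_Killing_PW}, yielding that $f$ is globally constant and $\Phi$ is a similarity. \emph{This is the step I expect to be the main obstacle}: passing from the pointwise/infinitesimal control of the conformal factor to its global constancy requires carefully combining the affine action on $\xi$ with the structure equations of the parallel field $V$, and it is exactly where non-conformal-flatness is indispensable.

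\textbf{Step 3: the one-dimensional extension.} Finally I would locate the extra dimension. The derivation ${\bf L}\in\Der(\heis_{2n+1})$ whose flow rescales the center $\z$ generates, via Fact \ref{Fact: V defined by Z}, a one-parameter group scaling $V$, hence acting by genuine homotheties; this exhibits a similarity that is not an isometry, so $\Conf(X,g)\supsetneq \Isom(X,g)$. Combining with Step 2, every conformal transformation is a similarity, so the similarity character $\Phi \mapsto c$ (where $\Phi^*g = cg$) is a well-defined homomorphism $\Conf(X,g)\to \R_{>0}$ whose kernel is exactly $\Isom(X,g)$, and the infinitesimal computation of \cite{conformal_Killing_PW} shows its image is one-dimensional. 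Hence $\Conf(X,g)$ consists of similarities and is a one-dimensional extension of $\Isom(X,g)$, as claimed.
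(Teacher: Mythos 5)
The paper does not prove this statement: it is quoted verbatim from \cite[Theorem 2]{Alekseevsky2024} and used as an input, so there is no in-paper proof to compare against. Your proposal must therefore stand on its own, and as written it has a genuine gap at exactly the point you flag as the main obstacle.

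The problem is that Step 2 is circular. You deduce constancy of the conformal factor from the claim that $\Phi$ acts affinely on the leaf space $\xi$, but the transverse affine (translation) structure on $\xi$ is defined by $\omega=g(V,\cdot)=du$, and its invariance under the conformal group is a \emph{consequence} of the similarity property, not an a priori fact: in the paper, the Fact immediately following the quoted theorem proves affine invariance precisely by invoking ``acts on $g$ by similarities''. For a general $\Phi$ with $\Phi^*g=e^{2f}g$, the field $\Phi_*V$ is parallel only for the rescaled metric, so Step 1 only gives $\Phi_*V=\mu V$ with $\mu$ a function; a direct computation then yields $\Phi^*\omega=(e^{2f}/\mu)\,\omega$ with $e^{2f}/\mu$ merely constant along the leaves of $\F$, hence $u\circ\Phi=F(u)$ for an arbitrary diffeomorphism $F$ of $\R$, not an affine map. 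Even granting affineness you would only get $e^{2f}=a\mu$, which does not force $df=0$ without separately controlling $\mu$. The phrase ``differentiating along conformal Killing fields and invoking the homothetic rigidity'' gestures at the right mechanism but is not an argument. A way to actually close this step: $\Phi$ conjugates the conformal Killing algebra to itself, and by \cite{conformal_Killing_PW} the Killing algebra is the canonical codimension-one ideal in it, so $\Phi_*$ preserves the Killing algebra; then for every Killing field $K$ one has $L_{\Phi_*K}g=0$ and $L_{\Phi_*K}(\Phi_*g)=0$ simultaneously, which gives $(\Phi_*K)(f\circ\Phi^{-1})=0$, and transitivity of the isometry group forces $f$ to be constant. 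Finally, in Step 3 the surjectivity of the similarity character onto $\R_{>0}$ requires the homothetic conformal Killing field to be complete (this is where the reduction to complete plane waves matters), and should be said explicitly rather than attributed to the infinitesimal computation alone.
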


The full conformal group is given by $$\widehat{G}:= (\R_{\bf H} \times E_{\bf L}(1) \times \widehat{K}) \ltimes_{\rho'} \Heis_{2n+1},$$ 
(or an index $2$ subgroup of it), where
\begin{itemize}
    \item $\rho'$ coincides with $\rho$ on $E_{\bf L}(1) \times \widehat{K}$,  
    \item $\rho'$ restricted to $\R_{\bf H}$ is given by $\rho'(t)=e^{t \bf H}$, with $${\bf H}:=
    \begin{pmatrix}
        I_n & 0 & 0\\
        0 & I_n & 0\\
        0 & 0 & 2
    \end{pmatrix} \in \Der(\heis_{2n+1}),
    $$
    written in the decomposition $\heis= \a^+ \oplus \a^- \oplus \z$.   The associated one-parameter group $e^{t \bf H}$ acts as homotheties on $\heis_{2n+1}$.\medskip
\end{itemize}

\noindent Then, $X$ is identified with the quotient $X_{\rho '}:=\widehat{G}/I'$, where  $I'= (\R_{\bf H} \times \langle \sigma \rangle \times \widehat{K})\ltimes A^+$. 
\bigskip

The identity component of the conformal group  is  given by $$G:=(\R_{\bf H}\times \R_{\bf L} \times K) \ltimes \Heis_{2n+1}.$$ 
It has finite index in $\widehat{G}$. So if $\Gamma$ is a discrete subgroup of $\widehat{G}$, then $\Gamma \cap G$  has finite index in $\Gamma$. 

\begin{observation} $\Gamma \cap G$ is contained in the identity component $G_\rho$ of the isometry group if and only if $\Gamma$ is contained in the isometry group $\widehat{G}_\rho$.     
\end{observation}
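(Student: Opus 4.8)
The plan is to use the homomorphism recording the similarity ratio. By the cited theorem of Alekseevsky--Galaev, the full conformal group $\widehat{G}$ consists of similarities, so every $\Phi \in \widehat{G}$ satisfies $\Phi^* g = \lambda(\Phi)\, g$ for a unique $\lambda(\Phi) \in \R_{>0}$, and multiplicativity of the ratio under composition makes $\lambda : \widehat{G} \to \R_{>0}$ a group homomorphism. Its kernel is precisely the transformations preserving $g$, i.e. the full isometry group, so $\ker\lambda = \widehat{G}_\rho$. Since the factor $\R_{\bf H}$ acts by genuine homotheties, $\lambda|_{\R_{\bf H}}$ surjects onto $\R_{>0}$, and reading off the explicit semidirect-product descriptions one checks directly that $\ker(\lambda|_G) = G_\rho$ and $G \cap \widehat{G}_\rho = G_\rho$ (in particular, in the complete case the extra isometry $\sigma$ reverses orientation on the leaf space and so does not lie in the identity component $G$).

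One implication is then immediate. If $\Gamma \subset \widehat{G}_\rho$, then $\Gamma \cap G \subset \widehat{G}_\rho \cap G = G_\rho$, which is exactly the left-hand side of the equivalence.

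For the converse, I would assume $\Gamma \cap G \subset G_\rho$ and argue that $\lambda$ vanishes on all of $\Gamma$. Because $G$ is the identity component of $\widehat{G}$ it is normal, hence $\Gamma \cap G$ is a \emph{normal} subgroup of $\Gamma$, of finite index $m := [\Gamma : \Gamma \cap G] < \infty$ (as recalled in the excerpt). Then for any $\gamma \in \Gamma$ the class $\gamma(\Gamma \cap G)$ has order dividing $m$ in the finite quotient $\Gamma/(\Gamma \cap G)$, so $\gamma^m \in \Gamma \cap G \subset G_\rho \subset \ker\lambda$. Applying $\lambda$ gives $\lambda(\gamma)^m = \lambda(\gamma^m) = 1$ in $\R_{>0}$; since $\R_{>0}$ is torsion-free this forces $\lambda(\gamma) = 1$, i.e. $\gamma \in \ker\lambda = \widehat{G}_\rho$. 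As $\gamma$ was arbitrary, $\Gamma \subset \widehat{G}_\rho$.

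The only structural inputs are the identification of $\ker\lambda$ with the isometry group and the two intersection computations $\ker(\lambda|_G)=G_\rho$, $G\cap\widehat{G}_\rho=G_\rho$; the rest is the elementary fact that a homomorphism into a torsion-free target must kill any element whose power lands in a finite-index normal subgroup on which the homomorphism is trivial. I expect the one point worth verifying with care is that $\lambda$ is well defined and multiplicative on \emph{all} of $\widehat{G}$ rather than merely on the identity component, which follows from the cited description of $\widehat{G}$ as a group of similarities; the bookkeeping with the semidirect factors $\R_{\bf H}$, $E_{\bf L}(1)$, $\widehat{K}$ and $\Heis_{2n+1}$ is then routine.
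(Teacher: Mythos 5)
Your proof is correct, and since the paper states this as an Observation with no written justification, your argument via the similarity-ratio homomorphism $\lambda:\widehat{G}\to\R_{>0}$ (kernel $=\widehat{G}_\rho$, restriction to $G$ has kernel $G_\rho$), together with the finite index of $\Gamma\cap G$ in $\Gamma$ and the torsion-freeness of $\R_{>0}$, is precisely the reasoning the author leaves implicit; it is equivalent to observing that membership in the isometry group is detected by the projection $p_{\bf H}$ onto the $\R_{\bf H}$-factor, which vanishes on a subgroup as soon as it vanishes on a finite-index subgroup. All the structural inputs you invoke ($\widehat{G}$ consists of similarities, $[\widehat{G}:G]<\infty$, and $G\cap\widehat{G}_\rho=G_\rho$) are established earlier in the paper, so there is no gap.
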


In the study of  quotients of $X$ by subgroups of $\widehat{G}$ acting properly discontinuously and cocompactly, we consider the quotients up to finite covering. By the observation above, taking the finite-index subgroup $\Gamma \cap G$ does not affect the property of $\Gamma$ being, or not being, contained in the isometry group.
Thus, we will always assume that $\Gamma$ is contained in the identity component $G$ of the conformal group.

\begin{fact}
The conformal group $\widehat{G}$ preserves the line field $\R V$, hence induces an action on the leaf space $\xi$ which preserves the affine structure on it.  And the subgroup $G$ preserves a translation structure. 
\end{fact}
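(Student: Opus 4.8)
The plan is to exploit Alekseevsky and Galaev's theorem that every conformal transformation of $X$ is a similarity. If $\Phi \in \widehat{G}$ satisfies $\Phi^* g = c_\Phi\, g$ for a constant $c_\Phi > 0$, then $\Phi$ preserves the Levi-Civita connection of $g$ (a constant rescaling does not change it), so $\Phi$ is affine. An affine map carries parallel fields to parallel fields and preserves null directions, and since $X$ is a non-flat plane wave its parallel null direction is unique up to scale (as recalled in the description of the isometry group). Hence $\Phi_* V = \alpha_\Phi V$ for some $\alpha_\Phi \in \R \setminus \{0\}$, so $\widehat{G}$ preserves the line field $\R V$. Being conformal, $\Phi$ also preserves orthogonality, hence preserves $V^\perp = T\mathcal{F}$ and the foliation $\mathcal{F}$; it therefore descends to a diffeomorphism $\bar\Phi$ of the leaf space $\xi \cong \R$.

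For the affine part I would compute $\Phi^*\omega$ directly. Writing $\omega = g(V,\cdot)$ and using $\Phi^* g = c_\Phi\, g$ together with $\Phi_* V = \alpha_\Phi V$, one gets
$$\Phi^*\omega = \frac{c_\Phi}{\alpha_\Phi}\,\omega,$$
so that $d(u\circ\Phi) = (c_\Phi/\alpha_\Phi)\,du$ and hence $u\circ\Phi = \lambda_\Phi\, u + b_\Phi$ with $\lambda_\Phi = c_\Phi/\alpha_\Phi$. Thus each $\Phi$ induces on $\xi$ the affine map $\bar u \mapsto \lambda_\Phi\, \bar u + b_\Phi$, which shows that $\widehat{G}$ preserves the affine structure on $\xi$; moreover $\Phi \mapsto \lambda_\Phi$ is a homomorphism into $\R^*$.

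It remains to show that $\lambda_\Phi = 1$ for every $\Phi$ in the identity component $G$, so that $G$ acts by genuine translations. Since $\lambda$ is a homomorphism it suffices to check the generators. The subgroup $\widehat{K}\ltimes_{\rho} \Heis_{2n+1}$ lies in the kernel of $\pi$, i.e. acts trivially on $\xi$, so $\lambda = 1$ there; and $\R_{\bf L}$ acts by translations (as recalled in the description of the isometry group), so again $\lambda = 1$. The only new factor is the homothety flow $\R_{\bf H}$. In the Brinkmann coordinates (\ref{Introduction: Eq Brinkmann coordinates}), the weights of ${\bf H}$ (namely $1$ on $\a^+\oplus\a^-$ and $2$ on $\z$, the latter generating $V$ by Fact \ref{Fact: V defined by Z}) force $e^{t{\bf H}}$ to act as $(v,x,u)\mapsto(e^{2t}v,\,e^{t}x,\,u)$, which rescales the metric by the constant $e^{2t}$ while fixing $u$; hence $c_\Phi = \alpha_\Phi = e^{2t}$ and $\lambda = 1$. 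Consequently $\lambda \equiv 1$ on $G$, and $G$ preserves the translation structure on $\xi$. By contrast $\sigma \in \widehat{G}\setminus G$ sends $V$ to $-V$ and acts by $u \mapsto -u + b$ (so $\lambda = -1$), which is why $\widehat{G}$ preserves only the affine structure.

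The step I expect to be the main obstacle is the last one: verifying that the homothety flow $\R_{\bf H}$ induces the identity (hence $\lambda = 1$) on $\xi$, equivalently that for this flow the conformal factor $c_\Phi$ equals the $V$-scaling factor $\alpha_\Phi$. This is the point where one must use the explicit homothetic action in Brinkmann coordinates — or, intrinsically, the fact that ${\bf H}$ acts on $\z$ with exactly twice the weight it has on $\a$ — rather than a soft argument. Everything else reduces to the similarity property and to facts about the isometry group already established.
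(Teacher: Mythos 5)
Your argument is correct and follows the same route as the paper's one-line proof: a similarity sends $V$ to $\alpha V$ and hence $\omega=g(V,\cdot)$ to $(c/\alpha)\,\omega$ with constant factor, so it acts affinely on $\xi$. You additionally carry out the generator-by-generator check that $\lambda=1$ on $G$ (the translation-structure half), which the paper leaves implicit; your computation for $\R_{\bf H}$ in Brinkmann coordinates is the right way to settle that point.
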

\begin{proof}
This follows from the fact that any conformal transformation of $X$ sends $V$ to $\alpha V$, with $\alpha \in \R \smallsetminus \{0\}$, and acts on $g$ by similarities, hence   sends the $1$-form $\omega$ defining $\F$ to   $\lambda \omega$, with $\lambda \in \R \smallsetminus \{0\}$. 
\end{proof}

\textbf{Notation:} 
 We denote by $p: G \to \R_{\bf H} \times \R_{\bf L}$ and $r: G \to \R_{\bf H} \times \R_{\bf L} \times K$ the projection morphisms. We also introduce the following projection morphisms: $p_{\bf H}: G \to \R_{\bf H}$, $p_{\bf L}: G \to \R_{\bf L}$, and $p_K: G \to K$. Henceforth, we write $\Heis$ instead of $\Heis_{2n+1}$. 

Let $g \in \widehat{G}$. Then $g$ has a unique representation $g=(a,x)$, where $a \in \R_{\bf H} \times \R_{\bf L} \times K \subset \Aut(\Heis)$ and $x \in \Heis$. When $a=1$ in this representation, we  write $x$ instead of $(1,x)$. If $a(x)$ denotes the image of $x$ under $a$, then 
    $$(a_1,x_1)(a_2,x_2)=(a_1 a_2,\, x_1 \cdot a_1(x_2)),$$
where $\cdot$ denotes the multiplication in $\Heis$.

\section{Conformal compact quotients of $1$-connected homogeneous plane waves}

In \cite{Blau, article1}, non-flat $1$-connected homogeneous plane waves fall into two distinct families. Using the notation introduced above, the difference between these families lies in the action of ${\bf L}$ on the center of $\heis$: in one case, ${\bf L}$ acts trivially, while in the other, it acts non-trivially. 
The homogeneous plane waves of the first family are complete, whereas those of the second family are incomplete.\medskip

\textbf{Reduction:} Let $(X,V)$ be a $1$-connected non-flat homogeneous plane wave. As observed in \cite{conformal_Killing_PW} (see also \cite{Alekseevsky2024}), $X$ is conformal to a complete homogeneous plane wave. Therefore, when studying the essentiality of conformal groups of compact quotients, we may assume the model is complete. Henceforth, we assume that $X$ is complete, i.e., the action of ${\bf L}$ on the center of $\heis$ is trivial.

\begin{theorem}\label{Theorem 1}
A subgroup $\Gamma \subset G$ acting cocompactly and properly discontinuously on $X$ is 
\begin{itemize}
    \item either contained in the isometry group, 
    \item or it has a non-trivial projection on $\R_{\bf H}$, in which case its projection to $\R_{\bf L} \times \R_{\bf H}$ is isomorphic to $\Z$. In this case, up to finite index, $\Gamma \simeq \langle \hat{\gamma} \rangle \ltimes \Gamma_0$, where $\Gamma_0 \subset \Heis$ is abelian of rank $n+1$. 
\end{itemize}  
\end{theorem}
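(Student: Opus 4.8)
The plan is to exploit the semidirect structure $G=(\R_{\bf H}\times\R_{\bf L}\times K)\ltimes\Heis$ together with the dynamics of the homothety factor $\R_{\bf H}$. Since $X\cong\R^{n+2}$ is contractible and $K$ is compact while $(\R_{\bf H}\times\R_{\bf L})\ltimes\Heis$ is simply connected solvable, a properly discontinuous cocompact $\Gamma$ is virtually polycyclic, its intersection with the solvable radical has finite index, and (the quotient being a closed aspherical $(n+2)$-manifold) its Hirsch length equals $n+2$. First I would pass to this finite-index subgroup and record the projections $p_{\bf L}\colon\Gamma\to\R_{\bf L}$ and $p_{\bf H}\colon\Gamma\to\R_{\bf H}$. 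Recall that $\R_{\bf H}$ lies in the base-point stabilizer $H=(\R_{\bf H}\times K)\ltimes A^+$, hence acts trivially on the leaf space $\xi\cong\R$, while $\Gamma$ acts on $\xi$ through $p_{\bf L}$ by translations. The two cases of the theorem correspond precisely to $p_{\bf H}(\Gamma)=0$, whence $\Gamma\subset(\R_{\bf L}\times K)\ltimes\Heis=G_\rho$ is isometric, and $p_{\bf H}(\Gamma)\neq0$.

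The heart of the argument is a single rigidity statement: $\Gamma$ contains no nontrivial leaf-preserving homothety, i.e. no $\gamma$ with $p_{\bf L}(\gamma)=0$ and $p_{\bf H}(\gamma)\neq0$. Indeed, such a $\gamma$ fixes every leaf of $\F$ and normalizes the group $Z\!\cdot\!A^-$ acting transitively on an $(n+1)$-dimensional leaf, on which it therefore acts as an affine map; its linear part is $e^{s{\bf H}}k$ times a unipotent inner contribution, with $s=p_{\bf H}(\gamma)\neq0$, so all eigenvalues have modulus $e^{2s}$ on $\z$ or $e^{s}$ on $\a^-$, none equal to $1$. An affine map whose linear part omits the eigenvalue $1$ has a fixed point $x_0\in X$, and then $\gamma^k x_0=x_0$ for all $k$, so every neighbourhood of $x_0$ meets its images under the infinitely many distinct $\gamma^k$ — contradicting proper discontinuity. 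Consequently $p_{\bf H}$ vanishes on $\ker p_{\bf L}$, so it factors through $p_{\bf L}(\Gamma)\subset\R$; thus $p(\Gamma)$ is the graph of a homomorphism over a rank-one subgroup of $\R$, and since $\Gamma$ is finitely generated, $p(\Gamma)\cong\Z$ whenever $p_{\bf H}(\Gamma)\neq0$. Write $\hat\gamma$ for a generator, so $p(\Gamma)=\langle p(\hat\gamma)\rangle$.

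It remains to identify the kernel. Put $\Gamma_0=\Gamma\cap\Heis$, a normal subgroup. Because $X$ is complete, ${\bf L}$ acts trivially on $\z$, and $K$ does too, so conjugation by $\hat\gamma$ scales the center $Z$ by $e^{2s_0}$ with $s_0=p_{\bf H}(\hat\gamma)\neq0$; as $Z$ is characteristic and $\hat\gamma$ preserves $\Gamma_0$, a discrete subgroup of $Z\cong\R$ scaled by $e^{2s_0}\neq1$ must be trivial, forcing $\Gamma_0\cap Z=\{0\}$. Hence $[\Gamma_0,\Gamma_0]\subset\Gamma_0\cap[\Heis,\Heis]=\Gamma_0\cap Z=\{0\}$, so $\Gamma_0$ is abelian; note that its rational span may still contain $\z$, so $\Gamma_0$ is a lattice transverse to the center inside a maximal abelian subgroup $Z\!\cdot\!A^-\cong\R^{n+1}$, on which $\hat\gamma$ acts by a hyperbolic automorphism. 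Comparing Hirsch lengths, $n+2=1+\mathrm{rank}\,\Gamma_0$ (the compact factor $K$ contributing nothing), so $\Gamma_0\cong\Z^{n+1}$, and $\Gamma\cong\langle\hat\gamma\rangle\ltimes\Gamma_0$ up to finite index.

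I expect two points to require the most care. The first, and conceptual, obstacle is the fixed-point lemma: making precise that a leaf-preserving element acts affinely with the asserted linear part, and that proper discontinuity is genuinely violated — this is where completeness and the explicit form of ${\bf H}$ enter decisively. The second is bookkeeping with the compact factor: a priori $\ker(p|_\Gamma)=\Gamma\cap(K\ltimes\Heis)$ may have infinite projection to $K$, and one must show, by passing to a finite-index subgroup via the syndetic hull of $\Gamma$ inside the solvable radical, that it reduces to $\Gamma_0\subset\Heis$ without altering the dichotomy; the same syndetic-hull input secures finite generation and the Hirsch-length identity used above.
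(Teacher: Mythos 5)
There is a genuine gap at the pivotal step where you conclude that $p(\Gamma)\cong\Z$. From the fixed-point rigidity (no $\gamma$ with $p_{\bf L}(\gamma)=0$ and $p_{\bf H}(\gamma)\neq 0$ — which is indeed the paper's Corollary \ref{Corollary 1}) you only get that $p(\Gamma)$ is the graph of a homomorphism over $p_{\bf L}(\Gamma)\subset\R$. But a finitely generated subgroup of $\R$ can have any finite rank; nothing you say forces it to be rank one, i.e.\ discrete. The cases you thereby skip — $\overline{p(\Gamma)}$ equal to $\R^2$, to $\R\times\Z$ or $\Z^2$, or to a line meeting both factors nontrivially with $p(\Gamma)$ dense in it — are precisely where the paper does the real work (Propositions \ref{Proposition: p(Gamma)=RxR}, \ref{Proposition: p(Gamma)=R}, \ref{Proposition: p(Gamma)=RxZ ou ZxZ}). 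None of these is excluded by your fixed-point lemma, and excluding them requires different tools: when the projection to $\R_{\bf L}$ is dense one produces $\gamma\in\Gamma$ with $\Ad_\gamma$ a contraction of $\heis$, which by discreteness forces $\Gamma\cap(K\ltimes\Heis)=\{1\}$ and then a cohomological-dimension contradiction (Corollary \ref{Corollary: sequence converging to 1} and Proposition \ref{Proposition: p(Gamma)=RxZ ou ZxZ}); when $\overline{p(\Gamma)}$ is $\R^2$ or a line, one needs the nilpotent syndetic hull (Lemma \ref{Lemma appendix}) and the fact that a cocompact such hull must contain the center of $\Heis$, which is incompatible with a nontrivial $\R_{\bf H}$-component acting on $Z$ by a nontrivial scaling. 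Your Hirsch-length bookkeeping at the end presupposes the conclusion of these propositions rather than proving it.

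A second, smaller gap is the assertion that $\Gamma$ meets the solvable radical in finite index (equivalently, that the projection to $K$ is finite), which you state at the outset and then flag at the end as "requiring care". The syndetic hull alone does not give this: the paper closes it by observing that $\Gamma\cap(K\ltimes\Heis)$ acts properly and cocompactly on $K\ltimes\Heis/K\ltimes A^+\cong\R^{n+1}$ as an affine group with virtually nilpotent holonomy, and then invoking Fried--Goldman--Hirsch (\cite[Theorem A]{fried1981affine}): a compact complete affine manifold with nilpotent holonomy has unipotent linear holonomy, so the $K$-part is trivial. The remaining ingredients of your sketch — the fixed-point argument for leaf-preserving homotheties, the triviality of $\Gamma_0\cap Z$ under a nontrivial $\R_{\bf H}$-scaling and the resulting abelianness of $\Gamma_0$ — do match the paper's Lemmas \ref{Lemma fixed point: (A,b)-->(A,0)} and \ref{Lemma: Gamma does not intersect the center} and are correct.
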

\subsection{Introductory lemmas} 
We begin by stating a few introductory (easy) lemmas that will be used many times throughout the proofs. \\

Let $f=(a,x) \in \Aff(\R^m)$, where $a \in \GL_m(\R)$  is its linear part and $x$ is the translation part. It is easy to see that if $a$ does not have $1$ as an eigenvalue, then $f$ has a unique fixed point. Let $y \in \R^m$ be this fixed point. Then, conjugation by the translation in $y$ results in an affine transformation that fixes $0$. 
This is, in fact, also true when replacing $\R^m$ with $N$, where $N$ is a  $1$-connected $2$-step nilpotent group. We provide a proof of this in the next lemma.

\begin{lemma}\label{Lemma fixed point: (A,b)-->(A,0)}
Let $(a,x) \in \Aut(N) \ltimes N$ such that $a$ has no eigenvalue  equal to $1$. Then, there exists $x_1 \in N$ such that $x_1 (a,x) x^{-1}_1=(a,1)$. 
\end{lemma}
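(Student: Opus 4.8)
\emph{Plan.} The first step is to turn the conjugation into an equation in $N$. Writing $x_1$ for $(1,x_1)$ and using the product rule $(a_1,x_1)(a_2,x_2)=(a_1a_2,\,x_1\cdot a_1(x_2))$, one computes
\[
x_1\,(a,x)\,x_1^{-1}=\bigl(a,\; x_1\cdot x\cdot a(x_1)^{-1}\bigr).
\]
Hence $x_1(a,x)x_1^{-1}=(a,1)$ is equivalent to $x_1^{-1}a(x_1)=x$, i.e. to solvability at the point $x$ of the map $\phi\colon N\to N$, $\phi(y)=y^{-1}a(y)$. In the abelian model $N=\R^m$ this reads $(a-\mathrm{Id})x_1=x$, which is solvable precisely because $a-\mathrm{Id}$ is invertible; the goal is to run the analogous argument in the nilpotent setting.

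Since $N$ is $1$-connected and nilpotent, $\exp\colon\n\to N$ is a diffeomorphism, and I will transport $\phi$ to $\n$. Let $A=d a\in\Aut(\n)$, so that $a(\exp u)=\exp(Au)$; the hypothesis that $a$ has no eigenvalue $1$ means exactly that $A-\mathrm{Id}$ is invertible on $\n$. Because $N$ is $2$-step, the Baker--Campbell--Hausdorff formula truncates, giving $\exp(u)\exp(w)=\exp\!\bigl(u+w+\tfrac12[u,w]\bigr)$; applying this to $\phi(\exp u)=\exp(-u)\exp(Au)$ turns $\phi(x_1)=x$, with $x=\exp\xi$, into the single equation in $\n$
\[
(A-\mathrm{Id})u-\tfrac12[u,Au]=\xi .
\]
The key structural observation is that the quadratic term $[u,Au]$ lies in $\z:=[\n,\n]$, which for a $2$-step algebra is central, and that $\z$ is $A$-invariant.

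Now solve the equation in two stages. Passing to $\bar\n=\n/\z$ kills the quadratic term, leaving $(\bar A-\mathrm{Id})\bar u=\bar\xi$; since $A$ preserves $\z$, the eigenvalues of $A$ split as those on $\z$ together with those on $\bar\n$, so $\bar A-\mathrm{Id}$ is invertible and $\bar u$ is uniquely determined. Lift to a particular $u_0\in\n$; any solution is then $u=u_0+w$ with $w\in\z$. Because $w$ and $Aw$ are central, $[u_0+w,A(u_0+w)]=[u_0,Au_0]$, so substituting reduces the equation to $(A-\mathrm{Id})w=-\eta$ where $\eta:=(A-\mathrm{Id})u_0-\tfrac12[u_0,Au_0]-\xi\in\z$. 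As $A-\mathrm{Id}$ is invertible on $\z$, this has a unique solution $w\in\z$, and $x_1:=\exp(u_0+w)$ is the desired element.

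The only real subtlety is the cross term in the quadratic expression, and the point of the two-stage scheme is precisely that it decouples it: $[u,Au]$ is central, hence invisible modulo $\z$ and constant along the affine fibre $u_0+\z$, which is what allows the correction inside the centre to be carried out by a second, independent application of the invertibility of $A-\mathrm{Id}$. Since each stage has a unique solution, the element $x_1$ produced is in fact unique, matching the affine picture.
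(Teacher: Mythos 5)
Your proof is correct and follows essentially the same two-stage scheme as the paper's: solve the linearized equation modulo the centre using invertibility of $a-\mathrm{Id}$ on the abelianization, then correct by a central element using invertibility on the centre. The only difference is presentational — you transport the problem to the Lie algebra via $\exp$ and the truncated Baker--Campbell--Hausdorff formula, whereas the paper carries out the same two steps directly in the group.
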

\begin{proof}
We have $$x_1 (a,x)\, x^{-1}_1=(a,x_1 x\, a(x_1^{-1})).$$ In the abelian case, the problem amounts to finding $x_1$ such that $(a-\Id)(x_1)=x$. This equation admits a solution since the assumption that  $a$ has no eigenvalue equal to $1$ ensures that $a-\Id$ is invertible. 
Now, consider the projection morphism $p: N \to N/C(N)$, where $C(N)$ is the center of $N$. Denote by $\overline{x}:=p(x)$ for $x \in N$, and by $\overline{a}$ the induced action  of $a \in \Aut(N)$ on $N/C(N)$.  Projecting the equation above onto $N/C(N)$, which is abelian, we obtain the existence of $\overline{x_1} \in N/C(N)$ such that  $\overline{x_1}\, \overline{x}\,\overline{a}(\overline{x_1}^{-1})=\overline{0}$. 
Then, we have $x_1 x\, a(x_1^{-1})=z$, for some $z \in C(N)$.  There exists $z_1 \in C(N)$ such that $(\Id-a)(z_1)=z^{-1}$.
Setting $x_2:=x_1 z_1$, we compute  $x_2 x\, a(x_2^{-1})= x_1 x \,a(x_1^{-1}) z_1 a(z_1^{-1})=1$. Thus, $x_2$ satisfies the required condition. 
\end{proof}

\begin{corollary}\label{Corollary 1}
Let $g \in G$  such that $p_{\bf L}(g)=1$ and $p_{\bf H}(g) \neq 1$, then $g$ has a fixed point.
\end{corollary}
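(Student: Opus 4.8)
The plan is to reduce the statement to Lemma~\ref{Lemma fixed point: (A,b)-->(A,0)} by checking that the linear part of $g$ has no eigenvalue equal to $1$, and then to exhibit the fixed point via the homogeneous description $X=\widehat{G}/I'$.

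First I would write $g=(a,x)$ with $a\in\R_{\bf H}\times\R_{\bf L}\times K\subset\Aut(\Heis)$ and $x\in\Heis$. Since $p_{\bf L}(g)=1$, the $\R_{\bf L}$-component of $a$ is trivial, so $a=e^{s{\bf H}}k$ for some $k\in K\subset\SO(n)$ and some $s\in\R$; the hypothesis $p_{\bf H}(g)\neq 1$ forces $s\neq 0$. Next I would compute the eigenvalues of $a$ acting on $\heis=\a^+\oplus\a^-\oplus\z$. As $e^{s{\bf H}}$ acts by the scalar $e^{s}$ on $\a^+\oplus\a^-$ and by $e^{2s}$ on $\z$, while $k$ preserves this decomposition, acting orthogonally on $\a^+$ and $\a^-$ and trivially on $\z$, the eigenvalues of $a$ all have modulus $e^{s}$ on $\a^+\oplus\a^-$ and equal $e^{2s}$ on $\z$. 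Since $s\neq 0$, none of them equals $1$.

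This eigenvalue computation is the only real content of the argument, and the step I would treat most carefully, since it is precisely the hypothesis required by Lemma~\ref{Lemma fixed point: (A,b)-->(A,0)}; it is however immediate from the orthogonality of $K$ together with $s\neq 0$. With it in hand, I would apply the lemma with $N=\Heis$ (a $1$-connected $2$-step nilpotent group) to obtain $x_1\in\Heis$ with $x_1\,g\,x_1^{-1}=(a,1)$.

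Finally, realizing $X=\widehat{G}/I'$ with base point $o=eI'$, I would observe that $(a,1)$ lies in the isotropy group $I'=(\R_{\bf H}\times\langle\sigma\rangle\times\widehat{K})\ltimes A^+$, because $a\in\R_{\bf H}\times K\subset\R_{\bf H}\times\widehat{K}$; hence $(a,1)$ fixes $o$. Therefore $g=x_1^{-1}(a,1)x_1$ fixes the point $x_1^{-1}\cdot o$, which is the desired fixed point. I do not expect any genuine obstacle here: the substance is the eigenvalue check, and the localization of the fixed point is just the transport of the base point by $x_1^{-1}$.
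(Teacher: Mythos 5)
Your argument is correct and follows essentially the same route as the paper: verify that the linear part $a=e^{s\bf H}k$ has no eigenvalue equal to $1$ on $\heis$ (the paper phrases this via the real eigenvalues of $p_K(g)$ and the eigenvalues of $p_{\bf H}(g)$), apply Lemma~\ref{Lemma fixed point: (A,b)-->(A,0)} to conjugate $g$ into $(a,1)$, and conclude since $a$ fixes a point of $X$. Your version merely spells out the eigenvalue computation and the transport of the base point in slightly more detail.
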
 
\begin{proof}
Since the real eigenvalues of $p_K(g)$ are equal to $1$, and the eigenvalues of $p_{\bf H}(g)$ are all real and different from $1$,  Lemma \ref{Lemma fixed point: (A,b)-->(A,0)} ensures the existence of $x \in \Heis$ such that $x g\, x^{-1}=(a,1)$, where $a=r(g) \in \R_{\bf H} \times K$. Since $a$ has a fixed point in $X$, $g$ also has a fixed point in $X$. 
\end{proof}

\begin{lemma}\label{Lemma: eigenvalues > 1 implies homothetic}
Let $a \in \Aff(\R^m)$ be such that all its eigenvalues have norm $<1$. Then, the action of $a$ on $\R^m$ is a contraction, i.e. for any $x \in \R^m$, the sequence of forward iterations $(a^k(x))_{k \in \N}$ converges to $0$. 
\end{lemma}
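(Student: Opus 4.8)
The plan is to reduce to the linear case and then invoke the standard spectral radius estimate. First, I would make explicit that the eigenvalues of $a=(A,b)\in\Aff(\R^m)$ mean the eigenvalues of its linear part $A\in\GL_m(\R)$. Since all of these have modulus $<1$, in particular $1$ is not among them, so $A-\Id$ is invertible and $a$ has a unique fixed point $y\in\R^m$; this is precisely the abelian case of Lemma \ref{Lemma fixed point: (A,b)-->(A,0)} applied with $N=\R^m$. Conjugating $a$ by the translation sending $y$ to $0$, I may assume $y=0$, i.e. that $a$ is the linear map $x\mapsto Ax$. The forward orbit of $x$ is then $(A^k x)_{k}$, and the claim becomes $A^k x\to 0$ for every $x$, equivalently $\|A^k\|\to 0$ in operator norm.

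The core step is the fact that a matrix with spectral radius $\rho(A):=\max_i |\lambda_i|<1$ satisfies $\|A^k\|\to 0$. I would deduce this from Gelfand's formula $\rho(A)=\lim_k \|A^k\|^{1/k}$: choosing $\epsilon>0$ with $\rho(A)+\epsilon<1$, there is $k_0$ with $\|A^k\|\le (\rho(A)+\epsilon)^k$ for all $k\ge k_0$, so $\|A^k\|\to 0$ geometrically. Alternatively, one can put $A$ in complex Jordan form and estimate each block $J=\lambda I+N$ directly, using $\|J^k\|\le \sum_j \binom{k}{j}|\lambda|^{k-j}\|N\|^j$ together with $|\lambda|<1$ to control the polynomial-times-geometric terms. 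Either route yields $a^k(x)=A^k x\to 0$ for every $x$, which is the assertion.

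There is essentially no serious obstacle here, as the result is a soft consequence of $\rho(A)<1$. The only points requiring a little care are the two interpretive ones: first, that ``eigenvalues of $a$'' refers to the linear part (the homogeneous-coordinate presentation of an affine map always carries an extra eigenvalue $1$, which the hypothesis must exclude); and second, the mild abuse in the phrase ``converges to $0$'', since the orbit in fact converges to the unique fixed point of $a$, normalized to the origin by the conjugation above. As conjugation by a translation does not affect whether orbits converge, stating the limit as $0$ is harmless once the fixed point is placed at the origin.
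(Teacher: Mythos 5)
Your proof is correct and, in its Jordan-block variant, coincides with the paper's own argument, which simply estimates the entries of $(J(\lambda))^k$ as $\lambda^{k-j}P_j(k)$ with $P_j$ polynomial and lets $k\to\infty$; the Gelfand-formula route you propose first is an equivalent standard alternative. You are in fact slightly more careful than the paper, which passes directly to Jordan blocks of the linear part without comment: your preliminary reduction to the fixed point via the abelian case of Lemma \ref{Lemma fixed point: (A,b)-->(A,0)} is what makes the statement's ``converges to $0$'' literally correct for a genuinely affine $a$.
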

\begin{proof}
Let $J(\lambda)$ be a Jordan bloc of $a$ of dimension $d$, where $\lambda$ is the corresponding (complex) eigenvalue. The non-zero entries of $(J(\lambda))^k$ are of the form $$\lambda^{k-j} P_j(k), \text{\;\;\,for\;} j \in \{0,\dots,d-1\},$$ where  $P_j(k)$ are polynomials. As $k \to +\infty$, the norms of these entries tend to zero. 
\end{proof}

\begin{corollary}\label{Corollary: sequence converging to 1}
Let $\Lambda$ be a subgroup of $G$. Suppose there exists  $(a,x) \in \Lambda$ such that $\Ad_a \in \Aut(\heis)$ has all its eigenvalues with norm $<1$. If $\Lambda$ is discrete, then $\Lambda \cap (K \ltimes \Heis) = \{1\}$.    
\end{corollary}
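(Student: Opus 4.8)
The plan is to use the contracting element $(a,x)$ to drag any element of $\Lambda\cap(K\ltimes\Heis)$ to the identity by conjugation, which a discrete group cannot tolerate unless that element was already (partly) trivial; this is the phenomenon recorded in the name of the statement. First I would normalise the contracting element. Since all eigenvalues of $\Ad_a$ have norm $<1$, none of them equals $1$, so Lemma~\ref{Lemma fixed point: (A,b)-->(A,0)} (applied to $N=\Heis$) provides $x_1\in\Heis$ with $x_1(a,x)x_1^{-1}=(a,1)$. Conjugation by $x_1\in\Heis$ is a homeomorphism of $G$ preserving discreteness, and it maps $K\ltimes\Heis$ onto itself because it leaves the $K$-component of an element unchanged; so after replacing $\Lambda$ by $x_1\Lambda x_1^{-1}$ I may assume the contracting element is $g=(a,1)$.

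Next I would compute the conjugation orbit of an arbitrary $h=(b,y)\in\Lambda\cap(K\ltimes\Heis)$, with $b\in K$ and $y\in\Heis$. Writing $c:=p_K(a)$ and using that the three factors $\R_{\bf H},\R_{\bf L},K$ commute in $\Aut(\Heis)$ (so the $\R_{\bf H}$- and $\R_{\bf L}$-parts of $a^k$ pass through $b$ and cancel), the group law gives
$$g^k h g^{-k}=\bigl(c^k b c^{-k},\,a^k(y)\bigr),\qquad k\in\Z.$$
Since $\Ad_a$ has spectral radius $<1$, the automorphism $a$ of $\Heis$ is a contraction: reading Lemma~\ref{Lemma: eigenvalues > 1 implies homothetic} in exponential coordinates on $\Heis$ yields $a^k(y)\to 1$ as $k\to+\infty$, while $c^k b c^{-k}$ stays in the compact group $K$. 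Hence the forward orbit $\{g^k h g^{-k}:k\in\N\}$ is relatively compact in $G$, and being contained in the discrete group $\Lambda$ it must be finite.

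From this finiteness I would extract $y=1$: the second coordinates $a^k(y)$ take only finitely many values yet converge to $1$, so $a^k(y)=1$ for all large $k$, and injectivity of $a$ forces $y=1$. Consequently every element of $\Lambda\cap(K\ltimes\Heis)$ lies in $K$, that is $\Lambda\cap(K\ltimes\Heis)=\Lambda\cap K$, a discrete subgroup of the compact group $K$ and therefore finite.

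The step I expect to be the genuine obstacle is the last one, promoting \emph{finite} to \emph{trivial}. The contraction only controls the Heisenberg (translation) coordinate $y$; it gives no leverage against a rotation $b\in K$ commuting with $a$, which would produce a perfectly discrete orbit. Eliminating such a $b$ requires torsion-freeness of $\Lambda$, which is exactly what the freeness of the action furnishes in the intended application, since a torsion-free finite group is trivial. I would therefore make sure this hypothesis is in force (or is part of the standing assumptions on $\Lambda$), after which $\Lambda\cap(K\ltimes\Heis)=\{1\}$ follows.
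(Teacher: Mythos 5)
Your argument is essentially the paper's proof: normalize the contracting element to $(a,1)$ via Lemma~\ref{Lemma fixed point: (A,b)-->(A,0)}, compute $g^khg^{-k}=(a^kba^{-k},\,a^k(y))$, and combine compactness of $K$ with the contraction from Lemma~\ref{Lemma: eigenvalues > 1 implies homothetic} to force $y=1$ by discreteness. Your closing reservation is well placed but is not a divergence from the paper --- its proof also stops at $x_1=1$, so the literal conclusion $\Lambda\cap(K\ltimes\Heis)=\{1\}$ (as opposed to ``finite and conjugate into $K$'') likewise relies on the freeness/torsion-freeness available in every application of the corollary, or only the weaker consequence $\Lambda\cap\Heis=\{1\}$ is actually used.
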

\begin{proof}
Write $g=(a,x)$, and let $h=(a_1,x_1) \in \Lambda \cap (K \ltimes \Heis)$. By Lemma \ref{Lemma fixed point: (A,b)-->(A,0)}, we may assume, after conjugation by an element of $\Heis$, that $x=1$.    
Then $g^k h g^{-k} = (a^k a_1 a^{-k}, a^k(x_1))$. Since $a^k a_1 a^{-k}$ is a sequence in $K$, which is compact, it has an accumulation point. Moreover, by Lemma \ref{Lemma: eigenvalues > 1 implies homothetic}, the sequence $a^k(x_1)$ converges to $1$. Since $\Lambda$ is discrete, the set $\{a^k(x_1), k \in \N\}$ must be finite, which forces $x_1=1$.   
\end{proof}

\begin{lemma}\label{Lemma: Gamma does not intersect the center}
Let $\Lambda$   be a subgroup of $G$ which is not contained in the isometry group $G_\rho$. If $\Lambda$ is discrete, then   $\Lambda \cap Z = \{1\}$. 
\end{lemma}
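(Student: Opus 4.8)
The plan is to argue by contradiction: assuming $\Lambda \cap Z \neq \{1\}$, I will produce an infinite sequence of elements of $\Lambda$ accumulating at the identity, contradicting discreteness. The argument is structurally the same as that used in Corollary \ref{Corollary: sequence converging to 1}. The first step is to reinterpret the hypothesis $\Lambda \not\subset G_\rho$. In the complete case, $G_\rho = (\R_{\bf L} \times K) \ltimes \Heis$ is exactly the kernel of $p_{\bf H}$ inside $G$, so $\Lambda \not\subset G_\rho$ means precisely that some $\gamma = (a,x) \in \Lambda$ has nontrivial $\R_{\bf H}$-projection, say $p_{\bf H}(\gamma) = e^{s{\bf H}}$ with $s \neq 0$; replacing $\gamma$ by $\gamma^{-1}$ if necessary, I may assume $s > 0$.

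Next, suppose $z = (1,z_0) \in \Lambda \cap Z$ with $z_0 \neq 1$, identifying $Z \cong \R$ with the center of $\Heis$. The key computation is the conjugate $\gamma z \gamma^{-1}$. Using the multiplication rule $(a_1,x_1)(a_2,x_2)=(a_1 a_2,\, x_1 \cdot a_1(x_2))$, together with the fact that the automorphism $a = r(\gamma)$ preserves $Z$ while $Z$ is central in $\Heis$, the conjugation collapses to $\gamma z \gamma^{-1} = (1,\, a(z_0))$. It then remains to read off how $a$ scales $Z$. Writing $a = p_{\bf H}(\gamma)\,p_{\bf L}(\gamma)\,p_K(\gamma)$, the factor $p_K(\gamma) \in K$ fixes the center pointwise, the factor $p_{\bf L}(\gamma)$ acts trivially on $\z$ by the completeness assumption, and $p_{\bf H}(\gamma)=e^{s{\bf H}}$ acts on $\z$ by the scalar $e^{2s}$ since ${\bf H}$ has eigenvalue $2$ on $\z$. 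Hence $a(z_0) = e^{2s}z_0$.

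Iterating yields $\gamma^k z \gamma^{-k} = (1,\, e^{2ks}z_0)$ for every $k \in \Z$, all of which lie in $\Lambda$. Because $s>0$ and $z_0 \neq 0$, these elements are pairwise distinct, and letting $k \to -\infty$ gives $e^{2ks}z_0 \to 0$, so they converge to the identity $(1,1)$. This contradicts the discreteness of $\Lambda$, forcing $z_0 = 0$, i.e. $\Lambda \cap Z = \{1\}$.

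I do not expect a serious obstacle: the only point needing care is the bookkeeping in the conjugation, which simplifies drastically precisely because $z_0$ is central and is preserved by $a$. The one genuinely essential input is the completeness hypothesis, which guarantees that ${\bf L}$ acts trivially on $\z$, so that the \emph{only} scaling on $Z$ comes from the homothety factor $\R_{\bf H}$; it is exactly this nontrivial scaling (coming from $\Lambda \not\subset G_\rho$) that makes the sequence non-stationary and drives the accumulation.
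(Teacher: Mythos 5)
Your proof is correct and follows essentially the same route as the paper: conjugate a central element by an element with nontrivial $\R_{\bf H}$-projection, note that only the $\R_{\bf H}$-factor scales the center (since $K$ and, by completeness, $\R_{\bf L}$ act trivially on $\z$), and let the resulting geometric sequence accumulate at the identity to contradict discreteness. Your version merely spells out the conjugation bookkeeping and the eigenvalue $2$ of ${\bf H}$ on $\z$ more explicitly than the paper does.
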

\begin{proof} 
Let $g \in \Lambda$ have a non-trivial projection on $\R_{\bf H}$, and let $z \in \Lambda \cap Z$. 
We have $g^k z g^{-k}=a^k(z)$, where $a=r(g)$. Since the actions of  $\R_{\bf L}$ and $K$ on the center of $\Heis$ are trivial, and the action of $\R_{\bf H}$ is contracting (up to forward or backward iteration), it follows that the sequence  $(a^k(z))_{k \in \Z}$ has an accumulation point at $1$. As $\Lambda$ is discrete, the set $\{a^k(z), k \in \N\} \subset \Lambda$ must be finite, which forces $z=1$.       
\end{proof}

\begin{lemma}$($\cite[Lemma 5.3]{article1}$)$\label{Lemma product R x K}
Let $Q$ be the semi-direct product $Q=\R\ltimes C$ where $C$ is a compact connected Lie group. Then $Q$ is isomorphic to the product $\R\times C$. \medskip
\end{lemma}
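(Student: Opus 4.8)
The plan is to exploit the classical structure of the automorphism group of a compact connected Lie group in order to untwist the semidirect product. Writing $Q=\R\ltimes_\phi C$, the extension datum is a smooth homomorphism $\phi:\R\to\Aut(C)$, $t\mapsto\phi(t)$, with group law $(s,a)(t,b)=(s+t,\,a\,\phi(s)(b))$. The goal is to replace the given section $\R\to Q$ by a central one, i.e.\ to produce a one-parameter subgroup of $Q$ that commutes with the copy of $C$, which forces the product to split as a direct product.

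First I would recall the structural fact that for a compact connected Lie group $C$ the group $\Aut(C)$ is a Lie group whose identity component is the group $\mathsf{Inn}(C)\cong C/Z(C)$ of inner automorphisms: among the derivations of $\c$ only the inner ones $\ad(\c)$ integrate to automorphisms of $C$, since any automorphism must preserve the integral lattice of the central torus, so that the central (abelian) directions of $\Der(\c)$ do not contribute to $\Aut(C)^0$. Since $\R$ is connected, its image $\phi(\R)$ lies in $\Aut(C)^0=\mathsf{Inn}(C)$, so $\phi$ is a one-parameter subgroup of $\mathsf{Inn}(C)=C/Z(C)$.

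Next I would lift $\phi$ to $C$ itself. Differentiating at the origin yields a vector $\bar\xi\in\Lie(\mathsf{Inn}(C))=\c/\z$; choosing any preimage $\xi\in\c$ and setting $\tilde\phi(t):=\exp(t\xi)$ gives a one-parameter subgroup of $C$ which projects onto $\phi$, because $\exp$ commutes with the quotient map $C\to C/Z(C)$. Thus $\phi(t)$ is conjugation by $\tilde\phi(t)$. With this lift in hand I would define $F:\R\times C\to Q$ by $F(t,c)=(t,\,c\,\tilde\phi(t)^{-1})$. Using $\phi(s)(b)=\tilde\phi(s)\,b\,\tilde\phi(s)^{-1}$ and the homomorphism property of $\tilde\phi$, a direct computation gives $F(s,a)\,F(t,b)=(s+t,\,ab\,\tilde\phi(s+t)^{-1})=F(s+t,ab)$, so $F$ is a homomorphism out of the \emph{direct} product $\R\times C$; it is smooth and bijective with inverse $(t,y)\mapsto(t,y\,\tilde\phi(t))$, hence an isomorphism $\R\times C\cong Q$.

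The only genuine obstacle is the structural input $\Aut(C)^0=\mathsf{Inn}(C)$ together with the lifting of the resulting one-parameter subgroup of $C/Z(C)$ back to $C$; this is precisely where compactness and connectedness of $C$ enter, and where one must be careful that the abelian central directions, although they are derivations of $\c$, do not enlarge $\Aut(C)^0$. Once the action is recognized as inner and the lift $\tilde\phi$ is constructed, the verification that $F$ is an isomorphism is entirely routine.
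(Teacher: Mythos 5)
Your argument is correct: the paper itself gives no proof of this lemma (it is quoted from \cite[Lemma 5.3]{article1}), and your route---$\Aut(C)^0=\mathsf{Inn}(C)$ for $C$ compact connected, lift the resulting one-parameter group of inner automorphisms through $C\to C/Z(C)$ via the exponential, then untwist with $F(t,c)=(t,c\,\tilde\phi(t)^{-1})$---is the standard proof of this fact and the computations check out. The only point worth polishing is the justification that $\Lie(\Aut(C))=\ad(\mathfrak c)$: the clean statement is that any derivation kills the center of $\mathfrak c$ because $\Aut$ of a torus is discrete, and derivations of the semisimple part are inner, which is what you say in substance.
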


The last lemma we will need in this section is Lemma \ref{Lemma appendix} below. It is similar to \cite[Proposition 5.4]{article2_crelle}, but we present it here in a form that allows for systematic application. The proof however is the same, so to avoid unnecessary lengthening of the paper, we will not rewrite the proof. 

Let $Q \ltimes C \subset \Aut(\Heis)$ preserving the decomposition $\heis=\a \oplus \z$, and such that $C$ is compact. Define $$G :=(Q\ltimes C)\ltimes N.$$ Let $\Gamma$ be a discrete subgroup of $G$.
Under some conditions, the proposition below ensures the existence of a nilpotent syndetic hull of $\Gamma$, i.e. a connected nilpotent subgroup of $G$ containing $\Gamma$ as a lattice.

\begin{lemma}\label{Lemma appendix}
Let $\Gamma$ be a discrete subgroup of $G$, whose projection on $Q$ is dense. Then, up to finite index, $\Gamma$ is a lattice in a connected closed nilpotent subgroup of $G$.    
\end{lemma}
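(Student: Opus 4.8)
The plan is to build the nilpotent syndetic hull in two stages—first along the normal nilpotent factor $N$, then along the projection to $Q\times C$—and to use the density hypothesis precisely to force the action tying the two stages together to be unipotent. First I would pass to a finite-index subgroup of $\Gamma$ so that the relevant closures inside the compact group $C$ are connected, and set $\Gamma_N:=\Gamma\cap N$. Since $N=\Heis$ is a $1$-connected $2$-step nilpotent group, Mal'cev theory provides a unique connected, simply connected nilpotent subgroup $\hat N\le N$ in which $\Gamma_N$ sits as a cocompact lattice (its Mal'cev hull). By uniqueness, $\hat N$ is preserved by every automorphism of $N$ normalising $\Gamma_N$; in particular $\Gamma$, acting through $Q\ltimes C\subset\Aut(\Heis)$, normalises $\hat N$, so $\hat N$ will be the normal nilpotent part of the hull.

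Next I would analyse the projection $r\colon G\to Q\ltimes C$. Using Lemma \ref{Lemma product R x K} to identify $Q\ltimes C\cong Q\times C$, I consider the closure $A:=\overline{r(\Gamma)}$. Because $C$ is compact the projection $Q\times C\to Q$ is proper, hence a closed map, so the image of the closed subgroup $A$ in $Q$ is both closed and, by hypothesis, dense, i.e.\ all of $Q$. Thus the identity component $A^0$ surjects onto the connected group $Q$, while $A\cap C$ is a compact subgroup; as the identity component of the closure of a finitely generated group projecting onto the abelian $Q$ with relatively compact fibres, $A^0$ is (after the finite-index reduction) a connected abelian group surjecting onto $Q$ with toral kernel. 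The candidate hull is then the connected subgroup $H\le G$ with $r(H)=A^0$ and $H\cap N=\hat N$, realised as an extension of $A^0$ by $\hat N$; here the density of $r(\Gamma)$ in $Q$ is what lets each one-parameter subgroup of $A^0$ be lifted consistently into $G$ via the $N$-coordinates of nearby elements of $\Gamma$.

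The main obstacle is to show that $H$ is nilpotent, equivalently that $A^0$ acts on $\hat N$ by \emph{unipotent} automorphisms and that its toral part acts trivially there. This is exactly where discreteness and density interact. Suppose $A^0$ acted on some nonzero subspace of $\heis$ meeting $\hat N$ with an eigenvalue of modulus $\ne 1$, or with a nontrivial rotation coming from the compact factor. Approximating such an element $a\in A^0$ by elements $r(\gamma)$ with $\gamma\in\Gamma$—possible since $r(\Gamma)$ is dense in $Q$ and $C$ is compact—and iterating the conjugation action would contract or rotate the $N$-parts exactly as in Lemma \ref{Lemma: eigenvalues > 1 implies homothetic}, Corollary \ref{Corollary: sequence converging to 1}, and Lemma \ref{Lemma: Gamma does not intersect the center}, producing infinitely many distinct elements of $\Gamma$ accumulating at a point and contradicting discreteness. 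Hence the action on $\hat N$ is unipotent and the toral part is trivial on it, so $H=A^0\ltimes\hat N$ is nilpotent.

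Finally I would verify that a finite-index subgroup of $\Gamma$ is a lattice in $H$. By the first stage $\Gamma_N$ is cocompact in $\hat N=H\cap N$; modulo $\hat N$ the group $\Gamma$ maps into $A^0=H/\hat N$ with relatively compact image by construction. Note there is no tension with the density hypothesis: the image of $\Gamma$ in $Q$ may well be dense while $\Gamma$ stays cocompact in $H$, the extra rank being absorbed by the $\hat N$-directions, exactly as a lattice in a Heisenberg group maps densely into a line inside its abelianisation. Combining the two cocompactness statements through the extension $\hat N\to H\to A^0$ shows that $\Gamma\backslash H$ is compact, so $\Gamma$ is a lattice in the connected closed nilpotent subgroup $H$, as required. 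I expect the nilpotency step above to be the genuine crux; the remaining steps are the bookkeeping that the cited argument of \cite[Proposition 5.4]{article2_crelle} carries out in full.
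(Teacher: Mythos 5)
Note first that the paper does not prove Lemma \ref{Lemma appendix} in the text: it defers entirely to \cite[Proposition 5.4]{article2_crelle}, so there is no internal argument to compare yours with; judged on its own, your proposal has two genuine gaps. The first is structural: you prescribe the hull by $H\cap N=\hat N$ (the Malcev hull of $\Gamma\cap N$) and $r(H)=A^0$, and the Heisenberg analogy you invoke in your favour is in fact a counterexample to this prescription. In the model case $\R\times\Heis_3$, take $\Gamma$ generated by $(1,x)$ and $(\sqrt2,y)$ with $[x,y]$ a nontrivial central element: $\Gamma$ is discrete, projects densely onto $Q=\R$, and $\Gamma\cap N$ is the cyclic central subgroup generated by $[x,y]$, so $\hat N=Z(N)$ is one-dimensional; but the actual syndetic hull is the graph of a surjective homomorphism $\Heis_3\to\R$, a copy of $\Heis_3$ sitting diagonally, whose intersection with $N$ is two-dimensional and strictly larger than $\hat N$. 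Your group $A^0\ltimes\hat N$ does not even contain $\Gamma$. The same diagonal phenomenon is visible in the paper's own use of the lemma (in Proposition \ref{Proposition: p(Gamma)=R} the hull contains elements with nontrivial $Q$-projection and $\dim(N\cap\Heis)$ exceeds what $\Gamma\cap\Heis$ alone would give). Relatedly, your final lattice verification is incoherent: the image of $\Gamma$ in $H/\hat N=A^0$ is dense in the noncompact direction $Q$, hence neither relatively compact nor a lattice there, and ``lattice in the normal factor plus relatively compact image in the quotient'' would not yield compactness of $\Gamma\backslash H$ in any case. The extra rank has to be absorbed into $H\cap N$, which is exactly what your construction forbids.

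The second gap is in the nilpotency step, which you rightly call the crux. Lemma \ref{Lemma: eigenvalues > 1 implies homothetic} and Corollary \ref{Corollary: sequence converging to 1} require \emph{all} eigenvalues to lie strictly inside (or, after inversion, outside) the unit circle; only then does iterated conjugation contract and contradict discreteness. If the action of $A^0$ on $\hat N$ is mixed hyperbolic --- eigenvalues on both sides of the unit circle, which is precisely the situation for the $\R_{\bf L}$-action in the hyperbolic Cahen--Wallach case where the lemma gets applied --- iteration expands some directions while contracting others and produces no accumulation, so your argument proves nothing there (compare $\Z\ltimes_A\Z^2\subset\Sol$, which is discrete and not virtually nilpotent). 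The mechanism that actually exploits the density hypothesis is different: density provides generators $\gamma\in\Gamma$ with $r(\gamma)$ arbitrarily close to the identity, and one combines a Zassenhaus neighborhood (Fact \ref{Thurston: strongly Zassenhaus neighborhood}) with the graded rescaling automorphism of $\Heis$ (as in the proof of Lemma \ref{lemma: description of projection of N_G(Gamma)}) to push a generating set of a finite-index subgroup into such a neighborhood, obtaining nilpotency of the group directly; unipotence of the hull's action is then a consequence, not the starting point.
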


\subsection{Proof of Theorem \ref{Theorem 1}} 
We will examine all possible closures of the projection of 
$\Gamma$ to $\R_{\bf H} \times \R_{\bf L} \simeq \R^2$.

\begin{proposition}\label{Proposition: p(Gamma)=RxR}
If $\overline{p(\Gamma)} = \R_{\bf H} \times \R_{\bf L}$, then $\Gamma$ does not act  properly discontinuously and cocompactly on $X$.    
\end{proposition}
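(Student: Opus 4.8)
The plan is to argue by contradiction. Assume $\Gamma$ acts properly discontinuously and cocompactly on $X$; then $\Gamma$ is discrete, and since $X\cong\R^{n+2}$ is connected and simply connected and $\Gamma\backslash X$ is compact, $\Gamma$ is finitely generated. The strategy is to use the density of $p(\Gamma)$ in $\R_{\bf H}\times\R_{\bf L}$ to extract a single element acting as a genuine contraction on $\Heis$, then apply Corollary \ref{Corollary: sequence converging to 1} to conclude that $\Gamma$ embeds in $\R^2$, which is incompatible with a cocompact action on an $(n+2)$-manifold (note $n\geq 1$, since non-flatness forces $\dim X = n+2\geq 3$).

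First I would produce a contracting element. For $\gamma\in\Gamma$ write $r(\gamma)=(e^{s{\bf H}},e^{t{\bf L}},k)$. The operator $\Ad_{r(\gamma)}$ preserves the decomposition $\heis=\a\oplus\z$: on $\z$ it is the scalar $e^{2s}$, while on $\a$ its spectral radius is bounded, via submultiplicativity of the operator norm and $\|k\|=1$, by $e^{s}\,\|(e^{t{\bf L}}k)|_\a\|\leq e^{\,s+|t|\,\|{\bf L}\|}$. Hence on the open cone $\mathcal C:=\{(s,t): s<0,\ |t|\,\|{\bf L}\|<|s|\}\subset\R_{\bf H}\times\R_{\bf L}$ the operator $\Ad_{r(\gamma)}$ has all eigenvalues of modulus $<1$. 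Since $\overline{p(\Gamma)}=\R_{\bf H}\times\R_{\bf L}$, the dense set $p(\Gamma)$ meets the nonempty open set $\mathcal C$, so there is $\gamma_0\in\Gamma$ with $\Ad_{r(\gamma_0)}$ a contraction of $\heis$.

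Next, Corollary \ref{Corollary: sequence converging to 1} applied to $\Lambda=\Gamma$ (discrete, containing the contracting $\gamma_0$) gives $\Gamma\cap(K\ltimes\Heis)=\{1\}$. As $K\ltimes\Heis$ is exactly the kernel of $p$, the restriction $p|_\Gamma$ is injective, so $\Gamma$ is isomorphic to the finitely generated torsion-free abelian group $p(\Gamma)\leq\R^2$; that is, $\Gamma\cong\Z^k$ with $k\leq 2$.

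Finally I would reach the contradiction via cohomological dimension. Being torsion-free and acting properly discontinuously on the manifold $X$, the group $\Gamma\cong\Z^k$ has all point stabilizers finite, hence trivial, so the action is free; since $X$ is contractible, $M:=\Gamma\backslash X$ is a closed aspherical $(n+2)$-manifold with $\pi_1(M)\cong\Z^k$, whence $n+2=\dim M=\operatorname{cd}(\Z^k)=k\leq 2$, contradicting $n\geq 1$. The main obstacle is the first step: one must verify that the homothetic direction $\R_{\bf H}$ dominates the $\R_{\bf L}$-action strongly enough to yield a \emph{full} contraction, which is why I single out the cone $\mathcal C$ and bound the spectral radius by the operator norm rather than tracking individual eigenvalues of the possibly non-commuting factors $e^{t{\bf L}}$ and $k$.
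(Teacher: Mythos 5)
Your first two steps are sound, and they give a genuinely more direct route than the paper's (which invokes the syndetic hull of Lemma~\ref{Lemma appendix} and then analyses the structure of the resulting nilpotent group $N$): the cone argument correctly produces $\gamma_0\in\Gamma$ with $\Ad_{r(\gamma_0)}$ contracting on $\heis$ (using that in the complete case ${\bf L}$ and $K$ act trivially on $\z$, so the $\z$-eigenvalue is $e^{2s}$, while on $\a$ the spectral radius is controlled by $e^{s+|t|\|{\bf L}\|}$), and Corollary~\ref{Corollary: sequence converging to 1} then yields $\Gamma\cap(K\ltimes\Heis)=\{1\}$, so $p|_\Gamma$ is injective and $\Gamma$ is a finitely generated torsion-free abelian group.

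The gap is the assertion that $p(\Gamma)\leq\R^2$ forces $\Gamma\cong\Z^k$ with $k\leq 2$. A finitely generated subgroup of $\R^2$ (or even of $\R$) can have arbitrary rank: for instance $\Z\cdot 1+\Z\sqrt2+\Z\sqrt3\cong\Z^3$ sits inside $\R$. The bound $\mathrm{rank}\leq 2$ holds only for \emph{discrete} subgroups of $\R^2$, and the hypothesis of the proposition is precisely that $p(\Gamma)$ is dense, hence non-discrete; so nothing prevents $p(\Gamma)\cong\Z^{n+2}$ a priori, and your final cohomological-dimension contradiction evaporates. The missing step is supplied by the argument the paper uses in Proposition~\ref{Proposition: p(Gamma)=RxZ ou ZxZ}: since $\Gamma\cap(K\ltimes\Heis)=\{1\}$, $\Gamma$ is abelian; conjugating by Lemma~\ref{Lemma fixed point: (A,b)-->(A,0)} so that $\gamma_0=(a_0,1)$, commutativity forces $a_0(y)=y$ for the $\Heis$-part $y$ of every $\gamma\in\Gamma$, and since $a_0-\Id$ is invertible this gives $y=1$. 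Hence $\Gamma\subset\R_{\bf H}\times\R_{\bf L}\times K$; as $\Gamma$ is discrete and $K$ is compact, $p(\Gamma)$ is discrete in $\R^2$, contradicting its density. With that replacement (which makes the cohomological-dimension step unnecessary here) your proof closes correctly.
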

\begin{proof}
By Lemma \ref{Lemma appendix},  $\Gamma$ is a cocompact lattice in a connected nilpotent subgroup $N$ of $G$. 
We will show that $N \cap \Heis \neq \{1\}$. 
Our claim is that $\dim N \geq n+2$. Indeed, $\Gamma$ acts properly and cocompactly on the $K(\pi,1)$ space $N/C \simeq \R^k$, where $C$ is the maximal compact subgroup of $N$. So its cohomological dimension is equal to $\dim (N/C)$. On the other hand, since $\Gamma$ also acts properly and cocompactly on $X = \R^{n+2}$, its cohomological dimension must be $n+2$. This implies that  $\dim N/C =n+2$. 
Now, decompose $N$ as $N = \R \ltimes N_1$, where the $\R$-factor is generated by a one-parameter subgroup of $G$ with a nontrivial projection on $\R_{\bf L}$, and where  $$N_1 := N \cap (\R_{\bf H} \times K) \ltimes \Heis,$$ satisfies $\dim N_1 \geq n+1$. We have $p_{\bf H}(N_1) = \R$, so we further decompose $N_1$ as $N_1 = \R \ltimes N_0$, where the $\R$-factor is a one-parameter subgroup $s(t)$ of  $(\R_{\bf H} \times K) \ltimes \Heis$ that has a nontrivial projection on $\R_{\bf H}$, and $$N_0 := N_1 \cap (K \ltimes \Heis),$$ with $\dim N_0 \geq n$. 
Since $N_0$ is nilpotent, its projection to $K$ is an abelian  subgroup of $\O(n)$, whose maximal possible dimension is  $\lfloor \frac{n}{2} \rfloor$. Consequently, $N_0$ must intersect $\Heis$ non-trivially. Moreover, $N_1$ contains the subgroup  $s(t) \ltimes (N_0 \cap \Heis)$, which is therefore nilpotent. However, since the action of $s(t)$ on $N_0 \cap \Heis$ is semisimple (and unipotent), it must be trivial.  This is impossible since  $s(t)$ projects non-trivially on $\R_{\bf H}$, so its real eigenvalues are all different from $1$ (the real eigenvalues of its projection to $K$ are all equal to $1$).     
\end{proof}

\begin{proposition}\label{Proposition: p(Gamma)=R}
If $\overline{p(\Gamma)} = \R$, the action of $\Gamma$ is properly discontinuous and cocompact if and only if $\Gamma$ is contained in the isometry group. 
\end{proposition}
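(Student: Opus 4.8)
The reverse implication is the trivial one: if $\Gamma$ is contained in the isometry group $G_\rho=(\R_{\bf L}\times K)\ltimes\Heis$, then $p_{\bf H}(\Gamma)=\{1\}$, so $p(\Gamma)$ automatically lies on the $\R_{\bf L}$-axis and $\Gamma$ falls in this branch. The whole content is the direct implication, so I assume $\Gamma$ acts properly discontinuously and cocompactly with $\overline{p(\Gamma)}=\R$ and aim to force $p_{\bf H}(\Gamma)=\{1\}$. Write $L:=\overline{p(\Gamma)}\cong\R$, a line in $\R_{\bf H}\times\R_{\bf L}$; since $p(\Gamma)\subseteq L$, it suffices to show $L$ is the $\R_{\bf L}$-axis, for then $p_{\bf H}(\Gamma)=\{1\}$ and $\Gamma\subseteq G_\rho$. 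Arguing by contradiction, I suppose $L$ has nonzero slope in the $\R_{\bf H}$-direction, so that $p_{\bf H}(\Gamma)$ is dense in $\R_{\bf H}$, and I seek a contradiction with proper discontinuity.

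The cleanest sub-case is $p_{\bf L}(\Gamma)=\{1\}$, i.e. $L$ is the $\R_{\bf H}$-axis. Then any $\gamma\in\Gamma$ with $p_{\bf H}(\gamma)\neq 1$ has $p_{\bf L}(\gamma)=1$, so by Corollary \ref{Corollary 1} it admits a fixed point in $X$; being a nontrivial homothety it has infinite order, so $\langle\gamma\rangle\cong\Z$ fixes a point of $X$, producing an infinite point-stabilizer, which is impossible for a properly discontinuous action. For the general (slanted) line, where $p_{\bf L}(\Gamma)$ is dense as well, I would proceed along the lines of Proposition \ref{Proposition: p(Gamma)=RxR}: applying Lemma \ref{Lemma appendix} with $Q$ the one-parameter group $L$ and $C=K$, the density of the projection of $\Gamma$ on $L$ makes $\Gamma$, up to finite index, a cocompact lattice in a connected nilpotent subgroup $N$ with $p(N)=L$. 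Comparing the cohomological dimensions of the two free cocompact actions of $\Gamma$ (on $X\cong\R^{n+2}$ and on $N/C\simeq\R^{k}$) forces $\dim N/C=n+2$, while $N\cap Z=\{1\}$, since the generator of $L$ scales the center of $\Heis$ nontrivially and a nilpotent group cannot contain such a semisimple scaling (compatibly with Lemma \ref{Lemma: Gamma does not intersect the center}). Hence $N\cap\Heis$ is an isotropic, hence at most Lagrangian, subspace of $\a$, and the dimension bookkeeping (an abelian subgroup of $\O(n)$ has dimension at most $\lfloor n/2\rfloor$) pins $N$ down tightly.

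The main obstacle is exactly this slanted case. Unlike in Proposition \ref{Proposition: p(Gamma)=RxR}, where the expanding $\R_{\bf H}$-direction could be isolated from $\R_{\bf L}$ inside the syndetic hull and acted on $\a$ with all eigenvalues of modulus $\neq 1$, here the homothetic and the $\R_{\bf L}$-directions are carried by a single one-parameter subgroup, whose action on $\a$ mixes the homothety factor with the (possibly hyperbolic, symplectic) action of ${\bf L}$; thus the clean ``semisimple $+$ unipotent $=$ trivial'' contradiction does not transfer verbatim, as the combined action can be unipotent on a Lagrangian eigenspace. I expect to close the gap by returning to proper discontinuity relative to the \emph{non-compact} isotropy $I=K\ltimes A^+$: the density of the homothety characters $\gamma\mapsto e^{2p_{\bf H}(\gamma)}$ yields distinct $\gamma_j\in\Gamma$ whose linear parts $r(\gamma_j)$ converge (after extraction, using compactness of $K$ and $p_{\bf L}(\gamma_j)\to 0$), which by discreteness forces the Heisenberg parts to escape to infinity; controlling this escape against the $A^+$-directions of $I$ should exhibit a compact subset of $X$ meeting infinitely many of its $\Gamma$-translates, contradicting proper discontinuity. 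Alternatively, whenever a contracting element is available, Corollary \ref{Corollary: sequence converging to 1} gives $\Gamma\cap(K\ltimes\Heis)=\{1\}$, collapsing $\Gamma$ onto a dense subgroup of $L\cong\R$ of rank $\geq 2$, whose free cocompact action on $\R^{n+2}$ is then incompatible with properness.
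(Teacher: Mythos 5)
There is a genuine gap, and you flag it yourself: the ``slanted line'' case, where $\overline{p(\Gamma)}$ projects non-trivially onto both $\R_{\bf H}$ and $\R_{\bf L}$, is never closed. Your setup is correct up to a point: the reverse implication, the pure-$\R_{\bf H}$ sub-case via Corollary \ref{Corollary 1}, and the passage to a connected nilpotent syndetic hull $N$ via Lemma \ref{Lemma appendix} all match the paper (the paper additionally verifies that $\R_{\bf T}\ltimes(K\ltimes\Heis)$ is isomorphic to a product $(\R\times K)\ltimes\Heis$ so that Lemma \ref{Lemma appendix} genuinely applies; you skip this, but it is a minor technical step). You also correctly observe that nilpotency of $N$ together with the non-trivial $\R_{\bf H}$-scaling of the center forces $N\cap Z=\{1\}$. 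But your two proposed ways of turning this into a contradiction do not work as stated. The dimension bookkeeping fails numerically: with $\dim N=n+2$, an at-most-$\lfloor n/2\rfloor$-dimensional torus part and an at-most-Lagrangian (dimension $\le n$) intersection with $\Heis$, the inequality $n+2\le 1+\lfloor n/2\rfloor+n$ holds for all $n\ge 2$, so nothing is pinned down. The dynamical argument (``controlling this escape against the $A^+$-directions \dots should exhibit a compact subset meeting infinitely many translates'') is a program, not a proof.

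The missing ingredient is the opposite inclusion: the paper invokes \cite[Proposition 5.8]{article2_crelle}, which says that a connected subgroup acting cocompactly on $X=(\R_{\bf T}\times K)\ltimes\Heis/K\ltimes A^+$ must \emph{contain} the center $Z$ of $\Heis$. Combined with your own observation this is immediately fatal: $N$ would contain both a nontrivial $z\in Z$ and an element $x$ with $p_{\bf H}(x)\neq 1$, whence $\Ad_x(z)=\lambda z$ with $\lambda\neq 1$, contradicting nilpotency of $N$. So the contradiction lives entirely at the level of the center, not in a Lagrangian complement, and it requires the cocompactness-forces-$Z\subset N$ input that your argument lacks.
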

\begin{proof}
We have a short exact sequence $1 \to K \ltimes \Heis  \to G \to \R_{\bf L} \times \R_{\bf H} \to 1$. 
Let $\R_{\bf T}=\{e^{t T}, t \in \R \}$ be a one-parameter subgroup of $G$ such that $p(\R_{\bf T}) = \overline{p(\Gamma)}$. 
Then, $\Gamma$ is contained in $G_1:=\R_{\bf T} \ltimes (K\ltimes \Heis)$. 

We will show that this group is, in fact, isomorphic to a group of the form $(\R \times K) \ltimes \Heis$. 
Write ${\bf T}=\lambda+\Phi+\omega$, with $\lambda \in \R H \oplus \R L$, $\Phi \in \mathfrak{k}$, and $\omega \in \heis$. 
For any $\Psi \in \mathfrak{k}$, we have 
\begin{equation}\label{Eq_[T,Psi]}
[{\bf T},\Psi]=[\Phi,\Psi]+[\omega,\Psi] \in \mathfrak{k} \ltimes \heis. 
\end{equation} 
Since both $\ad_{\bf T}$ and $\ad_\Psi$  preserve the decomposition $\heis=\a \oplus \z$, it follows that $\ad_{[{\bf T},\Psi]}$, and hence, by (\ref{Eq_[T,Psi]}), also $\ad_{[\omega,\Psi]}$, preserve this decomposition. Thus, $[\omega,\Psi]$ must lie in the center $\z$ of $\heis$. Therefore, $\Ad_{\bf T}$ maps $\mathfrak{k}$ into $\mathfrak{k} \oplus \z$, implying that the $\R_{\bf T}$-action sends $K$ onto a compact subgroup of $K \times Z$. However, any compact subgroup of $K \times Z$ must be contained in $K$. This implies that the $\R_{\bf T}$-action on $K \ltimes \Heis$ preserves $K$.  By Lemma \ref{Lemma product R x K}, $G_1$ is isomorphic to a group of the given form.

Since $\R_{\bf T}$ projects non-trivially on $\R_{\bf L}$, the space $X$ can be identified with the quotient $(\R_{\bf T} \times K) \ltimes \Heis / K \ltimes A^+$. Moreover, $\Gamma$ projects as a dense subgroup of $\R_{\bf T}$. 
Thus, we are in the setting of  Lemma \ref{Lemma appendix}, which implies that $\Gamma$ is a cocompact lattice in a connected nilpotent subgroup $N$ of $G$.  Furthermore, since $N$ also acts cocompactly on $X=(\R_{\bf T} \times K) \ltimes \Heis / K \ltimes A^+$, it follows from \cite[Proposition 5.8]{article2_crelle} that $N$ contains the center of $\Heis$. Now, assume that $\R_{\bf T}$ has a non-trivial projection on $\R_{\bf H}$. 
Then $N$ contains two elements $x$ and $z$, where $x$ has a non-trivial projection on $\R_{\bf H}$, and $z$ belongs to the center of $\Heis$. The  actions of $\R_{\bf L}$ and $K$ on the center of $\Heis$ are trivial, but the $\R_{\bf H}$ action is non-trivial. 
This yields  $\Ad_x(z)=\lambda z$, with $\lambda \neq 1$, which contradicts the nilpotency of $N$. Therefore, $\R_{\bf T}$ must be contained in $\R_{\bf L} \times K$, which implies that $\Gamma$ is contained in the isometry group. 
\end{proof}

\begin{proposition}\label{Proposition: p(Gamma)=RxZ ou ZxZ}
If $\overline{p(\Gamma)} \simeq \R \times \Z$ or $\Z^2$, then the action of $\Gamma$ cannot be cocompact and properly discontinuous on $X$. 
\end{proposition}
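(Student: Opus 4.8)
The plan is to produce inside $\Gamma$ a single element acting on $\Heis$ by a contraction, and then to use commutation with it to trap all of $\Gamma$ inside $\R_{\bf H}\times\R_{\bf L}\times K$, where discreteness bounds the rank. In both admissible cases the closed subgroup $\overline{p(\Gamma)}\subset\R_{\bf H}\times\R_{\bf L}\simeq\R^2$ is not contained in a line through the origin, so it contains two $\R$-linearly independent elements, hence a rank-two lattice $\Delta_0$; the directions $\{\delta/\|\delta\| : \delta\in\Delta_0\setminus\{0\}\}$ of such a lattice are dense in the circle. First I would compute, for $\gamma$ with $r(\gamma)=(e^{s{\bf H}},e^{t{\bf L}},k)$, that $\Ad_{r(\gamma)}$ acts on the center $\z$ by $e^{2s}$ and on $\a$ with eigenvalues of modulus at most $e^{s}\,\|e^{t{\bf L}}\|$, the orthogonal factor $k$ not changing the operator norm. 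Writing $M'$ for the spectral abscissa of ${\bf L}|_{\a}$, all points $(s,t)$ with $|t|\le\varepsilon|s|$ for a fixed small $\varepsilon>0$ and $s$ sufficiently negative lie in the open set $R$ on which every eigenvalue of $\Ad_{r(\gamma)}$ has modulus $<1$. Since $\Delta_0$ has dense directions it, and hence $\overline{p(\Gamma)}$, meets this narrow cone arbitrarily far out; as $R$ is open and $p(\Gamma)$ is dense in $\overline{p(\Gamma)}$, there is $\gamma_0\in\Gamma$ with $p(\gamma_0)\in R$, i.e. $\Ad_{r(\gamma_0)}$ is a contraction.

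Next I would exploit this contraction twice. On the one hand Corollary~\ref{Corollary: sequence converging to 1} gives $\Gamma\cap(K\ltimes\Heis)=\{1\}$; since $K\ltimes\Heis=\ker p$, the map $p|_\Gamma$ is injective and $\Gamma$ is abelian. On the other hand, because $\Ad_{r(\gamma_0)}$ has no eigenvalue $1$, Lemma~\ref{Lemma fixed point: (A,b)-->(A,0)} lets me replace $\Gamma$ by a $\Heis$-conjugate so as to assume $\gamma_0=(a_0,1)$ with $a_0=r(\gamma_0)$. The contracting automorphism $a_0$ of the simply connected nilpotent group $\Heis$ fixes only the identity (its differential $\Ad_{a_0}$ has no eigenvalue $1$), so a direct commutation computation shows that any $(b,y)$ commuting with $(a_0,1)$ has $y=1$. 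As $\Gamma$ is abelian, this forces $\Gamma$ into $\R_{\bf H}\times\R_{\bf L}\times K$, i.e. into a discrete subgroup of $\R^2\times K$ with $K$ compact. The projection $\R^2\times K\to\R^2$ is proper, hence maps the closed, discrete, torsion-free $\Gamma$ homeomorphically onto a discrete subgroup $p(\Gamma)$ of $\R^2$; therefore $\Gamma\simeq p(\Gamma)\simeq\Z^{r}$ with $r\le 2$.

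Finally, $\Gamma\simeq\Z^r$ acts freely, being torsion-free, so a properly discontinuous cocompact action would exhibit $\Gamma$ as the fundamental group of a closed aspherical manifold of dimension $n+2$; hence its cohomological dimension equals $n+2\ge 3$ (recall $n\ge 1$ since $X$ is non-flat), contradicting $\mathrm{cd}(\Z^r)=r\le 2$. This settles both cases at once, and in fact already contradicts the hypothesis in the $\R\times\Z$ case, where $p(\Gamma)$ is dense in a line and so cannot be discrete.

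The step I expect to be delicate is the first one: the rotation part $k$ and the $\R_{\bf L}$-flow $e^{t{\bf L}}$ of $\gamma$ are not read off from $p(\gamma)=(s,t)$, so one must check that $k$ being orthogonal leaves the relevant operator norm unchanged and that the at most exponential growth $e^{M'|t|}$ of $e^{t{\bf L}}$ is dominated by $e^{s}$ precisely along the directions in which $\overline{p(\Gamma)}$ escapes to infinity. It is exactly here that the hypothesis that $\overline{p(\Gamma)}$ is not contained in a line — valid for $\R\times\Z$ and $\Z^2$ but failing for the rank-one closure $\Z$ — is used, consistent with the genuinely non-isometric examples occurring only when $\overline{p(\Gamma)}\simeq\Z$.
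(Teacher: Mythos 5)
Your proof is correct and follows essentially the same route as the paper's: produce an element of $\Gamma$ whose adjoint action on $\heis$ is a contraction, invoke Corollary~\ref{Corollary: sequence converging to 1} to get $\Gamma\cap(K\ltimes\Heis)=\{1\}$ (hence $\Gamma$ abelian), then use Lemma~\ref{Lemma fixed point: (A,b)-->(A,0)} and the commutation computation to trap $\Gamma$ inside $\R_{\bf H}\times\R_{\bf L}\times K$, concluding via discreteness of $p(\Gamma)$ (in the $\R\times\Z$ case) and cohomological dimension (in the $\Z^2$ case). The only divergence is cosmetic: you obtain the contracting element from the density of directions of a rank-two lattice in $\overline{p(\Gamma)}$ together with an operator-norm estimate, whereas the paper routes through Corollary~\ref{Corollary 1} and density of the projection on $\R_{\bf L}$; your version spells out the spectral estimate (including the role of the orthogonal factor and the eigenvalue $e^{2s}$ on the center) more explicitly than the paper does.
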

\begin{proof}
By Corollary \ref{Corollary 1}, we have $p(\Gamma) \cap  \R_{\bf H} = \{1\}$. Under the assumption on $\Gamma$, this  implies that the projection of $\Gamma$ on $\R_{\bf L}$ is dense, 
which ensures the existence of an element $\gamma = (a,x) \in \Gamma$  such that the adjoint action $\Ad_a \in \Aut(\heis)$  has all its eigenvalues with norm $< 1$. 
Therefore, by Corollary \ref{Corollary: sequence converging to 1}, 
if $\Gamma \cap (K \ltimes \Heis) \neq \{1\}$, the action of $\Gamma$ is not properly discontinuous. We will show that this intersection is non-trivial. 
We first consider the case $\overline{p(\Gamma)} \simeq \R \times \Z$. Assume, for contradiction, that $\Gamma \cap (K \ltimes \Heis) = \{1\}$. Then $\Gamma$ is abelian. 
Let $\gamma_1=(b,y) \in \Gamma$. By Lemma \ref{Lemma fixed point: (A,b)-->(A,0)}, we can assume $x=1$. 
Since $\Gamma$ is abelian, we have $\gamma \gamma_1 \gamma^{-1}=(aba^{-1},a(y))= (b,y)$.  In particular, this implies that $a(y)=y$, and thus $y=1$. Consequently, $\Gamma$ is contained in $\R_{\bf H} \times \R_{\bf H} \times K$. Since $K$ is compact, the projection of $\Gamma$ on $\R_{\bf H} \times \R_{\bf L}$ must be discrete. Contradiction. Now, assume $\overline{p(\Gamma)} \simeq  \Z^2$. If $\Gamma \cap (K \ltimes \Heis) = \{1\}$, then $\Gamma$ is isomorphic to $\Z^2$, which implies that $\mathsf{cd}(\Gamma)=\mathsf{rank}(\Z^2)=2$. However, if $\Gamma$ acts properly discontinuously and cocompactly on $X$, then $\mathsf{cd}(\Gamma)=\dim X = n+2$, leading to a contradiction.  
\end{proof}

\begin{proposition}\label{Proposition: p(Gamma)=Z}
If $p(\Gamma) \simeq \Z$, then, up to finite index, $\Gamma$ is isomorphic to $\langle \hat{\gamma} \rangle \ltimes \Gamma_0$, with $\Gamma_0$ contained in $\Heis$. 
Moreover, if $\Gamma \not \subset \Isom(X)$, then $\Gamma_0$ is  abelian. 
\end{proposition}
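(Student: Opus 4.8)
The plan is to analyze the normal subgroup $\Gamma_0 := \Gamma \cap (K \ltimes \Heis) = \ker\bigl(p|_\Gamma\bigr)$ and to exploit a fibration of the compact quotient over a circle. Since $p(\Gamma) \simeq \Z$ is free, the extension $1 \to \Gamma_0 \to \Gamma \to p(\Gamma) \to 1$ splits, so choosing any $\hat\gamma \in \Gamma$ whose projection generates $p(\Gamma)$ gives $\Gamma \simeq \langle \hat\gamma \rangle \ltimes \Gamma_0$, and the whole content lies in locating $\Gamma_0$ inside $\Heis$. First I would check that $p_{\bf L}(\Gamma) \neq 1$: otherwise $p(\Gamma) \subset \R_{\bf H}$, so the generator would satisfy $p_{\bf L}(\hat\gamma) = 1$ and $p_{\bf H}(\hat\gamma) \neq 1$, hence admit a fixed point by Corollary \ref{Corollary 1}, contradicting proper discontinuity. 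Thus $\Gamma$ acts on the leaf space $\xi \simeq \R$ through the nontrivial translation group $p_{\bf L}(\Gamma) \simeq \Z$, and $\Gamma_0$ is precisely the subgroup preserving every leaf.

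This produces a fibration $M = \Gamma \backslash X \to \Gamma \backslash \xi \simeq \S^1$ whose fiber is $\Gamma_0 \backslash L$, where $L \simeq \R^{n+1}$ is a leaf of $\F$. Being the fiber of a proper submersion from a compact manifold, it is compact, so $\Gamma_0$ acts freely, properly discontinuously and cocompactly on $L \simeq \R^{n+1}$; in particular $\Gamma_0$ is torsion-free with $\mathsf{cd}(\Gamma_0) = n+1$. Identifying $L$ with $\Heis/A^+$, the group $K \ltimes \Heis$ acts with $K$ acting faithfully (by rotations of the $\a^-$-coordinate, fixing the base point $o = eA^+$) and $A^+$ acting by shears; a direct computation shows that only the identity of $K \ltimes \Heis$ acts trivially on $L$.

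Next I would build a nilpotent model for $\Gamma_0$. Applying Lemma \ref{Lemma appendix} to $\Gamma_0 \subset K \ltimes \Heis$ (with trivial $Q$-factor), up to finite index $\Gamma_0$ is a cocompact lattice in a connected closed nilpotent subgroup $N \subset K \ltimes \Heis$; writing $N = T \times U$ with $T$ its maximal compact (necessarily central) torus and $U$ simply connected nilpotent, cocompactness gives $\mathsf{cd}(\Gamma_0) = \dim U = n+1$. Everything then reduces to showing $T = 1$, equivalently that $p_K(\Gamma_0)$ is finite. The intended mechanism is that the matching dimensions $\dim U = \dim L$ together with cocompactness force the translation part of $N$ to act transitively, hence simply transitively, on $L$; since $T$ is central it commutes with this action and fixes $o$ (after an inner conjugation by $\Heis$ placing $T$ in $K$), so $T$ fixes the whole orbit and acts trivially on $L$, whence $T = 1$ by faithfulness of $K$ on $L$. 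Consequently $N = U \subset \Heis$ and, up to finite index, $\Gamma_0 \subset \Heis$.

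I expect this torus-killing step to be the main obstacle: it amounts to ruling out a \emph{rotational tilting} of the syndetic hull into the compact factor $K$ (a lattice $\Gamma_0$ whose $K$-part is dense in a positive-dimensional subtorus, so that no finite-index subgroup lies in $\Heis$). The crux is that such a torus would have to act simultaneously trivially on $L$ (commuting with the transitive translation subgroup and fixing $o$) and faithfully (as $K$ embeds in the rotations of $L$), which is impossible — but making the transitivity and the triviality of $N \cap (K A^+)$ rigorous, rather than merely dimension-theoretic, is the delicate part. Granting it, the \emph{moreover} clause is immediate: if $\Gamma \not\subset \Isom(X)$ then, after the reduction, $\Gamma_0 \subset \Heis$, so $[\Gamma_0,\Gamma_0] \subset [\Heis,\Heis] = Z$; since $\Gamma$ is discrete and not contained in the isometry group, Lemma \ref{Lemma: Gamma does not intersect the center} gives $\Gamma \cap Z = \{1\}$, and therefore $[\Gamma_0,\Gamma_0] \subset \Gamma \cap Z = \{1\}$, i.e. $\Gamma_0$ is abelian.
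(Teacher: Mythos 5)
Your architecture matches the paper's up to the decisive step: split the extension over the cyclic quotient, observe that $\Lambda := \Gamma \cap (K \ltimes \Heis)$ acts properly discontinuously and cocompactly on the leaf $L \simeq (K\ltimes\Heis)/(K\ltimes A^+) \simeq \R^{n+1}$, pass to a nilpotent syndetic hull via Lemma \ref{Lemma appendix}, and then try to show the $K$-part dies up to finite index; your ``moreover'' step (commutators of $\Gamma_0 \subset \Heis$ land in $Z$, and $\Gamma \cap Z = \{1\}$ by Lemma \ref{Lemma: Gamma does not intersect the center}) is exactly the paper's. But the step you yourself flag as the obstacle --- killing the compact torus $T$ --- is a genuine gap, and it is precisely the point where the dimension count does not close. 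You know that $U$ is simply connected nilpotent of dimension $n+1 = \dim L$ and contains a cocompact lattice, but an affine action of such a group on $\R^{n+1}$ with an orbit of full dimension need not be transitive: the orbit of $o$ is only an immersed open subset a priori (think of a two-dimensional affine group acting on $\R^2$ with an open half-plane as orbit). Without transitivity of $U$ on $L$, the argument ``$T$ commutes with $U$ and fixes $o$, hence acts trivially on $L$'' does not get started, and nothing you have written rules out a hull whose $K$-projection is dense in a positive-dimensional torus.

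The paper closes exactly this gap with an external input rather than a transitivity argument: the quotient $\Lambda' \backslash L$ is a \emph{compact, complete} affine manifold modeled on $(\Aff(\R^{n+1}),\R^{n+1})$ with nilpotent holonomy, and by Fried--Goldman--Hirsch (Theorem A of \cite{fried1981affine}) such a manifold is complete if and only if its linear holonomy is unipotent. Completeness holds here for free (the developing map is the identity of $\R^{n+1}$), so the linear holonomy of $\Lambda'$ is unipotent; since its $K$-component lies in a compact subgroup of $\O(n)$ acting semisimply, unipotence forces it to be trivial, whence $\Lambda' \subset \Heis$ and $\Gamma \cap \Heis$ has finite index in $\Lambda$. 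If you want to keep your route, you would need to supply a proof that the affine holonomy is unipotent (or that $U$ acts simply transitively), which essentially amounts to reproving this completeness theorem in your special case; citing it directly is the efficient fix.
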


\begin{proof}
Let $\Lambda := \Gamma \cap (K \ltimes \Heis)$. 
Since $p(\Gamma)$ is cyclic, the exact sequence $1 \to \Lambda \to \Gamma \to p(\Gamma) \to 1$ splits. Let $\hat{\gamma} \in \Gamma$ be an element that projects to a generator of $p(\Gamma)$.  Then, $\Gamma=\langle \hat{\gamma} \rangle \ltimes \Lambda$. Moreover, $\Lambda$ acts properly discontinuously and cocompactly on $K \ltimes \Heis / K \ltimes A^+$. 
By Lemma \ref{Lemma appendix}, there exists a finite index subgroup $\Lambda'$ of $\Lambda$ that is nilpotent. In particular, the quotient space $\Lambda' \backslash K \ltimes \Heis / K \ltimes A^+ $ is a compact manifold modeled on $(\Aff(\R^{n+1}), \R^{n+1})$, with nilpotent holonomy. By \cite[Theorem A]{fried1981affine},  a compact affine manifold with a nilpotent holonomy group is complete if and only if it has unipotent linear holonomy. This implies that $\Lambda'$ has a trivial projection on $K$, and hence is contained in $\Heis$. Thus, $\Lambda' \subset \Lambda \cap \Heis$ has finite index in $\Lambda$. Consequently, $\Lambda \cap \Heis$  also has finite index in $\Lambda$ and is normalized by $\Gamma$. 
Let $\Gamma_0 := \Gamma \cap  \Heis$. Then $\langle \hat{\gamma} \rangle \ltimes \Gamma_0$ has finite index in $\Gamma$. 
Now, if $\Gamma_0$ is non-abelian,  then 
it intersects $Z$ non-trivially. However, by Lemma \ref{Lemma: Gamma does not intersect the center}, this is not possible when $\Gamma$ is not contained in the isometry group. 
\end{proof}
\begin{proof}[Proof of Theorem \ref{Theorem 1}]
This follows from Propositions \ref{Proposition: p(Gamma)=RxR}, \ref{Proposition: p(Gamma)=R}, \ref{Proposition: p(Gamma)=RxZ ou ZxZ}, and \ref{Proposition: p(Gamma)=Z}.      
\end{proof}

\begin{remark}\label{Remark: proper action of Gamma}
Let $\Gamma$ be a group of the form $\Gamma= \langle \hat{\gamma} \rangle \ltimes \Gamma_0$, with $\Gamma_0 \subset \Heis$, and let $N_0$ be the Malcev closure of $\Gamma_0$ in $\Heis$. Then $\Gamma$ acts cocompactly and properly discontinuously on $X$ if and only if the following two conditions are satisfied:
\begin{itemize}
    \item $p_{\bf L}(\hat{\gamma}) \neq 1$, $\n_0 \cap \a^+ = \{0\}$ and $\dim \n=n+1$. 
    \item $e^{t L}(\n_0) \cap \a^+ = \{0\}$ for all $t \in \R$. 
\end{itemize}
\end{remark}\medskip

\subsection{Cases where $\Gamma$ is contained in the isometry group}
The following proposition contains in particular the case of Cahen-Wallach spaces of imaginary type, but also more general plane waves, including those with a unipotent $\R_{\bf L}$-action on $\Heis$. 

\begin{proposition}\label{proposition: cases of isometric actions}
If the semisimple part of the $\R_{\bf L}$-action on $\Heis$ is elliptic (i.e. the eigenvalues of ${\bf L} \in \Der(\heis)$ are purely imaginary), then a subgroup of $G$ acting properly discontinuously and cocompactly on $X$ is contained in the isometry group.
\end{proposition}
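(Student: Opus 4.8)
The plan is to argue by contradiction and rule out the non-isometric alternative of Theorem~\ref{Theorem 1}. Recall that the identity component of the isometry group is $G_\rho=(\R_{\bf L}\times K)\ltimes\Heis$, which differs from $G=(\R_{\bf H}\times\R_{\bf L}\times K)\ltimes\Heis$ only by the factor $\R_{\bf H}$; hence a subgroup $\Gamma\subset G$ fails to be contained in $G_\rho$ precisely when there is an element $\gamma=(a,x)\in\Gamma$ with $p_{\bf H}(\gamma)=e^{s{\bf H}}$ for some $s\neq0$. Assuming $\Gamma$ acts properly discontinuously and cocompactly while $\Gamma\not\subset G_\rho$, I fix such a $\gamma$ and study the eigenvalues of the automorphism $\Ad_a\in\Aut(\heis)$.

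The crucial computation uses that $\R_{\bf H}\times\R_{\bf L}\times K$ is a \emph{direct} product inside $\Aut(\Heis)$, so the three actions on $\heis$ commute, together with completeness, which forces ${\bf L}$ (and $K$) to act trivially on the center $\z$. Writing $a=e^{s{\bf H}}e^{t{\bf L}}k$ with $k=p_K(a)$, on $\z\cong\R$ only ${\bf H}$ acts nontrivially, by the scalar $2$, so $\Ad_a|_{\z}=e^{2s}$. On $\a=\a^+\oplus\a^-$ the derivation ${\bf H}$ acts by the scalar $1$, whence $\Ad_a|_{\a}=e^{s}\,M$ with $M:=(e^{t{\bf L}}k)|_{\a}$. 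Here the hypothesis enters: the semisimple part of $e^{t{\bf L}}|_\a$ is elliptic, hence has spectrum on the unit circle, and $k\in\SO(n)$ is semisimple with unit-modulus spectrum; since $e^{t{\bf L}}$ and $k$ commute, the multiplicative Jordan decomposition of $M$ has semisimple part equal to the product of these two commuting unit-modulus semisimple operators (again of unit modulus) and unipotent part inherited from $e^{t{\bf L}}$. Thus every eigenvalue of $M$ has modulus $1$, and therefore every eigenvalue of $\Ad_a$ has modulus $e^{s}$ on $\a$ and $e^{2s}$ on $\z$.

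Since $s\neq0$, all eigenvalues of $\Ad_a$ lie strictly on one side of the unit circle; replacing $\gamma$ by $\gamma^{-1}$ if necessary, I may assume $s<0$, so that $\Ad_a$ has all eigenvalues of modulus $<1$. As the action is properly discontinuous, $\Gamma$ is discrete, and Corollary~\ref{Corollary: sequence converging to 1} yields $\Gamma\cap(K\ltimes\Heis)=\{1\}$. This contradicts the structure of the non-isometric case: by Theorem~\ref{Theorem 1}, up to finite index $\Gamma\simeq\langle\hat\gamma\rangle\ltimes\Gamma_0$ with $\Gamma_0\subset\Heis$ abelian of rank $n+1\geq1$, so $\Gamma_0$ is a nontrivial subgroup of $\Gamma\cap(K\ltimes\Heis)$. (Equivalently, $\Gamma\cap(K\ltimes\Heis)=\{1\}$ makes $p\colon\Gamma\to\R_{\bf H}\times\R_{\bf L}\simeq\R^2$ injective, so $\Gamma$ is torsion-free abelian of rank $\le2$ and $\mathsf{cd}(\Gamma)\le2<n+2=\dim X$, contradicting cocompactness.) Hence no element of $\Gamma$ has nontrivial $\R_{\bf H}$-projection, i.e.\ $\Gamma\subset G_\rho$.

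I expect the eigenvalue computation to be the only delicate step, precisely because ${\bf L}$ may carry a nontrivial unipotent part: one must check that pairing the elliptic semisimple part of $e^{t{\bf L}}$ with the rotation $k$ cannot create eigenvalues off the unit circle, so that the sole source of dilation is the conformal factor ${\bf H}$. The commuting Jordan decomposition settles this, and the contraction criterion of Corollary~\ref{Corollary: sequence converging to 1} then closes the argument immediately.
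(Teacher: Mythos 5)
Your proof is correct and follows essentially the same route as the paper's: reduce via Theorem~\ref{Theorem 1} to the case where some $\gamma=(a,x)\in\Gamma$ has nontrivial $\R_{\bf H}$-projection, note that ellipticity of the semisimple part of $e^{t{\bf L}}$ (together with $k\in K$ and the scalars $1,2$ of ${\bf H}$ on $\a,\z$) forces all eigenvalues of $\Ad_a$ to have modulus $e^{s}$ or $e^{2s}$, and apply Corollary~\ref{Corollary: sequence converging to 1} to contradict $\Gamma_0\neq\{1\}$ --- the paper compresses exactly this into the phrase ``straightforward consequence''. The only shaky point is your parenthetical alternative: injectivity of $p|_\Gamma$ into $\R^2$ does not by itself bound the rank of $\Gamma$ by $2$ (the image need not be discrete, and e.g.\ $\Z^3$ embeds in $\R$), but this is an aside and your main contradiction with $\Gamma_0$ of rank $n+1$ stands.
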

Note that if the condition in the above proposition holds, it holds for any ${\bf L} + \Phi+ \omega \in \Der(\heis)$, where $\Phi \in \k$ and $\omega \in \heis$. Therefore, it does not depend on the representative in $G$ of the $\R$-action on $\Heis$.   
\begin{proof}
By Theorem \ref{Theorem 1}, the only case to consider is when   $p(\Gamma)=\Z$. In this case, the proposition is a straightforward consequence of Corollary \ref{Corollary: sequence converging to 1}. 
\end{proof}

As mentioned above, Proposition \ref{proposition: cases of isometric actions} recovers \cite[Theorem 1.1]{Leis_Teiss}, which states that for Cahen-Wallach spaces of imaginary type, any discrete subgroup of  $G$ acting properly discontinuously and cocompactly on $X$ is contained in the isometry group.  The reason is that being of imaginary type implies that the $\R_{\bf L}$-action on $\Heis$, which is semisimple for any Cahen-Wallach space, is elliptic. This can be seen by looking at the Heisenberg action on $X$, as  explained below. 
Consider a Cahen-Wallach space 
\begin{equation}\label{Eq: CW metric}
X=(\R^{n+2}, 2dudv + x^\top S\, x\, du^2 + \sum_{i=1}^{n} dx_i^2).  
\end{equation}
The parallel null vector field $V$ is given by $\partial_v$ and the $\F$-foliation, tangent to $V^\perp$,  is defined by the hyperplanes $\R^{n+1} \times \{u\}$, for $u \in \R$.  
The Heisenberg group acts by preserving individually the leaves of $\F$.

Fix the leaf $\{u=0\}$, on which $\Heis$ acts via a specific representation $\pi_0: \Heis \to \Aff(\R^{n+1})$. 
The action on the leaves $\{u=t\}_{t \in \R}$  is then determined by the $\R_{\bf L}$-action on $\Heis$ and $\pi_0$. To describe this action, define for $t \in \R$, the map $\tau^t: (y=(v,x),u) \mapsto (y,u+t)$. This forms a one-parameter group of isometries of $X$, acting transversely to the foliation $\F$. Consider the conjugacy morphism 
\begin{align*}
\mathfrak{P}_t: \Heis &\to  K \ltimes \Heis \\
h &\mapsto \tau^{-t} h \tau^{t}
\end{align*}
By \cite[Lemma 2.4]{article1}, we have $\mathfrak{P}_t(\Heis)=\Heis$ for every $t \in \R$, defining a one-parameter subgroup $\mathfrak{P}: \R \to \Aut(\Heis), t \mapsto \mathfrak{P}_t,$  of $\Aut(\Heis)$. 
Let $h \in \Heis$ and $(y,t) \in \R^{n+1} \times \R$. Then $h$ acts on $(y,t)$ as follows:
$$h \cdot (y,t)=h \circ \tau^t(y,0)=\tau^t \circ \tau^{-t} h \tau^t(y,0)=\tau^t \circ \mathfrak{P}_t(h) (y,0) = (\pi_0 \circ \mathfrak{P}_t(h) (y),t).$$  

This allows us to write the action of $\Heis$   on $X=\R^{n+1} \times \R$ as follows
		\begin{align*}
			 \Heis \times \R^{n+1} \times \R &\to  \R^{n+1} \times \R\\
			(h, (y, u) ) & \mapsto (\pi_0 \circ \mathfrak{P}_{u}(h) (y), u),
		\end{align*} 
		where  $\mathfrak{P}: \R \to \Aut(\Heis)$ is the one-parameter subgroup of $\Aut(\Heis)$ above. It is given by $e^{u {\bf L_1}}$, for some ${\bf L_1}={\bf L} + \Phi + \omega$, $\Phi \in \k, \;\omega \in \heis$. \medskip

On the other hand, the action of $\Heis$ can also be described explicitly in the coordinates $(v,x,u)$. We write it here, and refer to \cite[Section 2]{article1} or \cite[Section 3.2]{Leis_Teiss} for the proof. In the representation $\pi_0$, the center $Z=\R$ acts on $\R^{n+1}$ by translation along the $v$-coordinate, and $(\lambda', \lambda) \in \R^{2n}$, a complement of $Z$, acts through the unipotent affine transformation 	
	$$ \left(\begin{pmatrix}
		1 & \lambda'   \\
		0 & I_n   \\
	\end{pmatrix},
	\begin{pmatrix}
		\frac{ \langle\lambda, \lambda' \rangle}{2}   \\
		\lambda   \\
	\end{pmatrix}\right) 
	\in \SL_{n+1} (\R) \ltimes \R^{n+1}.$$ 
Thus, an element $h=(\lambda',\lambda,c) \in \Heis$, with $(\lambda',\lambda) \in \R^{2n}$ and $ c \in Z$,  acts on $X=\R^{n+1} \times \R$ as $$h \cdot (y,u) = (\pi_0((\beta'(u),\beta(u),c)), u).$$ where  $\beta$ is the solution to the differential equation $\beta''(u)=  S \beta(u)$, with initial conditions $(\lambda',\lambda)$ at $u=0$.     
Now, as explained in \cite[p. 3]{Leis_Teiss}, the imaginary type condition ensures the boundedness of the solutions $\beta(u)$. 
From the previous discussion, it follows that the boundedness of all these solutions is equivalent to the condition that, for $h \in \Heis$, the curves $u \in \R \mapsto \mathfrak{P}_{u}(h) \in \Heis$ are all bounded with respect to $u$. This occurs precisely when the $\R_{\bf L}$-action on $\Heis$ is elliptic.  \\

\subsection{Proper and cocompact similarity non-isometric actions} 
In this paragraph, we construct properly discontinuous and cocompact actions by similarities on homogeneous plane waves, where the action is not isometric.

\begin{example}[An example in dim $4$]\label{Example 1}
Fix a basis $\mathcal{B}=(e_1,e_2,e_3)$ of $\R^3$. Consider the Lie group $P := \R_{\bf L_1} \ltimes \R^3$, where $\R_{\bf L_1}$ acts on $\R^3$ via the one-parameter group $e^{t {\bf L_1}}$, with ${\bf L_1} \in \Der(\R^3)$ given in the basis $\mathcal{B}$ by
$$
{\bf L_1} :=\begin{pmatrix}
- 1 & 0 & 0 \\
0 & - 3 & 0 \\
0 & 0 & 0
\end{pmatrix}.
$$
We define a left-invariant Lorentzian metric $g$ on $P$ such that the induced metric on $\R^3$ is degenerate, with its null direction field collinear to  the left-invariant vector field associated to $e_3$, the fixed point of $e^{t {\bf L_1}}$.  By  \cite[Theorem 3]{Leis}, this induced vector field is parallel (and null), and the space $(P,g)$ is a plane wave. \\
Now, consider the one-parameter subgroup $h(t):=e^{t {\bf H_1}} \subset \Aut(\R^3)$, where $$
{\bf H_1}:=
\begin{pmatrix}
     1 & 0 & 0\\
     0 & 1 & 0 \\
     0 & 0 & 2
\end{pmatrix}.
$$  Since the actions of $\R_{\bf L_1}$ and $\R_{\bf H_1}$ on $\R^3$ commute, $h(t)$ extends naturally to a one-parameter subgroup of $\Aut(P)$, acting trivially on $\R_{\bf L_1}$. Moreover, its action on $(P,g)$ is by homotheties, fixing the point $(1, 0_{\R^3})$. As a result, $(P,g)$  can be identified with the quotient $(\R_{\bf H_1} \times \R_{\bf L_1}) \ltimes \R^3/ \R_{\bf H_1}$.

Define $
{\bf T}:= {\bf L_1} + {\bf H_1} 
$. Consider the Lie subgroup $P_{\bf T}= \R_{\bf T} \ltimes \R^3 \subset (\R_{\bf H_1} \times \R_{\bf L_1}) \ltimes \R^3$, where the action on $\R^3$ is given by $e^{t {\bf T}}$. This subgroup and all its conjugates are transversal to the isotropy and thus acts simply transitively on $P$, preserving its conformal structure. Consequently, $P_{\bf T}$ inherits a left-invariant conformal structure isomorphic to that of $P$. Moreover, the element $x:=e^{\bf T} \in P_{\bf T}$ preserves a lattice $\Gamma_0$ in $P_{\bf T} \cap \R^3$, and the group $\Gamma:= \langle x \rangle \ltimes \Gamma_0$ forms a cocompact lattice in $P_{\bf T}$. As a result, the quotient space $\Gamma \backslash P_{\bf T}$ is compact and locally conformal to $P$, but it does not globally inherit the plane wave metric of $P$. \medskip  
\end{example}
 
In the example above, the plane wave metric on  $P$ was not explicitly specified. However, such an example can be realized using a non-conformally flat Cahen-Wallach space (note that in dimension $3$, all plane waves are conformally flat).
Let $(X,V)$ be a $1$-connected, non-flat, complete homogeneous plane wave. By \cite[Theorem 1.5]{article1}, the connected component of $\Isom(X)$ is given by $G_\rho=(\R_{\bf L} \times K) \ltimes \Heis$,  where the $\R_{\bf L}$-action is defined by $\rho(t)=e^{t {\bf L}}$, ${\bf L} \in \Der(\heis)$, with 
          $${\bf L}=\begin{pmatrix}
           F & B & 0\\
           I & F & 0\\
           0 & 0 & 0
        \end{pmatrix}$$
    written in the decomposition  $\heis=\a^+\oplus \a^- \oplus \z$, with $F$ antisymmetric and $B$ symmetric. 
In dim $4$,  we have $F:=
\begin{pmatrix}
    0 & -a \\
    a & 0
\end{pmatrix}
$ and $B=
\begin{pmatrix}
    b & c\\
    c & d
\end{pmatrix}
$. 
The metric on $X$ defines a Cahen-Wallach space if and only if $[F,B]=0$ and $B$ is non-degenerate \cite[Remark 5.14]{article1}. In this case, \cite[Theorem 6.3]{article1} gives $S=B$ in (\ref{Eq: CW metric}). By \cite[p. 15]{blau_lecture} or \cite[Proposition 3.1]{Leis_Teiss},  
it is conformally flat if and only if $B=\lambda I$ for some $\lambda \in \R$. 
So, for $(a,b,c,d)=(0,6, \sqrt{15},4)$, we obtain a non-conformally flat Cahen-Wallach space. The corresponding matrix ${\bf L}$ is conjugate to the diagonal form
$$\mathrm{Diag}(-3,-1,1,3,0),$$
in the decomposition $\heis_5= \a \oplus \z$. Next, consider the (abelian) ideal $\n$ of $\heis_5$ generated by the center and the eigendirections associated with the eigenvalues $-1$ and $-3$.  Then, $\n$ is preserved by $e^{t {\bf L}}$. Furthermore, the group $P$ from Example \ref{Example 1} embeds in $G_\rho$ as $\R_{\bf L} \ltimes N$, where $N$ is the subgroup of $\Heis_5$ with Lie algebra $\n$.  This group and all its conjugates are transversal to the isotropy. 
Therefore, $P$ embeds as a simply transitive subgroup of $G_\rho$, and thus inherits a left-invariant Cahen-Wallach metric which is isometric to that of $X$.\bigskip

\textbf{Dimension 3.} It is known from \cite{KO, DZ, allout2022homogeneous} that $3$-dimensional Lorentzian compact manifolds locally isometric to Cahen-Wallach spaces do not exist. However, as we will see in Example \ref{Example 2},  conformal compact quotients exist precisely when the $\R_{\bf L}$-action on the Heisenberg group is hyperbolic. 

\begin{example}[Conformal compact quotients of $3$-dim Cahen-Wallach spaces]\label{Example 2}
Let $X$ be a $1$-connected, $3$-dimensional Cahen-Wallach space. From \cite[Theorem 5.13, Remark 5.14]{article1}, we know that the connected component of $\Isom(X)$ is $G_\rho=\R_{\bf L} \ltimes \Heis_3$,  with the $\R_{\bf L}$-action given by $\rho(t)=e^{t {\bf L}}$, ${\bf L} \in \Der(\heis_3)$, such that 
          $${\bf L}=\begin{pmatrix}
           0 & b & 0\\
           1 & 0 & 0\\
           0 & 0 & 0
        \end{pmatrix},$$
written in the decomposition  $\heis_3=\a^+\oplus \a^- \oplus \z$, with $b \in \R \smallsetminus \{0\}$.  
The connected component of the conformal group is $(\R_{\bf H} \times \R_{\bf L})\ltimes \Heis_3$, where 
 $${\bf H}=\begin{pmatrix}
           1 & 0 & 0\\
           0 & 1 & 0\\
           0 & 0 & 2 
        \end{pmatrix}.$$
The space $X$ is a hyperbolic (resp. elliptic) Cahen-Wallach space, meaning that the $\R_{\bf L}$-action on $\Heis_3$ is hyperbolic (resp. elliptic), when $b>0$ (resp. $b<0$). In the elliptic case, we know from Proposition \ref{proposition: cases of isometric actions} that for any conformal compact quotient $\Gamma \backslash X$,  the group $\Gamma$ is contained in the isometry group. However, from \cite{KO, DZ, allout2022homogeneous}, we know that such quotients do not exist. 

We will construct a conformal compact quotient in the hyperbolic case, i.e. for any $b>0$. In this case, the non-zero eigenvalues of ${\bf L}$ are $\pm\sqrt{b}$, with corresponding eigenvectors $X \pm b Y$.  Let $Y_1$ be the eigenvector corresponding to the eigenvalue $-\sqrt{b}$, and define ${\bf T}:=\alpha {\bf L}+ {\bf H}$, $\alpha >0$. The (abelian) ideal $\n \subset \heis_3$, generated by the center of $\heis_3$ and $Y_1$, is preserved by the action of $e^{t {\bf T}}$. Consider the group $P_{\bf T}:=\R_{\bf T} \ltimes N$, where $\R_{\bf T}$ acts on $N$ through the restriction of $e^{t {\bf T}}$ to $N$.  Then,  $P_{\bf T}$ and all its conjugates are transversal to the isotropy, so $P_{\bf T}$ acts simply transitively on $X$, preserving the conformal structure. Consequently, $P_{\bf T}$ inherits a left-invariant conformal structure isomorphic to that of $X$. \\
Moreover, for $\alpha$ such that $-\alpha \sqrt{b} = -3$, 
the restriction of $e^{\bf T}$ to $N$ is conjugate to the hyperbolic matrix $\mathrm{Diag}(e^{2}, e^{-2})$, hence preserves a lattice $\Gamma_0$ in $N$. Therefore, $\Gamma:= \langle x \rangle \ltimes \Gamma_0$, with $x:=e^{\bf T}$, forms a cocompact lattice in $P_{\bf T}$, and the quotient space $\Gamma \backslash P_{\bf T}$ is compact and locally conformal to $X$, but does not inherit  the Cahen-Wallach metric of $X$.
\end{example}

\section{Non-essentiality of the conformal groups}
The aim of this section is to prove the following result

\begin{theorem}\label{Theorem: non-essentiality}
Consider a compact conformal quotient $M=\Gamma \backslash X$.
The conformal group of $M$ is non-essential. More precisely, 
\begin{itemize}
    \item[i)] either $\Gamma$ is contained in the isometry group of $X$, in which case $\Gamma \backslash X$ has a plane wave metric coming from that of $X$,
    \item[ii)] or, it is not contained  in the isometry group of $X$, and in this case,   
    the conformal group preserves a Lorentzian metric on $M$ conformal to the plane wave metric on $X$. 
\end{itemize}   
\end{theorem}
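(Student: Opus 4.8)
The plan is to identify the conformal group of $M$ with a normalizer quotient and then to exhibit, inside the conformal class, one explicit metric fixed by this whole group; non-essentiality will follow from a volume argument. Since $X$ is simply connected, every conformal diffeomorphism of $M=\Gamma\backslash X$ lifts to a conformal diffeomorphism of the universal cover $X$ normalizing $\Gamma$, and by \cite{Alekseevsky2024} the conformal group of $X$ is exactly $\widehat{G}$; hence $\Conf(M)\cong N_{\widehat{G}}(\Gamma)/\Gamma$. So it suffices to produce a Lorentzian metric on $X$, conformal to $g_X$, invariant under $N:=N_{\widehat{G}}(\Gamma)$: it will descend to a $\Conf(M)$-invariant metric in $[g_M]$. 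Throughout I use the similarity character $\chi\colon\widehat{G}\to\R_{>0}$ (with $\ker\chi=\widehat{G}_\rho$ the isometry group and $\log\chi$ a multiple of $p_{\bf H}$), together with the global affine parameter $u\colon X\to\R$ on the leaf line $\xi$, on which $\widehat{G}$ acts affinely and $G$ by translations.

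Next I would build the invariant metric. In case (i), $\Gamma\subset\widehat{G}_\rho$ preserves $g_X$, so $g_X$ descends to a plane wave metric $g_M$ on $M$ and I simply take $g'=g_X$. In case (ii), Theorem \ref{Theorem 1} gives $p(\Gamma)\simeq\Z$, so $\log\chi\vert_\Gamma$ and the $u$-translation homomorphism $\gamma\mapsto b_\gamma$ (defined by $u\circ\gamma=u+b_\gamma$) both factor through $p(\Gamma)\simeq\Z$ and are therefore proportional, with $b_{\hat{\gamma}}\neq0$ since $\hat{\gamma}\notin\ker\pi$. I then set $g':=e^{c_0 u}g_X$ with $c_0:=-\log\chi(\hat{\gamma})/b_{\hat{\gamma}}$; a direct check gives, for $\gamma\in\Gamma$,
\begin{equation*}
\gamma^*g'=e^{c_0(u\circ\gamma-u)}\chi(\gamma)\,g'=e^{c_0 b_\gamma}\chi(\gamma)\,g'=g',
\end{equation*}
so $g'$ is a nowhere-degenerate $\Gamma$-invariant Lorentzian metric conformal to $g_X$ (in case (i), $c_0=0$ and $g'=g_X$).

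The crucial step is to upgrade $\Gamma$-invariance to $N$-invariance. For any $\phi\in N$ one has $u\circ\phi=a_\phi u+b_\phi$ with $a_\phi=\pm1$, because $\phi$ normalizes the infinite cyclic translation group $\overline{\Gamma}\subset\Aff(\R)$ generated by $u\mapsto u+b_{\hat{\gamma}}$. In case (ii) I would rule out $a_\phi=-1$: if $a_\phi=-1$ then $\phi\hat{\gamma}\phi^{-1}\in\Gamma$ induces the translation $u\mapsto u-b_{\hat{\gamma}}$ of $\xi$, hence differs from $\hat{\gamma}^{-1}$ by an element of $\Gamma$ acting trivially on $\xi$, which is necessarily an isometry; since $\chi$ is conjugation-invariant and $\chi\equiv1$ on isometries, this forces $\chi(\hat{\gamma})=\chi(\hat{\gamma})^{-1}$, contradicting $\chi(\hat{\gamma})\neq1$. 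Thus $a_\phi=1$ for all $\phi\in N$, so $\phi^*g'=e^{c_0 b_\phi}\chi(\phi)\,g'$ is a \emph{constant} multiple of $g'$; in case (i) this is automatic as $c_0=0$. Finally $\Conf(M)$ acts on the compact manifold $M$ by similarities of the descended metric, and the volume argument closes the proof: writing $\phi^*g'=c_\phi g'$ and integrating $\phi^*(\mathrm{vol}_{g'})=c_\phi^{(n+2)/2}\,\mathrm{vol}_{g'}$ over $M$ yields $c_\phi=1$, so $g'$ is $\Conf(M)$-invariant and the conformal group is non-essential. The main obstacle is precisely this last upgrade — controlling the leaf-parameter action of the full normalizer and excluding orientation reversal in case (ii) — whereas the construction of $g'$ and the volume computation are routine.
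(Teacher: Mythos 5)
Your proof is correct, but it reaches the conclusion by a genuinely different route for the hard step. The paper also reduces to $N_{\widehat{G}}(\Gamma)$ and also rescales by a function of the leaf parameter $u$, but it first establishes structural control on the normalizer: Lemma \ref{Lemma: normalizer contained in G} forces $N_{\widehat{G}}(\Gamma)\subset G$, and Proposition \ref{Proposition: N_G(Gamma)} (via the Zassenhaus-neighborhood argument of Lemma \ref{lemma: description of projection of N_G(Gamma)}) shows that $p(N_G(\Gamma))$ is cyclic; only then does Lemma \ref{Lemma: construction of conformal metric} build, by interpolation, an $\F$-constant factor $e^f$ making the cyclic generator an exact isometry, so that the rescaled metric is $N_G(\Gamma)$-invariant already on $X$. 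You instead take the explicit linear factor $f=c_0u$ determined by $\Gamma$ alone, observe that every normalizing element then acts as a similarity of $e^{c_0u}g$ with \emph{constant} ratio (once $a_\phi=1$ is secured), and kill that ratio downstairs by the volume argument on the compact quotient --- the same mechanism as Observation \ref{observation: Gamma contained in Isom(X)}. This bypasses Proposition \ref{Proposition: N_G(Gamma)} and Lemma \ref{lemma: description of projection of N_G(Gamma)} entirely, which is a real simplification; what the paper's route buys in exchange is the finer structural description of $\Conf(M)$ as a cyclic extension of a subgroup of $K\ltimes\Heis$, which is of independent interest but not needed for non-essentiality. Your exclusion of $a_\phi=-1$ via conjugation-invariance of the similarity character is also a legitimate, more elementary substitute for Lemma \ref{Lemma: normalizer contained in G}.

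Two assertions should be justified rather than stated: (a) $b_{\hat{\gamma}}\neq 0$, equivalently $p_{\bf L}(\hat{\gamma})\neq 1$; and (b) an element of $\Gamma$ acting trivially on $\xi$ is an isometry. Both follow from Corollary \ref{Corollary 1}: an element of $G$ with trivial $\R_{\bf L}$-projection but non-trivial $\R_{\bf H}$-projection has a fixed point and infinite order, which is incompatible with proper discontinuity (see also Remark \ref{Remark: proper action of Gamma}). With these one-line additions, your argument is complete.
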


Let $M=\Gamma \backslash X$ be a (compact) conformal quotient of $X$. The conformal group of $M$ is given by $N_{\widehat{G}}(\Gamma)$, the normalizer of $\Gamma$ in the conformal group $\widehat{G}$. 
The following remark states that to prove the non-essentiality of the conformal group, it is sufficient to prove it on a normal covering space. Therefore, there is no harm in replacing $\Gamma$ by the finite index subgroups obtained in Theorem \ref{Theorem: non-essentiality}.  And, as in the previous section, we will assume that  $\Gamma \subset G$. 

\begin{remark}\label{Remark: replace by a cover}
Let $X$ be a $1$-connected conformal Lorentzian space, and let $M=\Gamma \backslash X$  be a conformal quotient, where $\Gamma$ is a discrete subgroup of $G:=\Conf(X)$. Consider a covering space  $M_0=\Gamma_0 \backslash X$ of $M$, where $\Gamma_0$ is a normal subgroup of $\Gamma$. If $\Conf(M_0)$ is non-essential on $M_0$, then $\Conf(M)$ is non-essential on $M$. Indeed, $\Conf(M_0)=N_G(\Gamma_0)$ contains $\Gamma$, so a Lorentzian metric in the conformal class,  preserved by $\Conf(M_0)$, is $\Gamma$-invariant. Thus, it descends to a Lorentzian metric on $M$, which is preserved by $\Conf(M)$, as $\Conf(M)$ is contained in $\Conf(M_0)$. This is in particular the case when $\Gamma_0=\{0\}$, and $M_0=X$ is the universal cover of $M$. 
\end{remark}

\begin{observation}\label{observation: Gamma contained in Isom(X)}
Consider a compact conformal quotient $M=\Gamma \backslash X$. When $\Gamma$ is contained in the isometry group of $X$, the compact quotient $M:=\Gamma \backslash X$ inherits an induced Lorentzian metric, and the conformal group of $M$ acts by homotheties. However, since $M$ is compact, it does not admit any pure homothetic transformation (see \cite[Corollary 2.1]{Alekseevski_selfsimilar}). Consequently, the conformal group of $M$ coincides with its isometry group.     
\end{observation}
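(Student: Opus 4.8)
The plan is to follow the three-step structure already indicated in the statement, supplying the descent arguments that make each step precise. First I would record the consequence of the hypothesis $\Gamma \subset \Isom(X)$: the plane wave metric $g$ on $X$ is then $\Gamma$-invariant, and since $\Gamma$ acts freely and properly discontinuously (so that $M=\Gamma \backslash X$ is a manifold), $g$ descends to a well-defined Lorentzian metric $\bar g$ on $M$. The conformal structure of $M$ is exactly $[\bar g]$, and this realizes $(M,[\bar g])$ as a compact Lorentzian conformal manifold whose universal cover is $(X,g)$ with deck group $\Gamma$.

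Next I would locate the conformal group of $M$ inside the similarity group of $X$. As recorded above, $\Conf(M)=N_{\widehat G}(\Gamma)$, so every conformal transformation of $M$ is represented by a lift $\Phi \in N_{\widehat G}(\Gamma) \subset \widehat G = \Conf(X,g)$. By the theorem quoted earlier (\cite[Theorem 2]{Alekseevsky2024}), every element of $\widehat G$ is a similarity, i.e. satisfies $\Phi^* g = c(\Phi)\, g$ for a constant $c(\Phi) \in \R_{>0}$, and $\Phi \mapsto c(\Phi)$ is a group homomorphism $\widehat G \to \R_{>0}$ (from $(\Phi\Psi)^*g = \Psi^*\Phi^* g = c(\Phi)c(\Psi)\,g$). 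Because $\Phi$ normalizes $\Gamma$ and $g$ is $\Gamma$-invariant, the relation $\Phi^* g = c(\Phi)\, g$ passes to the quotient: the induced map $\bar\Phi$ on $M$ satisfies $\bar\Phi^* \bar g = c(\Phi)\, \bar g$. Hence $\Conf(M)$ acts on $(M,\bar g)$ by homotheties, with a well-defined homothety-factor homomorphism $\bar c : \Conf(M) \to \R_{>0}$ that is trivial on $\Gamma$ (which acts by isometries).

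Finally I would invoke compactness to kill the homothety factor. A pure homothety is an element with $\bar c \neq 1$; by \cite[Corollary 2.1]{Alekseevski_selfsimilar}, a compact pseudo-Riemannian manifold admits no pure homothety. Since $M$ is compact, $\bar c \equiv 1$, so every element of $\Conf(M)$ is an isometry of $(M,\bar g)$; that is, $\Conf(M) = \Isom(M,\bar g)$. In particular $\Conf(M)$ preserves the metric $\bar g$ in its conformal class, which is exactly non-essentiality in this case. The argument is essentially formal once the two inputs are in place: the rigidity of the model, namely that $\widehat G$ consists of similarities, and the compactness obstruction to pure homotheties. The only genuinely delicate point is this compactness obstruction, which in the pseudo-Riemannian setting is supplied by the cited result of Alekseevsky; the remaining steps amount to verifying that the similarity factor is a homomorphism and descends correctly to the quotient.
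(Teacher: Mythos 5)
Your proposal is correct and follows essentially the same route as the paper, which treats this as an observation whose proof is exactly the three steps you spell out: descent of the $\Gamma$-invariant metric to $M$, the fact that $\Conf(X,g)$ consists of similarities (so lifts in $N_{\widehat G}(\Gamma)$ induce homotheties of $\bar g$), and the compactness obstruction from \cite[Corollary 2.1]{Alekseevski_selfsimilar}. Your added details (the homothety-factor homomorphism and its descent to the quotient) are accurate fillings-in of what the paper leaves implicit, not a different argument.
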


By Observation \ref{observation: Gamma contained in Isom(X)}, the only case to see is when $\Gamma \not \subset \Isom(M)$. This will be the subject of the next paragraph. 

\subsection{Normalizer of the fundamental group} This paragraph aims to prove the following proposition, which gives a description of the normalizer of the fundamental group, and which is crucial for the proof of Theorem \ref{Theorem: non-essentiality}.

\begin{proposition}\label{Proposition: N_G(Gamma)}
Consider a compact conformal quotient $M=\Gamma \backslash X$. 
\begin{enumerate}
    \item  If $\Gamma \subset \Isom(X)$, then  $N_G(\Gamma) \subset \Isom(X)$. 
    \item   If $\Gamma \not \subset \Isom(X)$, then $N_G(\Gamma)$ is a cyclic extension of a subgroup of  $K \ltimes \Heis$.   
\end{enumerate}  
\end{proposition}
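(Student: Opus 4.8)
The plan is to prove the two cases of Proposition~\ref{Proposition: N_G(Gamma)} separately, relying heavily on the structural description of $\Gamma$ furnished by Theorem~\ref{Theorem 1}. For case (1), suppose $\Gamma \subset \Isom(X) = \widehat{G}_\rho$, so that $p_{\bf H}(\Gamma) = \{1\}$; I want to show the normalizer cannot introduce a nontrivial $\R_{\bf H}$-component. Let $g \in N_G(\Gamma)$ and suppose for contradiction that $p_{\bf H}(g) \neq 1$. Since $g$ normalizes $\Gamma$, conjugation by $g$ preserves $\Gamma$, and in particular $p(g \gamma g^{-1}) = p(g)p(\gamma)p(g)^{-1} = p(\gamma)$ because $p(G)=\R_{\bf H}\times\R_{\bf L}$ is abelian. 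So conjugation by $g$ fixes the projection $p(\Gamma)$ pointwise. The idea is then to examine the action of $\Ad_g$ on the finite-index abelian piece $\Gamma \cap \Heis$ (or $\Gamma\cap(K\ltimes\Heis)$): because $\Gamma$ acts cocompactly, this lattice-like subgroup is ``large'' (it generates a codimension-one nilpotent piece by the cohomological-dimension argument of Proposition~\ref{Proposition: p(Gamma)=RxR}), and a nontrivial $\R_{\bf H}$-action scales the center of $\Heis$ by a factor $\lambda \neq 1$. If $\Gamma\cap Z \neq\{1\}$ this immediately contradicts that $g$ normalizes the \emph{discrete} group $\Gamma$, since $g^k z g^{-k}$ would accumulate at $1$ while remaining in $\Gamma$ (the argument of Lemma~\ref{Lemma: Gamma does not intersect the center}). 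Since $\Gamma\subset\Isom(X)$ is nonabelian in the generic cocompact situation, it does meet $Z$, yielding the contradiction; hence $p_{\bf H}(g)=1$ and $g \in \Isom(X)$.

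For case (2), assume $\Gamma \not\subset \Isom(X)$. By Theorem~\ref{Theorem 1} and Proposition~\ref{Proposition: p(Gamma)=Z}, up to finite index $\Gamma \simeq \langle \hat\gamma\rangle \ltimes \Gamma_0$ with $\Gamma_0 \subset \Heis$ abelian of rank $n+1$, and $p_{\bf H}(\hat\gamma)\neq 1$, so that $\Ad_{\hat\gamma}$ scales the center $Z$ by some $\lambda\neq1$. Let $g\in N_G(\Gamma)$. The key step is to show $p_{\bf L}(g)=p_{\bf H}(g)^{s}$ is forced onto a one-dimensional locus and, more precisely, that $g$ differs from a power of $\hat\gamma$ by an element of $K\ltimes\Heis$. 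I would argue as follows: conjugation by $g$ preserves $\Gamma$, hence preserves its unique maximal normal nilpotent (indeed abelian up to finite index) subgroup, which is $\Gamma_0 = \Gamma\cap\Heis$ (this is canonical: it is the intersection of $\Gamma$ with the kernel $K\ltimes\Heis$ of $p$, characterized group-theoretically as in Proposition~\ref{Proposition: p(Gamma)=Z}). Therefore $g$ normalizes $\Gamma_0$, and since the Malcev closure $N_0$ of $\Gamma_0$ is $\Ad_g$-invariant, $\Ad_g$ preserves $N_0$ together with its intersection with $Z$.

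Now consider the induced action of $g$ on the quotient $p(\Gamma)\simeq\Z$: since $p(G)$ is abelian, conjugation by $g$ acts trivially on $p(\Gamma)$, so $p(g\hat\gamma g^{-1})=p(\hat\gamma)$, which pins $p(g)$ to lie in the centralizer structure but does \emph{not} yet bound $g$. The decisive point is to use discreteness together with the contracting dynamics on $Z$: write $g=(a,x)$ and analyze the sequence $\hat\gamma^k g \hat\gamma^{-k}$. Because $\Ad_{\hat\gamma}$ is contracting (up to sign of iteration) on the center and on the relevant eigendirections, while $N_G(\Gamma)$ must itself be discrete modulo the compact $K$-directions, I expect to force $p_{\bf L}(g)$ and $p_{\bf H}(g)$ to lie in the cyclic group generated by $p(\hat\gamma)$—after possibly passing to the finite-index subgroup and absorbing the $K$-part. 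Concretely, the projection $r(N_G(\Gamma)) \subset \R_{\bf H}\times\R_{\bf L}\times K$ must normalize the discrete $r(\Gamma)$, and since $p(\Gamma)\simeq\Z$ is a lattice in the line $\overline{p(\Gamma)}=\R$, its normalizer in the abelian $\R_{\bf H}\times\R_{\bf L}$ is the full line only up to the discreteness constraint coming from the $Z$-scaling; this forces $p_{\bf H}(N_G(\Gamma))$ to be cyclic, generated by $p_{\bf H}(\hat\gamma)$. Thus $N_G(\Gamma) = \langle \hat\gamma'\rangle \cdot (N_G(\Gamma)\cap(K\ltimes\Heis))$ for a suitable generator $\hat\gamma'$, which is exactly the claimed cyclic extension of a subgroup of $K\ltimes\Heis$.

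The main obstacle I anticipate is the final quantitative step in case (2): ruling out a continuum of normalizing elements with nontrivial $\R_{\bf H}$-projection, i.e.\ showing $p_{\bf H}(N_G(\Gamma))$ is cyclic rather than dense or all of $\R_{\bf H}$. The clean way to handle this is to combine the $\Ad$-invariance of $N_0\cap Z$ (a fixed line on which $\R_{\bf H}$ acts by a nontrivial character) with the discreteness of $\Gamma$: any $g$ normalizing $\Gamma$ conjugates $\hat\gamma$ to an element whose $\R_{\bf H}$-eigenvalue on $Z$ equals that of $\hat\gamma$, and the set of admissible such eigenvalues compatible with preserving the lattice $\Gamma_0$ and the discrete $p(\Gamma)$ is cyclic. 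Making this rigorous will likely reuse the accumulation-at-$1$ trick of Lemmas~\ref{Lemma: Gamma does not intersect the center} and~\ref{Corollary: sequence converging to 1} one more time, now applied to the normalizer rather than to $\Gamma$ itself.
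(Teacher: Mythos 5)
Your skeleton for part (2) --- reduce to $N_G(\Gamma_0)$, note that $p(N_G(\Gamma))$ is a subgroup of $\R_{\bf H}\times\R_{\bf L}$ containing $p(\hat\gamma)\neq 1$, and exclude every possibility except a cyclic projection --- matches the paper's strategy, and the two-dimensional cases ($\R^2$, $\R\times\Z$, $\Z^2$) are indeed killed by Corollary \ref{Corollary: sequence converging to 1} because $\Gamma_0\neq\{1\}$. But the step you yourself flag as the main obstacle is exactly where the proof lives, and the mechanism you propose for it does not work. You want to rule out $\overline{p(N_G(\Gamma))}\simeq\R$ by arguing that the $Z$-eigenvalue of the automorphism of $\Gamma_0$ induced by a normalizing element must range over a cyclic set because it preserves a lattice. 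This is false in general: the induced maps form a subgroup of $\Aut(\Gamma_0)\simeq\GL_{n+1}(\Z)$ preserving the line $N_0\cap Z$, and the eigenvalue character of such a subgroup can have \emph{dense} image in $\R_{>0}$ (consider the rank $\geq 2$ unit group of a totally real number field acting on its ring of integers: all units share common eigenlines and the eigenvalue map has dense image). So discreteness of $\Aut(\Gamma_0)$ does not by itself force $p_{\bf H}(N_G(\Gamma))$ to be cyclic. The paper's Lemma \ref{lemma: description of projection of N_G(Gamma)} disposes of the dense one-dimensional case by a different route: one extracts $g=(a,x)\in N_G(\Gamma_0)$ with $r(g)$ in a Zassenhaus neighborhood, applies the homothety automorphism $\Psi$ of $\Heis$ to push the generators of $\langle g\rangle\ltimes\Gamma_0$ into that neighborhood, concludes that $\langle g\rangle\ltimes\Gamma_0$ is nilpotent, and then combines nilpotency with the unimodularity of $a\vert_{N_0}$ (using $Z\subset N_0$) to force $a$ to act trivially on $Z$, i.e.\ $p_{\bf H}(g)=1$. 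Some such global argument is needed; the pointwise eigenvalue constraint is not enough.

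Part (1) also has a gap: your accumulation argument needs $\Gamma\cap Z\neq\{1\}$, which you only assert ``in the generic cocompact situation.'' It can fail: $\Gamma_0$ is a lattice in an abelian Malcev closure $N_0\supset Z$, and a lattice in $\R^{n+1}$ need not meet a prescribed line. The paper's proof of (1) is soft and does not look at $Z$ at all: since $\Gamma\subset\Isom(X)$, the compact quotient $M$ carries the induced plane wave metric, every element of $N_G(\Gamma)$ descends to a homothety of $M$, and a compact pseudo-Riemannian manifold admits no strict homothety (Observation \ref{observation: Gamma contained in Isom(X)}); hence $N_G(\Gamma)\subset\Isom(X)$. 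I would adopt that argument rather than try to patch the dynamical one.
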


In the proposition above, we are considering $N_G(\Gamma)$ instead of the normalizer in the full conformal group. This is justified by the lemma below.

\begin{lemma}\label{Lemma: normalizer contained in G}
Let $\Gamma \backslash X$ be a compact conformal quotient. If $\Gamma\not\subset \Isom(X)$, then the normalizer of $\Gamma$ in the full conformal group $\widehat{G}$ is contained in $G$. 
\end{lemma}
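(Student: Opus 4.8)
The plan is to show that any $g \in N_{\widehat{G}}(\Gamma)$ has trivial image in the component group $\widehat{G}/G$. Since $G$ is the identity component, this quotient is generated by the two ``extra'' directions of $\widehat{G}$ over $G$: the $V$-reversing involution $\sigma \in E_{\bf L}(1)$, and the finite group $\widehat{K}/K$. Throughout I use the structure from Proposition \ref{Proposition: p(Gamma)=Z}: since $\Gamma \not\subset \Isom(X)$, up to finite index $\Gamma = \langle \hat{\gamma} \rangle \ltimes \Gamma_0$ with $\Gamma_0 = \Gamma \cap \Heis$ abelian and $p_{\bf H}(\hat{\gamma}) = e^{s{\bf H}}$, $s \neq 0$. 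Moreover $p_{\bf L}(\hat{\gamma}) = e^{s'{\bf L}}$ with $s' \neq 0$: otherwise $\hat{\gamma}$ would have $p_{\bf L}(\hat{\gamma}) = 1$ and $p_{\bf H}(\hat{\gamma}) \neq 1$, hence a fixed point on $X$ by Corollary \ref{Corollary 1}, contradicting freeness of the action. These two nonvanishing exponents are the inputs I would exploit.

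First I would eliminate the $\sigma$-part. Since $\widehat{K} \ltimes \Heis$ is normal in $\widehat{G}$, there is a quotient homomorphism $q \colon \widehat{G} \to \R_{\bf H} \times E_{\bf L}(1)$, under which $q(\Gamma_0) = 1$ and $q(\Gamma) = \langle (e^{s{\bf H}}, e^{s'{\bf L}}) \rangle \cong \Z$ lies in the identity component $\R_{\bf H} \times \R_{\bf L}$. As $g$ normalizes $\Gamma$, $q(g)$ normalizes $q(\Gamma)$. Writing $q(g) = (e^{t{\bf H}}, w)$ with $w \in E_{\bf L}(1) = \langle\sigma\rangle \ltimes \R_{\bf L}$, the Euclidean group of $\R$, and using that $\R_{\bf H}$ is a central direct factor, conjugation fixes the $\R_{\bf H}$-coordinate and acts on the translation $e^{s'{\bf L}}$ by $\pm 1$, the sign being $-1$ exactly when $w$ has nontrivial $\sigma$-part. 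Membership of $q(g)\, q(\hat{\gamma})\, q(g)^{-1}$ in $q(\Gamma)$ forces the $\R_{\bf H}$-exponent to equal $s$, so the conjugate must be $q(\hat{\gamma})$ itself; its $\R_{\bf L}$-translation is then $e^{s'{\bf L}}$ rather than $e^{-s'{\bf L}}$. Since $s' \neq 0$, the $\sigma$-part of $w$ vanishes, and $g \in (\R_{\bf H} \times \R_{\bf L} \times \widehat{K}) \ltimes \Heis$.

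It remains to prove that $\kappa := p_{\widehat{K}}(g)$ lies in $K$, and this is where I expect the real work to be. Because $\Heis \triangleleft \widehat{G}$, we have $g\,\Gamma_0\,g^{-1} = \Gamma_0$, so $g$ normalizes the Malcev closure $N_0$ of $\Gamma_0$ and acts on $\n_0$. On the center $\z \subset \n_0$ the element $\hat{\gamma}$ acts by $e^{2s} \neq 1$ while $\kappa$ acts trivially; on $\n_0/\z \subset \a$ one checks that $\kappa$ commutes with the return map $\Ad_{\hat{\gamma}}$, the discrepancy being an inner automorphism of $\Heis$ and hence trivial on $\a$. Using Remark \ref{Remark: proper action of Gamma} (so $\n_0 \cap \a^+ = \{0\}$ and $\dim \n_0 = n+1$) together with Proposition \ref{proposition: cases of isometric actions} (the $\R_{\bf L}$-action is non-elliptic, as $\Gamma \not\subset \Isom(X)$), I would analyze the finite-order orthogonal symmetry $\kappa$ that commutes with the partially hyperbolic $\Ad_{\hat{\gamma}}$ and preserves the lattice $\Gamma_0$, and argue it must lie in the identity component $K$ of $\widehat{K}$. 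The main obstacle is precisely this compact-direction step: ruling out the nontrivial components of $\widehat{K}$ amounts to controlling torsion in the centralizer of $\Ad_{\hat{\gamma}}$ acting on $\Gamma_0$, whereas the $\sigma$-direction is dispatched cleanly by the one-dimensional affine dynamics on the leaf space. Combining both steps gives $g \in (\R_{\bf H} \times \R_{\bf L} \times K) \ltimes \Heis = G$, proving the lemma.
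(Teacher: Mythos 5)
Your first step---eliminating the $\sigma$-component---is correct, and it takes a genuinely different route from the paper. The paper argues dynamically: if $g\in N_{\widehat{G}}(\Gamma)$ projects non-trivially to $\langle\sigma\rangle$, it multiplies $g$ by an element of $\Gamma$ with non-trivial $\R_{\bf H}$-part, so that $g^2$ projects trivially to $E_{\bf L}(1)$ but non-trivially to $\R_{\bf H}$; then $\Ad_{g^{\pm 2}}$ is a contraction of $\heis$, and Corollary \ref{Corollary: sequence converging to 1} forces $\Gamma\cap\Heis=\{1\}$, contradicting the description of $\Gamma$ in Theorem \ref{Theorem 1}. You instead work entirely in the quotient $\R_{\bf H}\times E_{\bf L}(1)$, using that $q(\Gamma)\simeq\Z$ is generated by $(e^{s{\bf H}},e^{s'{\bf L}})$ with $s\neq 0$ by hypothesis and $s'\neq 0$ by Corollary \ref{Corollary 1} plus freeness, so that conjugation by a $\sigma$-type element would have to fix the $\R_{\bf H}$-exponent while flipping the sign of the $\R_{\bf L}$-exponent, which is impossible. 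This is more elementary (one-dimensional affine algebra on the leaf space rather than contraction dynamics), and it is sound.

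Your second step is where the proposal falls short of the literal statement: you rightly note that ``contained in $G$'' also requires killing the $\widehat{K}/K$-component, you sketch a strategy, and you explicitly leave it open as ``the main obstacle.'' As written this is a gap---nothing in the sketch excludes, say, a reflection in $\widehat{K}\smallsetminus K$ preserving $\Gamma_0$ and commuting with $\hat{\gamma}$. You should know, however, that the paper's own proof is silent on exactly this point: it only rules out the $\sigma$-direction and stops, so it too establishes only $N_{\widehat{G}}(\Gamma)\subset(\R_{\bf H}\times\R_{\bf L}\times\widehat{K})\ltimes\Heis$. This weaker containment is all that is actually used later: in the proof of Theorem \ref{Theorem: non-essentiality} every element of $\widehat{K}\ltimes\Heis$ acts trivially on the leaf space and hence preserves any metric $e^f g$ with $f$ an $\F$-constant function. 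So your step one is a valid alternative to what the paper proves, and your step two flags a real imprecision in the statement without resolving it---but the paper does not resolve it either.
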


\begin{proof}
Assume that there exists an element $g \in N_{\widehat{G}}(\Gamma)$ whose projection in $E_{\bf L}(1) \times \R_{\bf H}$ has a non-trivial projection on $\langle \sigma \rangle$. Up to composing by an element in $\Gamma$ with non-trivial $\R_{\bf H}$-component, we may assume that $g$ has also a non-trivial projection on $\R_{\bf H}$. Then $g^2$ has a trivial projection on $E_{\bf L}(1)$, and a non-trivial projection on $\R_{\bf H}$. Consequently, either $\Ad_{g^2}$ or $\Ad_{g^{-2}} \in \Aut(\heis)$ has all its eigenvalues with norm  $< 1$. By Corollary \ref{Corollary: sequence converging to 1}, this yields $\Gamma \cap \Heis = \{1\}$. This contradicts the description of $\Gamma$ obtained in Theorem \ref{Theorem 1}.         
\end{proof}

To prove Proposition \ref{Proposition: N_G(Gamma)}, we start with the following lemma.

\begin{lemma}\label{lemma: description of projection of N_G(Gamma)}
Let $\Gamma_0 \subset G$ be a discrete subgroup contained in $ \Heis$, and let $N_0$ be its Malcev closure in $\Heis$. Consider $Q:=p(N_G(\Gamma_0))$, which is an abelian subgroup of $\R_{\bf H} \times \R_{\bf L} \simeq \R^2$.
\begin{enumerate}
    \item If $Q$ is discrete, then either $Q = \{1\}$ or $Q \simeq \Z$. 
    \item If $Q$ is non-discrete, then $\overline{Q}\simeq \R$. In this case, if $Z \subset N_0$, then $N_G(\Gamma_0) \subset \Isom(X)$. 
\end{enumerate}
\end{lemma}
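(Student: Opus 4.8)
The whole argument hinges on the canonicity of the Malcev closure: any $g\in N_G(\Gamma_0)$ conjugates $\Gamma_0$ to itself, hence normalizes $N_0$, so $\Ad_g$ restricts to an automorphism of $\n_0$ preserving the lattice $\Gamma_0$. In a Malcev basis this realizes $\Ad_g|_{\n_0}$ as an integral matrix of determinant $\pm1$; in particular $|\det(\Ad_g|_{\n_0})|=1$ and all its eigenvalues are algebraic units. Two elementary facts will be used constantly. First, for every $g\in G$ the centre $\z$ is $\Ad_g$-invariant and $\Ad_g|_{\z}$ is the scalar $e^{2s(g)}$, where I write $p_{\bf H}(g)=e^{s(g){\bf H}}$ and $p_{\bf L}(g)=e^{t(g){\bf L}}$: indeed ${\bf H}$ acts on $\z$ with weight $2$, while ${\bf L}$, $K$ and the inner part from $\Heis$ act trivially there. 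Second, since $\R_{\bf H}\times\R_{\bf L}\times K$ is a \emph{direct} product inside $\Aut(\Heis)$, the factor $k=p_K(g)$ commutes with $e^{t(g){\bf L}}$, and both commute with the scalar $e^{s(g){\bf H}}$ on $\a$.

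\emph{Step 1: a modular character puts $Q$ on a line.} I would evaluate $1=|\det(\Ad_g|_{\n_0})|$ through the $\Ad_g$-invariant filtration $\n_0\cap\z\subset\n_0$. On $\n_0\cap\z$ the map is the scalar $e^{2s(g)}$; on $\bar\n_0:=\n_0/(\n_0\cap\z)\hookrightarrow\a$ the inner part acts trivially and ${\bf H}$ contributes the scalar $e^{s(g)}$, so the induced operator is $e^{s(g)}$ times the restriction of $R_g:=\big(k\,e^{t(g){\bf L}}\big)|_{\a}$. Because $k$ is elliptic and commutes with $e^{t(g){\bf L}}$, the eigenvalues of $R_g$ all have moduli $e^{t(g)\Re\mu_i}$, the $\mu_i$ being the eigenvalues of ${\bf L}|_{\a}$; hence the product of the eigenvalue moduli of $R_g|_{\bar\n_0}$ equals $e^{t(g)c(g)}$ for some $c(g)=\sum_{i\in I(g)}\Re\mu_i$ taking only finitely many values. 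Collecting factors, $|\det(\Ad_g|_{\n_0})|=e^{w\,s(g)+c(g)t(g)}$ with $w=\dim\n_0+\dim(\n_0\cap\z)>0$, so each $g$ satisfies $w\,s(g)+c(g)t(g)=0$. Thus $Q$ lies in a finite union of lines through the origin of $\R_{\bf H}\times\R_{\bf L}$; being a subgroup it must lie in a single such line $\ell\cong\R$ (two independent elements on distinct lines would, by pigeonhole on the directions $q_1+nq_2$, force two of them onto one line and hence be dependent).

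\emph{Step 2: the dichotomy.} With $Q\subset\ell\cong\R$ both statements are immediate: a discrete subgroup of $\R$ is $\{1\}$ or $\cong\Z$, which is (1); and a non-discrete subgroup is dense in $\ell$, giving $\overline Q\cong\R$, the first assertion of (2). \emph{Step 3, easy case.} Suppose now $Q$ is non-discrete and $\z\subset\n_0$ (so $\dim(\n_0\cap\z)=1$ and $e^{2s(g)}$ is a genuine eigenvalue of $\Ad_g|_{\n_0}$). If $N_0$ is non-abelian, then $\z=[\n_0,\n_0]$ is rational and $\Gamma_0\cap Z$ is a nontrivial rank-one lattice in $Z$; since $\Ad_g$ preserves it and scales it by $e^{2s(g)}$, we get $e^{2s(g)}=1$, i.e. $p_{\bf H}\equiv1$ and $N_G(\Gamma_0)\subset\Isom(X)$.

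\emph{Step 3, the genuine obstacle.} The delicate case is $N_0$ abelian, where $\z$ may sit inside $N_0$ as an \emph{irrational} eigendirection: this is exactly the Anosov-type situation of Example~\ref{Example 2}, and it is the reason the statement restricts to non-discrete $Q$ (for discrete $Q$ the conclusion genuinely fails). Here no rationality of $Z$ is available, and I would argue by finiteness instead. Assume $\ell\neq\R_{\bf L}$; then the constant slope of $\ell$ is $c_0\neq0$, so $t(g)=-w\,s(g)/c_0$ and $p_{\bf H}$ is injective on $\ell$. Non-discreteness of $Q$ supplies infinitely many distinct $g_n\in N_G(\Gamma_0)$ with $s(g_n)$ bounded; then $t(g_n)$ is bounded too, so by Step 1 all eigenvalue moduli of the integral matrices $\Ad_{g_n}|_{\n_0}$ are bounded, whence their monic characteristic polynomials have bounded coefficients and take only finitely many values. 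Some polynomial recurs infinitely often, pinning the eigenvalue $e^{2s(g_n)}$ to a finite set of roots and so $s(g_n)$ to a finite set --- contradicting that the $s(g_n)$ are pairwise distinct. Therefore $\ell=\R_{\bf L}$, that is $p_{\bf H}(N_G(\Gamma_0))=1$, giving $N_G(\Gamma_0)\subset\Isom(X)$. I expect this last case --- reconciling the algebraic-unit constraint with non-discreteness when the centre is irrational --- to be the main point of the proof, the earlier steps being essentially bookkeeping with the $\Heis$-action and the determinant.
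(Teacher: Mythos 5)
Your proof is correct, but it takes a genuinely different route from the paper's. To confine $Q$ to a line you exploit unimodularity of the lattice-preserving automorphism $\Ad_g|_{\n_0}$ to extract the linear relation $w\,s(g)+c(g)t(g)=0$ with $c(g)$ ranging in a finite set; this kills all rank-two closures in one stroke. The paper instead rules out $\R^2$, $\R\times\Z$ and $\Z^2$ dynamically, by producing $g\in N_G(\Gamma_0)$ whose adjoint action on $\heis$ is a contraction and invoking discreteness via Corollary~\ref{Corollary: sequence converging to 1}. For the second assertion of (2) the divergence is larger: the paper first shows $r(N_G(\Gamma_0))$ accumulates at the identity, then uses a Zassenhaus neighbourhood together with the grading automorphism $\Psi$ of $\Heis$ to prove that $\tilde\Gamma=\langle g\rangle\ltimes\Gamma_0$ is nilpotent, and finally combines nilpotency (which forces $(a-I)^k(\n_0)\subset\z$) with unimodularity of $a|_{\n_0}$ to pin the eigenvalue on $Z$ to $1$; you replace all of this by arithmetic finiteness --- rationality of $\Gamma_0\cap Z$ in the non-abelian case, and in the abelian case the observation that the matrices $\Ad_{g_n}|_{\n_0}\in\GL_d(\Z)$ along a bounded infinite family have uniformly bounded eigenvalues, hence finitely many characteristic polynomials, forcing $e^{2s(g_n)}$ into a finite set. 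Both proofs ultimately rest on unimodularity of $\Ad_g$ on $\n_0$ and both use non-discreteness of $Q$ essentially (the paper to bring $r(g)$ near $1$, you to produce the infinite bounded family), but your version bypasses the Zassenhaus/nilpotency machinery and is more self-contained. Two minor caveats, neither fatal: like the paper's argument, your Step 1 tacitly assumes $\Gamma_0\neq\{1\}$ (otherwise $w=0$ and the relation is vacuous), which is harmless in the setting where the lemma is applied; and the integrality of $\Ad_{g_n}|_{\n_0}$ in a basis of $\Gamma_0$ is only literally available when $N_0$ is abelian, which is precisely the case in which you invoke it.
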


The following fact is needed in the proof. 
\begin{fact}\cite[Theorem 4.1.6]{Thurston}\label{Thurston: strongly Zassenhaus neighborhood}
Let $H$ be a Lie group. There exists a neighborhood $U$ of $1$ in $H$ such that any discrete subgroup $\Gamma$ of $H$ generated by $U \cap \Gamma$ is nilpotent. 
We call such an identity neighborhood $U$ a \textit{Zassenhaus neighborhood}.
\end{fact}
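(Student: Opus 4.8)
\emph{The plan is to} prove this classical Zassenhaus neighborhood result by transferring the problem into the Lie algebra $\mathfrak{h}$ of $H$ via the exponential map and exploiting the fact that the commutator of two elements close to $1$ is \emph{quadratically} close to $1$. Fix an auxiliary norm $\|\cdot\|$ on $\mathfrak{h}$ and a radius $r_0>0$ so that $\exp$ is a diffeomorphism from the ball $B(0,r_0)$ onto its image; for $g$ in this image set $|g|:=\|\log g\|$. The first step is to record the commutator estimate coming from the Baker--Campbell--Hausdorff formula: since $\log(ghg^{-1}h^{-1})=[\log g,\log h]+(\text{terms of order}\ge 3)$, there are a constant $C>0$ and a radius $r_1\le r_0$ such that $|[g,h]|\le C\,|g|\,|h|$ whenever $|g|,|h|<r_1$.

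The second step is to fix the neighborhood. Choose $r\le r_1$ small enough that $\lambda:=Cr<1$ and that $\overline{B(0,r)}$ is compact and contained in the domain of the estimate, and set $U:=\exp(B(0,r))$; note that $U$ is symmetric, since $\exp(-X)=\exp(X)^{-1}$. Now let $\Gamma$ be discrete and generated by the symmetric set $S:=\Gamma\cap U$, which is finite as $\Gamma$ is discrete and $\overline{U}$ is compact. Define inductively $S_1:=S$ and $S_{k+1}:=\{[w,s]: w\in S_k,\ s\in S\}$, so that $S_k$ is exactly the set of left-normed commutators $[[\cdots[s_1,s_2],\dots],s_k]$ of weight $k$ in elements of $S$. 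A straightforward induction using the estimate shows that $S_k\subset U$ with $|w|\le \lambda^{k-1}r$ for every $w\in S_k$: if $w\in S_k$ and $s\in S$, then $|[w,s]|\le C|w||s|\le (Cr)\,|w|=\lambda|w|\le \lambda^{k}r<r$. Hence every element of $\bigcup_k S_k$ lies in $\overline{U}\cap\Gamma$, a finite set, while $|w|\to 0$ along the $S_k$; letting $\delta$ be the smallest norm of a nontrivial element of this finite set, any $k$ with $\lambda^{k-1}r<\delta$ forces $S_k=\{1\}$. Thus there is $K\in\N$ with $S_k=\{1\}$ for all $k\ge K$: every left-normed commutator of weight $\ge K$ in the generators $S$ is trivial.

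The final step converts this into genuine nilpotency of $\Gamma$. Writing $\Gamma=\langle S\rangle$ as a quotient of the free group on $S$ and invoking Hall's basis theorem (the term $\gamma_K(\Gamma)$ of the lower central series is generated by the images of the basic commutators of weight $\ge K$ in the generators), the vanishing of all weight-$\ge K$ left-normed commutators — which include all basic commutators of those weights — gives $\gamma_K(\Gamma)=\{1\}$. Therefore $\Gamma$ is nilpotent of class $<K$, and $U$ is the desired Zassenhaus neighborhood.

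The step I expect to be the main obstacle is the last one: the dynamical argument only controls \emph{iterated commutators of the generators}, whereas $\Gamma$ itself typically contains elements far from $1$, so one cannot directly run an induction on the lower central series of the whole group. Bridging this gap requires the purely group-theoretic input that $\gamma_K(\Gamma)$ is generated by weight-$\ge K$ basic commutators in any generating set; one must also be attentive, throughout the induction, that every iterated commutator remains inside the region $B(0,r_1)$ where the BCH estimate is valid, which is exactly what the contraction $\lambda<1$ guarantees.
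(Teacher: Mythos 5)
The paper offers no proof of this Fact: it is quoted directly from \cite[Theorem 4.1.6]{Thurston}, and your argument is essentially the standard proof of that theorem — a local Baker--Campbell--Hausdorff estimate $|[g,h]|\le C|g||h|$ turning the commutator map into a contraction on a small ball, so that iterated left-normed commutators of the generators shrink geometrically, while discreteness (plus compactness of $\overline{U}$) forces them to become trivial at some finite weight $K$. The analytic half of your write-up is correct as it stands, including the care you take that all iterated commutators stay in the region where the BCH estimate is valid.

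The one soft spot is precisely the step you flagged, and your repair of it is slightly off. The statement you attribute to Hall's basis theorem — that $\gamma_K(\Gamma)$ is generated by the images of the basic commutators of weight $\ge K$ in the generators — is, for a general group, only valid modulo deeper terms of the lower central series: one gets $\gamma_K = \langle \text{commutators of weight } K,\dots,N-1\rangle\cdot\gamma_N$ for every $N$, which yields stabilization of the series, not its vanishing; invoking the unqualified version is circular unless $\Gamma$ is already known to be nilpotent. The correct bridging lemma, which is exactly what your contraction argument feeds, is: if $G=\langle S\rangle$ and every left-normed commutator $[s_1,\dots,s_K]$ with $s_i\in S$ is trivial, then $\gamma_K(G)=1$. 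This has a short self-contained proof by induction on $K$: every left-normed commutator $w$ of weight $K-1$ in elements of $S$ satisfies $[w,s]=1$ for all $s\in S$, hence $w\in Z(G)$ since $S$ generates; passing to $G/Z(G)$, all left-normed commutators of weight $K-1$ in the images of $S$ vanish, so by the inductive hypothesis $\gamma_{K-1}(G)\subseteq Z(G)$, whence $\gamma_K(G)=[\gamma_{K-1}(G),G]=1$ (the base case $K=2$ is commutativity). With this substitution your proof is complete and coincides with the argument in the cited source.
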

In fact, \cite[Theorem 4.1.7]{Thurston} gives an even stronger statement: there exists a neighborhood $U$ of $1$ in $H$ such that any discrete subgroup $\Gamma$ generated by $U \cap \Gamma$ is a lattice in a  connected nilpotent subgroup of $H$. Here, we only need the weak version of this statement.

\begin{proof} 
If $\overline{Q} \simeq \R^2, \Z \times \R$, or $\Z^2$, there exists $g \in N_G(\Gamma)$ such that $\Ad_g \in \Aut(\heis)$ has all its eigenvalues with norm $< 1$. This is not possible by Corollary \ref{Corollary: sequence converging to 1}.

Assume now that $\overline{Q}  \simeq \R$. We will show that for any neighborhood $U$ of the identity in $\R_{\bf H} \times \R_{\bf L} \times K$, there exists $g \in N_G(\Gamma_0)$ such that $r(g) \in U$.
The projection $p(N_G(\Gamma_0))$  is dense in a $1$-dimensional subgroup  $D$ of $\R_{\bf H} \times \R_{\bf L}$, that has a non-trivial projection on $\R_{\bf H}$. Let $l: \R \to G$ be a one-parameter subgroup of $G$ that projects onto $D$. Then $r(N_G(\Gamma_0)) \subset l \ltimes K$. 
Consider the projection $q: l \ltimes K\to  l$, and let $F:=\overline {r(N_G(\Gamma_0))}$.  Since $K$ is compact,  $q(F)$ is closed in $l$, hence equal to  $\overline {p(N_G(\Gamma_0))}=l$. Therefore, the restriction of the projection $q$ to $F$ is a Lie group homomorphism $F \to l$, which is surjective. This defines a fiber bundle 
$ F\cap K \hookrightarrow F\to l$.
Consequently, the identity component $F^o$ also projects surjectively onto $l$.
Since $r(N_G(\Gamma_0)) \cap F^o$ is dense in $F^o$,  it intersects any neighborhood $U$ of $1 \in l \ltimes K$. 

Now, let $U_1$ be a neighborhood of $1$ in $\R_{\bf H} \times \R_{\bf L} \times K$, and let $U_2$ be a neighborhood of $1$ in $\Heis$, such that $U_1 \times U_2$ is a Zassenhauss neighborhood. Consider an element  $g=(a,x) \in N_G(\Gamma_0)$ with $a \in U_1$. Since $g$ normalizes $\Gamma_0$, we define the subgroup $$\tilde{\Gamma} = \langle g \rangle \ltimes \Gamma_0.$$ This is a discrete group generated by $g$ together with a finite set of generators of $\Gamma_0$. 
Thus, we can write a generating set for $\tilde{\Gamma}$ as $\{\gamma_i=(a_i,x_i)\}_{i=1}^m$, where $a_i \in U_1$ and $x_i\in\Heis$.
Let $A$ be a $K$-invariant complement of the center in $\Heis$. 
We consider the automorphism $\Psi$ of $G$ which is the identity on $\R_{\bf H} \times \R_{\bf L} \times K$, multiplication by $\lambda \in\R_{>0}$ on $A$ and multiplication by $\lambda^2$ on the center of $\Heis$. Choosing $\lambda$ sufficiently small ensures that $\Psi(x_i)\in U_2$ for all $i=1,\dots,m$. Consequently,  $\Psi(\tilde{\Gamma})=\langle (a_i,\Psi(x_i)),\ i=1,\dots,m\rangle$ is generated by elements of $U_1\times U_2$ and, by Lemma \ref{Lemma appendix},  is nilpotent. Hence,  $\tilde{\Gamma}$ is also nilpotent.

Now, assume that $Z \subset N_0$. Then, $a(N_0) \subset N_0$, and the restriction $a_{\vert N_0}$ is unimodular. 
Indeed, for any element $\gamma=(1,y) \in \Gamma_0$, we have 
$g \gamma g^{-1}=  x a(y) x^{-1} \in \Gamma_0$.    
Hence $a(y) \in \Gamma_0 Z \subset N_0$. Since $\Gamma_0$ generates $N_0$, we obtain $a(N_0) \subset N_0$. Furthermore, the adjoint action $\Ad_g=\Ad_x \circ \Ad_a$ preserves a lattice $\Gamma_0$ in $N_0$, so $\Ad_{g_{\vert N_0}}$ is unimodular. Since $\Ad_{x_{\vert N_0}}$ is unimodular, the claim follows.    

Suppose that $N_G(\Gamma_0) \not \subset \Isom(X)$. Then, $g=(a,x)$ projects non-trivially on $\R_{\bf H}$. By Lemma \ref{Lemma: Gamma does not intersect the center}, $\Gamma_0 \cap Z = \{1\}$, implying that $\Gamma_0$ is abelian. 
Let $\gamma \in \Gamma_0$, and define $\pi_1(\gamma):=[g,\gamma] \in \Gamma_0$. Define the sequence $\pi_k(\gamma):=[g,\pi_{k-1}(\gamma)]$. Since $\tilde{\Gamma}$ is nilpotent, there exists some $k \in \N$ such that for any $\gamma \in \Gamma_0$, $\pi_k(\gamma)=0$. On the other hand, we have  $\pi_1(\gamma)-(a-I)(\gamma) \in Z$, which yields $\pi_k(\gamma)-(a-I)^k \in Z$. Thus, $(a-I)^k(\Gamma_0) \subset Z$, and consequently, $(a-I)^k(N_0) \subset Z$. As a result, the restriction $a_{\vert N_0}$ has $1$ as an eigenvalue with multiplicity $\dim N_0 -1$. Moreover, for any $z \in Z$, we have $a(z)=\alpha z$, with $\alpha \in \R$. Since $a_{\vert N_0}$ is unimodular, it follows that $\alpha =1$, contradicting the fact that $g$ projects non-trivially on $\R_{\bf H}$. Therefore, $a$, and hence $Q$, has a trivial projection on $\R_{\bf H}$. 
\end{proof}

\begin{proof}[Proof of Proposition \ref{Proposition: N_G(Gamma)}]
The first statement follows from Observation \ref{observation: Gamma contained in Isom(X)}. 
Now, assume that $\Gamma \not \subset \Isom(X)$. Then, by Theorem \ref{Theorem 1}, we have, up to finite index, $\Gamma = \langle \hat{\gamma} \rangle \ltimes \Gamma_0$,  where $\Gamma_0 \subset \Heis$ is abelian, and $p_{\bf H}(\hat{\gamma})\neq 1$. Since $\Gamma_0$ is normal in $\Gamma$, we have $N_G(\Gamma) \subset N_G(\Gamma_0)$. Hence, by Lemma \ref{lemma: description of projection of N_G(Gamma)}, $\overline{p(N_G(\Gamma)}$ is either trivial, or isomorphic to $\Z$, or to $\R$. 
Since $\Gamma$ has a non-trivial projection on $\R_{\bf H}$, the same holds for  $p(N_G(\Gamma)$, ruling out the trivial case. Moreover, by Lemma \ref{lemma: description of projection of N_G(Gamma)}, this also excludes the possibility that $\overline{p(N_G(\Gamma)}$ is  isomorphic to $\R$. We conclude that the projection must be a cyclic group.   
\end{proof}

\subsection{Proof of Theorem \ref{Theorem: non-essentiality}}

We are now able to prove Theorem \ref{Theorem: non-essentiality}. We denote by $g$ the plane wave metric on $X$. We start with the following lemma:

\begin{lemma}\label{Lemma: construction of conformal metric}
Let $\gamma \in G$ be an element with a non-trivial projection on $\R_{\bf L}$. Then, there exists a function  $f \in C^\infty(X,\R)$ such that
\begin{enumerate}
\item $f$ is $\F$-constant. In particular, any $\phi \in K \ltimes \Heis$ acts isometrically on the conformally rescaled metric $e^f g$. 
    \item $\gamma$ is an isometry of  $e^f g$.
\end{enumerate}
\end{lemma}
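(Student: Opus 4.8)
The plan is to absorb the constant conformal distortion of $\gamma$ into a linear function of the transverse leaf-space parameter $u$, using that $\gamma$ moves $u$ by a nonzero translation. This is the standard trick of trivializing a homothety by a transverse translation.

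First I would record the two structural inputs. Since $\gamma\in G\subset\widehat{G}$ and the conformal group consists of similarities, there is a constant $\lambda>0$ with $\gamma^*g=\lambda g$. Secondly, recall that $G$ preserves the translation structure on the leaf space $\xi=X/\F\simeq\R$, whose global affine parameter $u$ satisfies $\omega=du$. The subgroups $\R_{\bf H}$, $K$ and $\Heis$ preserve every leaf of $\F$ (equivalently, they preserve $u$), so the $G$-action on $\xi$ factors through $p_{\bf L}$ and induces an isomorphism from $\R_{\bf L}$ onto the translation group of $\xi$. Consequently $\gamma$ induces on $\xi$ the translation $u\mapsto u+\tau$, with $\tau\neq0$ precisely because $p_{\bf L}(\gamma)\neq1$; equivalently, $u\circ\gamma=u+\tau$ on $X$ with $\tau\neq0$.

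I would then set $f:=-\tfrac{\ln\lambda}{\tau}\,u\in C^\infty(X,\R)$, which is smooth and depends only on $u$, hence is $\F$-constant, giving (1). For the ``in particular'' assertion, any $\phi\in K\ltimes\Heis$ preserves $u$ and is an isometry of $g$, so $f\circ\phi=f$ and $\phi^*(e^fg)=e^{f\circ\phi}\phi^*g=e^fg$. For (2) I would compute
$$\gamma^*(e^fg)=e^{f\circ\gamma}\,\gamma^*g=e^{f(u+\tau)}\lambda\,g=e^{f(u)-\ln\lambda}\lambda\,g=e^fg,$$
using $f(u+\tau)-f(u)=-\ln\lambda$; thus $\gamma$ is an isometry of $e^fg$. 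The degenerate case $p_{\bf H}(\gamma)=1$ gives $\lambda=1$ and is handled by $f\equiv0$.

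The only place where geometry genuinely enters, and the step I would treat most carefully, is the reduction to the scalar functional equation $f(u+\tau)=f(u)-\ln\lambda$. This rests on two points: that $\gamma^*g$ differs from $g$ by a true constant (so the obstruction is a single number), and that $\gamma$ acts on $\xi$ by a nonzero translation (so this number can be distributed linearly along $u$). Both are immediate consequences of the structure of $G$ recalled above — the first from $G$ consisting of similarities, the second from the fact that only the $\R_{\bf L}$-factor acts nontrivially on $\xi$. Once these are in place the existence of $f$ is immediate and no analytic difficulty remains.
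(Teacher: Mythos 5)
Your proof is correct and takes essentially the same route as the paper: both reduce the statement to the one--variable functional equation $\overline f\circ\tau_\tau=\overline f-\ln\lambda$ on the leaf space $\xi\simeq\R$, using exactly the two inputs you isolate, namely that $\gamma^*g=\lambda g$ for a constant $\lambda>0$ and that $\gamma$ induces a nonzero translation $u\mapsto u+\tau$ on $\xi$ because $p_{\bf L}(\gamma)\neq 1$. The only difference is at the last step: you exhibit the explicit linear solution $\overline f(u)=-\tfrac{\ln\lambda}{\tau}\,u$, while the paper produces a general smooth solution by interpolating on a fundamental domain of the translation and propagating by the functional equation; your choice is a valid (and simpler) special case of that construction.
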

\begin{proof}
If $\gamma \in \Isom(X)$, set $f \equiv 0$. Assume now that $\gamma$ acts essentially on $X$, i.e. its projection on $\R_{\bf H}$ is non-trivial. Write $\gamma^* g = e^\alpha g$, $\alpha \in \R \smallsetminus \{0\}$. One checks that $\gamma$ is isometric for $e^f g$ if and only if 
\begin{equation}\label{Eq f}
f \circ \gamma = f - \alpha.    
\end{equation}
The space of $\F$-leaves  is identified with $\xi=\R$, endowed with its $G$-invariant translation structure.   Let $u \in C^\infty(X,\R)$ be a ($G$-invariant) affine parameter on it. The invariance under  $\gamma$ implies $u \circ \gamma = u + b$, for some $b \in \R$.  
Since $\gamma$ has a non-trivial projection on $\R_{\bf L}$, we conclude that $b \in \R \smallsetminus \{0\}$. 
Denote by $\tau_b: u \mapsto u+b$ the (non-trivial) translation in the affine parameter induced by $\gamma$. 
Let $\overline{f} \in C^\infty(\R,\R)$, and set $f := \overline{f} \circ u$. Then $f$ satisfies  (\ref{Eq f}) if and only if 
\begin{equation}\label{Eq bar(f)}
\overline{f} \circ \tau_b = \overline{f} - \alpha.      
\end{equation}
Define a function $\overline{f} \in C^\infty(\R,\R)$ as follows:
\begin{itemize}
    \item Take any smooth function $\Phi_0$ on $[0,\epsilon[$, where $0<\epsilon <b$. Set $$\Phi_1 := \Phi_0 \circ \tau_b^{-1} - \alpha$$ on $[b, b + \epsilon[$, and let $\overline{f}_0$ be a smooth function on $I_0=[0, b]$ interpolating between $\Phi_0$ and $\Phi_1$. 
    \item Extend $\overline{f}$ to $I_{k}=[b+k-1,b+k[$, $k \geq 1$,  by $$\overline{f}_{\vert I_{k}} := \overline{f}_0 \circ \tau_b^{-k} -k \alpha .$$ 
\end{itemize}
Clearly, $\overline{f}$ is a well-defined smooth function on $\R$, satisfying (\ref{Eq bar(f)}). Thus, the function $f$ defined by $f = \overline{f} \circ u$ is a smooth function on $X$, that satisfies the required conditions (1) and (2). 
\end{proof}

\begin{proof}[Proof of Theorem \ref{Theorem: non-essentiality}]
When $\Gamma$ is contained in the isometry group of $X$, the conformal group of the compact quotient $M:=\Gamma \backslash X$ is non-essential by Observation $\ref{observation: Gamma contained in Isom(X)}$. 
Now, assume that $\Gamma$ is not contained in the isometry group of $X$. 
By Lemma \ref{Lemma: normalizer contained in G}, the normalizer of $\Gamma$ in $\widehat{G}$ is contained in $G$. So it is sufficient to consider $N_G(\Gamma)$. By Proposition \ref{Proposition: N_G(Gamma)}, 
we have $p(N_G(\Gamma)) \simeq \Z$. So,  $N_G(\Gamma) = \langle h \rangle \ltimes N_{K \ltimes \Heis}(\Gamma)$, where $h$ projects non-trivially on both $\R_{\bf L}$ and $\R_{\bf H}$. 
By  Lemma \ref{Lemma: construction of conformal metric}, there exists a function   $f \in C^\infty(X,\R)$ such that $h$ is an isometry for the metric $e^f g$. Moreover, since $N_{K \ltimes \Heis}(\Gamma)$ is contained   in $K \ltimes \Heis$, it also preserves $e^f g$. Thus, $N_G(\Gamma)$ preserves the Lorentzian metric  $e^f g$, which is then well defined on the quotient. Consequently, the conformal group of  $\Gamma \backslash X$ preserves the Lorentzian metric induced by $e^f g$, proving that the conformal group of  $\Gamma \backslash X$ is non essential. 
\end{proof}

\begin{proof}[Proof of Corollary \ref{Corollary: Non essentiality in the homog case}]
If a compact quotient of $X=G/I$ is essential, then, by Remark \ref{Remark: replace by a cover}, the universal cover $X=G/I$ is also essential. If it is not conformally flat, then by \cite{Alekseevsky2024}, it is conformal to a homogeneous plane wave.  The conclusion then follows from Theorem \ref{Theorem: non-essentiality}.  
\end{proof}

\bibliographystyle{abbrv}
\bibliography{Bibliographie.bib}\blfootnote{
The  author is  supported by the grants, PID2020-116126GB-I00\\
(MCIN/ AEI/10.13039/501100011033), and the framework IMAG/ Maria de Maeztu,\\ 
CEX2020-001105-MCIN/ AEI/ 10.13039/501100011033.\medskip}

\end{document}